\newcommand{\cC}{\mathcal{C}}
\newcommand{\cF}{\mathcal{F}}
\newcommand{\cM}{\mathcal{M}}
\newcommand{\EE}{\mathbbm{E}}
\newcommand{\NN}{\mathbbm{N}}
\newcommand{\PP}{\mathbbm{P}}
\newcommand{\RR}{\mathbbm{R}}
\newcommand{\XX}{\mathbbm{X}}
\tikzset{
	xi/.style={circle,fill=blue!10,draw=black,inner sep=0pt,minimum size=1.2mm},
	xib/.style={circle,fill=blue!10,draw=black,inner sep=0pt,minimum size=1.6mm},
	not/.style={circle,fill=black,draw=black,inner sep=0pt,minimum size=0.5mm},
	>=stealth,
	}
\def\DeclareSymbol#1#2#3{\expandafter\gdef\csname MH@symb@#1\endcsname{\tikz[baseline=#2,scale=0.15]{#3}}}
\def\<#1>{\csname MH@symb@#1\endcsname}
\theoremstyle{plain}
\newtheorem{theorem}{Theorem}[section]
\newtheorem*{theorem*}{Theorem} 
\newtheorem{corollary}[theorem]{Corollary}
\newtheorem{hypothesis}[theorem]{Hypothesis}
\newtheorem{lemma}[theorem]{Lemma}
\newtheorem{proposition}[theorem]{Proposition}
\theoremstyle{definition}
\newtheorem{remark}[theorem]{Remark}
\newtheorem{definition}[theorem]{Definition}
\newcommand{\eps}{\varepsilon}
\newcommand{\dd}{\mathop{}\!\mathrm{d}}
\newcommand{\norm}[1]{\left\lVert #1\right\rVert}
\newcommand{\abs}[1]{\left\lvert #1\right\rvert}
\newcommand{\ind}{\mathbbm{1}}	
\newcommand{\C}{C_0}
\renewcommand{\EE}[1]{\mathbbm{E}\left[#1\right]}
\let\div\relax
\DeclareMathOperator{\div}{div}
\author{R\'emi Catellier} 
\address{Unviersit\'e Côte d'Azur, CNRS, LJAD}
\email{remi.catellier@unice.fr}
\author{Yves D'Angelo} 
\address{Unviersit\'e Côte d'Azur, INRIA, CNRS, LJAD}
\email{yves.dangelo@unice.fr}
\author{Cristiano Ricci}
\address{Scuola Normale Superiore, Pisa}
\email{cristiano.ricci@sns.it}
\date{\today}
\title{A mean-field approach to self-interacting network, convergence and regularity}
\begin{document}
\begin{abstract}
The propagation of chaos property for a system of interacting particles, describing the spatial evolution of a network of interacting filaments is studied. The creation of a network of mycelium  is analyzed as representative case, and the generality of the modeling choices are discussed. Convergence of the empirical density for the particle system to its mean field limit is proved, and a result of regularity for the solution is presented. 
\end{abstract}
\maketitle
\tableofcontents

\section{Introduction}
\label{sec:introduction}
The study of complex networks has seen a growing interest in the last few years. The nature of such networks is not uniquely defined: some examples are informational networks, (of relation between individuals, citation graphs,...), technological (power grids, public transportation, computer network,...), or biological  (vascular, biochemical, neural network,...) \cite{barratDynamicalProcessesComplex2008, newmanStructureFunctionComplex2003, albertStatisticalMechanicsComplex2002, vespignaniModellingDynamicalProcesses2012}. In all the above mentioned phenomena, transformations arise from individuals: beginning by the development of a new connection between existing entities, as it often appears in neurons, or the introduction of a new individual into the system. All these contributions sum up to the evolution of the network as considers as a unit at the macroscopic level.
The mathematics of such intricate processes, which needs to be able to capture evolution at different scales, can be achieved by linking microscopic objects, which describe individuals, with their collective mean behavior. 

Let us present the model which is takled here, with its different features. We will then link it to a specific biological framework that we have in mind \cite{dikecHyphalNetworkWhole2020}.
Here we focus on the case of the spatial evolution of a complex biological network, which evolves by means of the motions of its nodes. We consider a system of second order SDEs
\begin{equation}\label{eq:sdeintro}
\begin{cases}
\dd X^{i,N}_t = V^{i,N}_t \dd t\\
\dd V^{i,N}_t = - \lambda V^{i,N}_t \dd t + \nabla C^N(t,X^{i,N}_t) \dd t + \sigma \dd B^i_t
\end{cases}t \in [T^{i,N},\Theta^{i,N}),
\end{equation}
where $(X^{i,N}_t,V^{i,N}_t)\in \RR^{d} \times \RR^{d}$ is the position of the nodes of the network on the phase space. The processes $(B^{i}_{t})_{t\geq 0 }$ are independent Brownian motions, which affect the dynamics of the nodes at the microscopic level. Here $C^N$ represent an expandable resource  for  development, taking into account the cost the network has to pay to expand towards a specific direction. 
The motion of the particles is driven by the term  $\nabla C^{N}$ and   which couples the equations  (and reflects the fact  that particles tend to move towards more available resources) and by the term $- \lambda V^{i,N}_t$ which represents the friction. 
Each  equations in \eqref{eq:sdeintro}, describing position and velocity of a node, is valid only for a limited life span $[T^{i,N},\Theta^{i,N})$, denoting the time interval where the node can contribute to the evolution of the network. Outside  this interval the particle is at rest. 

We thus consider the network skeleton made of all the trajectories of the spatial components of the nodes, i.e. 
\[
\mathcal{N}_{t} := \bigcup_{i=1}^{N^{N}_{t}} \{ X^{i,N}_{s}\,|\, s \leq t,\, s \in [T^{i,N},\Theta^{i,N})\}\subseteq \RR^{d}.
\]

\noindent We allow the number of living nodes in the network to increase  after a bifurcation event, or to decrease 
when the node ceases to exist. 
Creation and destruction of particles are provided according to a Poisson point process: modifications in the number of particles at time $t$ are affected by the configuration of the system at all times $s \leq t$. Since the number of individuals is changing in time, we introduce the total number of particles $N^N_t$ that are alive or appeared up to time $t$. We also introduce the empirical measure of the particle system \eqref{eq:sdeintro}
\[
S^{N}_{t} = \frac{1}{N}\sum_{i=1}^{N^{N}_{t}} \ind_{[T^{i,N},\Theta^{i,N})}(t)\delta_{(X^{i,N}_{t},V^{i,N}_{t})}
\]
which is a (random) finite positive measure.
We allow a branching event to appear at time $t$, in any point on the trajectory of the particles $X^{i,N}_{s}$  for $s \leq t$, with a uniform spatial distribution on the trajectory. The symbol $\delta_{\XX^N_t}$ denotes  the uniform measure on the trajectory of the particles up to time $t$:
\begin{equation}\label{eq:delta_XXintro}
\delta_{\XX^{N}_{t}}  =  \frac1N\int_{0}^{t} \sum_{i=1}^{N^N_s} \ind_{[T^{i,N},\Theta^{i,N})}(s)\big|V^{i,N}_s\big| \delta_{X^{i,N}_s}(\dd x) \dd s.
\end{equation}
Note the presence of the term $|V^{i,N}_{t}|$ in the measure $\delta_{\XX^{N}_{t}}$: the scaling by the velocity of each particle is of crucial importance in order to obtain the uniform measure on the trajectory (see also appendix \ref{appendix:curvinlinear}).
The possibility to handle the uniform measure on the particles path, as well as the coalescence events, was opened by the choice of Langevin dynamics thanks to the higher regularity of the spatial trajectory. 
In the same spirit as branching, when a moving node hits the network structure  (i.e. they superimpose) coalescence occurs: the node stops moving and a loop is created within the network.

The function $C^{N}$ is itself influenced by particles, leading to a coupled (random) PDE-SDEs system
\begin{equation}\label{eq:introCN}
\partial_t C^N = \frac{\sigma_{C}^2}{2} \Delta C^N - (K_C*\delta_{\XX^N_t} )C^{N}.
\end{equation}
The rationale behind the above equation \eqref{eq:delta_XXintro} is the following: to contribute to the development of the network, the particles need to consume some resources. This absorption mechanism affects not only the sites where the network is expanding, i.e. corresponding to the particles position, but is present along the entire trajectory.

 Having in mind a biological framework, we can imagine that the network absorbs nutrients in order to sustain itself, along its entire length. Moreover, if we want to consider the network as a solid structure, we cannot use directly the uniform measure $\delta_{\XX^{N}_{t}}$, since the trajectories of the particles are one-dimensional objects and hence are negligible for the $d$-dimensional Lebesgue measure. For this reason, we introduce the convolution kernel $K_{C}$ into equation \eqref{eq:introCN}. 
The drift $\nabla C^{N}(t,X^{i,N}_{t})$ in the particles equations encodes a very important feature of our model: the self avoidance, i.e. the fact that particles tend to avoid visiting sites which are close to their past trajectories. The values of $C^{N}$ decrease in correspondence of the network structure $\mathcal{N}_{t}$, due to the term $- (K_C*\delta_{\XX^N_t} )C^{N}$ in \eqref{eq:introCN}. Hence $\nabla C^{N}(t,x)$ is pointing towards the areas of the space which are free, i.e. those which are far from $\mathcal{N}_{t}$.

\noindent  We are interested in the mean behavior of system of equation \eqref{eq:sdeintro}-\eqref{eq:introCN} when $N$ is large, and on the propagation of chaos property:

\begin{theorem*}[Theorem \ref{theorem:main}]
As $N$ goes to infinity the couple $(S^{N},C^{N})$ converges  to the  unique measure solution of the following system of PDEs
\begin{equation}\label{eq:introPDE}
\partial_t u + v \cdot \nabla_x u - \lambda \div_v( v u ) = \frac{\sigma^2}{2} \Delta_v u - \nabla C \cdot \nabla_v u+  G(v) \overline{u} + G(v)(K* \rho) - (K*\rho) u, 
\end{equation}
\[
\partial_t \rho(t,x) = \int_{\RR^d}|v| u(t,x,v)\dd v,\quad \overline{u}(t,x) = \int_{\RR^d} u(t,x,v) \dd v,
\]
\begin{equation}\label{eq:introClimit}
\partial_t C = \frac{\sigma_C^2}{2} \Delta C - (K_C*\rho) C.
\end{equation}

\end{theorem*}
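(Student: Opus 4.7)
The plan is to follow the classical three-step mean-field strategy (martingale formulation, tightness, identification and uniqueness of limit points), but adapted to the non-standard features of the system: a time-varying population through birth/death, a mass supported on past trajectories weighted by speed, and a coupled linear PDE with a random singular potential.

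First, I would write the semi-martingale decomposition of $\langle S^N_t,\phi\rangle$ for $\phi\in C^2_b(\RR^{2d})$. Itô's formula applied on each life span $[T^{i,N},\Theta^{i,N})$ yields the Vlasov--Fokker--Planck generator plus a martingale $M^{N,\phi}_t$ coming from the Brownian motions, and the Poisson creation/destruction contributes a jump martingale whose compensator involves $G(v)\overline{S^{N}_{t}}$ (a new particle born with law encoded by $G$) and the drift of the network against the $(K_C\ast\delta_{\XX^N})\,C^N$ coupling. Identical bookkeeping for the measure $\delta_{\XX^N_t}$ in \eqref{eq:delta_XXintro} gives an equation for $\rho^N_t := \int |v|\,S^N_t(\dd v)$-style quantity. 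On the resource side, $C^N$ solves a linear parabolic equation \eqref{eq:introCN} whose potential is the random measure $K_C\ast\delta_{\XX^N}$; standard parabolic estimates (maximum principle, energy estimates, possibly Feynman--Kac) provide uniform-in-$N$ bounds on $C^N$ in a suitable function space such as $L^\infty([0,T];C^1_b)$ assuming $K_C$ is smooth enough.

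Second, I would prove tightness of $(S^N, C^N, \delta_{\XX^N})$ in $D([0,T];\mathcal{M}_+)\times C([0,T];C^1_b) \times C([0,T];\mathcal{M}_+)$. The key moment estimates are: (i) a uniform-in-$N$ bound on $\mathbb{E}[N^N_T/N]$ via Grönwall on the intensity $G(v)\overline{S^N_t}$ and the death rate; (ii) uniform bounds on $\mathbb{E}\sup_{t\le T}\langle S^N_t, 1+|v|^p\rangle$ for some $p>1$, using $\lambda v$ friction and the bound on $\nabla C^N$; (iii) Aldous' criterion applied to $\langle S^N_t,\phi\rangle$ using (i)-(ii) and bounds on the jump intensities. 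Tightness of $C^N$ follows from parabolic Schauder/Aronson-type estimates.

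Third, identification. Along a subsequence $(S^N, C^N, \delta_{\XX^N})\to (u, C, \rho)$ with $\rho$ to be identified as $|v|\,u$ integrated in $v$. Passing to the limit in the martingale formulation: the linear terms are routine; the non-trivial ones are the drift $\nabla C^N\cdot \nabla_v$ (requires joint convergence $S^N\otimes\nabla C^N\to u\otimes\nabla C$, handled by uniform continuity of $\nabla C^N$ in $x$), the reaction $(K\ast \rho^N)S^N$ and the source $G(v)(K\ast\rho^N)$ (require continuity of $K\ast\cdot$ for weak convergence of measures, which holds if $K$ is bounded continuous), and the branching term $G(v)\overline{S^N}$ which is linear. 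The martingale brackets vanish in $N$ of order $1/N$, yielding that any limit point satisfies \eqref{eq:introPDE}--\eqref{eq:introClimit} in the weak sense. Convergence of $C^N\to C$ is obtained by writing the mild (Duhamel) formulation of \eqref{eq:introCN} and using stability of the heat semigroup against the weak convergence of $K_C\ast\delta_{\XX^N}$.

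Finally, uniqueness of the limit PDE system gives convergence of the whole sequence. The main obstacle, I expect, is precisely the compatibility between the velocity-weighted trajectory measure $\delta_{\XX^N}$ and its limit $\rho = \int|v|u\,\dd v$: the identity $\delta_{\XX^N_t}\approx \int_0^t \int|v|S^N_s(\cdot,\dd v)\,\dd s$ must be preserved under weak limits, which demands enough regularity in $t$ and absence of mass concentration on lower-dimensional sets. Uniqueness itself is the other hard point: \eqref{eq:introPDE}-\eqref{eq:introClimit} is a nonlinear kinetic system where $\nabla C$ depends on the solution through a nonlocal, nonlinear potential; I would prove it by a fixed-point/contraction argument in a Wasserstein-type distance for $u$ combined with $L^\infty$ stability of $C$ against $\rho$, exploiting the regularity result for $C$ stated elsewhere in the paper to close the estimate.
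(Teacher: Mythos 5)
Your high-level architecture (martingale formulation, tightness via Aldous, identification, uniqueness closing the argument) matches the paper's, and your tightness sketch correctly isolates the bound on $\EE{N^N_T/N}$ as the crucial moment estimate. However, there are two points where your proposal glosses over the actual difficulty, and the second is a genuine gap.

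First, the a priori bound on $\nabla C^N$. You write that ``standard parabolic estimates (maximum principle, energy estimates, possibly Feynman--Kac) provide uniform-in-$N$ bounds on $C^N$ in $L^\infty([0,T];C^1_b)$ assuming $K_C$ is smooth enough.'' This is not available as stated, because the argument is circular: to bound $\nabla C^N$ you need to control the potential $K_C * \delta_{\XX^N}$; that requires control on $\delta_{\XX^N}$, which carries the speed weight $|V^{i,N}_s|$; and $V^{i,N}$ is driven by $\nabla C^N$ itself. The paper escapes this loop only because of the structural hypothesis $|\nabla K_C|\lesssim K_C$. Under that hypothesis the Feynman--Kac representation gives $|\nabla e^{-\int_0^t K_C*\delta_{\XX^N}}|\le 1$ pointwise, \emph{independent} of the size of $K_C*\delta_{\XX^N}$, so the $C^1_b$-bound on $C^N$ is free (Lemma \ref{lemmma:gradientC^N}), which then controls $V^{i,N}$ (Lemma \ref{lemma:estimatesOnV}), which then controls $N^N_T/N$ and $D^2 C^N$. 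Any proposal that relies on generic parabolic regularity without this decoupling device breaks down.

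Second, uniqueness. Your plan is a fixed-point/contraction argument in a Wasserstein-type distance for $u$ combined with $L^\infty$ stability for $C$. This is a genuinely different route from the paper, which reformulates the system in Fourier variables $(k,\xi)$, exhibits a semi-explicit mild solution along the hypoelliptic characteristics $\xi(t)=(\xi-k/\lambda)e^{-\lambda t}+k/\lambda$, and proves a Gr\"onwall estimate in the norm $\sup_k \|\hat u(t,k,\cdot)\|_{\mathcal C^{1+\beta}_b}$. The reason the paper does not use Wasserstein is precisely the weight $|v|$ in $\partial_t\rho = \int|v|\,u\,\dd v$: this is an unbounded first-moment functional, not controlled by $W_1$ or total variation, so the map $u\mapsto\rho\mapsto K*\rho\mapsto\nabla C$ does not close in any standard transport metric. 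The Fourier picture resolves this because $\int|v| u(t,\dd x,\dd v)$ becomes the half-Laplacian $(-\Delta_\xi)^{1/2}\hat u(t,k,0)$, which is controlled by the $\mathcal C^{1+\beta}$ norm in $\xi$ (Lemma \ref{lemma:FractionalLaplacian}), and the $(1+\bar\beta)$-moment hypothesis guarantees this norm is finite. Unless you can produce a weighted-Wasserstein or moment-tracked variant that genuinely absorbs the $|v|$ weight, the contraction you propose will not close.

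The remaining components of your sketch (jump martingales of order $1/N$, tightness of $\delta_{\XX^N}$ separately, passage to the limit in the coupling term $\nabla C^N\cdot\nabla_v$ via uniform convergence of $\nabla C^N$) are consistent with the paper's treatment; the last point is also why the paper upgrades the $C^1_b$ bound on $C^N$ to a $C^2_b$ bound (Corollary \ref{corollary:D2C^N}), which gives equicontinuity of $\nabla C^N$ — you should say this explicitly rather than invoking ``uniform continuity of $\nabla C^N$'' without source.
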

\noindent The strong coupling between all the elements,
especially the interaction with the past configuration, was the main issue when dealing with a priori estimates, see section \ref{subsect:difficulties}. We also refer to equation Subsection \ref{sec:itoformula} and Equation \eqref{eq:PDE-system} to a more detailed explanation of each terms of the previous limiting equation, and especially the meaning of each.

The literature of interacting diffusion is very extensive, starting from the earlier results \cite{mckean1967propagation, oelschlager1984martingale, oelschlager1989derivation, sznitman1991topics}.
Many works, mostly applied to population dynamics or more generally to biology, are devoted to the interplay between different species, and to the discontinuity arising from creation or destruction of individuals. The spatial component of such a discontinuity plays a major role in the progression of the system, and has been widely studied, \cite{champagnat2006unifying, champagnat2007invasion, rieumont1991, nappo1988}. 

Because of its different features, for the analysis of a network organization requires to combine all the effects of the existing connections with the evolution of the individuals: in some cases the connections can also assume a physical meaning, affecting the structural transformation. In \cite{benaim2002, benaim2011, durrett1992} the case of self-interaction is analyzed, considering the interaction of a stochastic process $(X_t)_{t\geq 0}$ with its own trajectories where $s \leq t$. More precisely, in \cite{durrett1992}, self avoidance, which is also a key feature of our work, is treated.

We are also interested in a regularity result, showing that measure solutions of \eqref{eq:introPDE} are actually regular function solutions. 
\begin{theorem*}[Theorem \ref{theorem:regularity}]
If $u$ is a measure solution of equation \eqref{eq:introPDE} $u \in C([0,T];\cM_f^+(\RR^d\times \RR^d))$  then, for all $t_0 > 0$ $u$ lies in $C([t_0,T];C^\infty_b(\RR^d\times \RR^d))$.
\end{theorem*}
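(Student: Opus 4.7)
The strategy is hypoelliptic regularization. Combined with the smooth drift $\nabla C\cdot\nabla_v u$, the left-hand side of \eqref{eq:introPDE} is a Kolmogorov--Langevin operator
\[
L \;=\; \partial_t + v\cdot\nabla_x - \lambda\div_v(v\,\cdot\,) + \nabla C\cdot\nabla_v - \tfrac{\sigma^2}{2}\Delta_v,
\]
whose vertical diffusion together with the transport $v\cdot\nabla_x$ satisfies H\"ormander's bracket condition via $[\partial_{v_i},v_j\partial_{x_j}]=\partial_{x_i}$. Its pure-transport/diffusion part has an explicit Gaussian fundamental solution $p_t$ of Kolmogorov degenerate type, satisfying the hypoelliptic scaling bounds $\|\nabla_x^{\beta}\nabla_v^{\alpha}p_t\|_{L^1_{x,v}}\lesssim t^{-|\alpha|/2-3|\beta|/2}$ and $\|p_t\|_{L^\infty_{x,v}}\lesssim t^{-2d}$, so that $p_t\ast\mu$ is $C^\infty_b$ in $(x,v)$ for any finite measure $\mu$ and every $t>0$.

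First I would establish uniform $v$-moment bounds: testing the equation against $|v|^{2p}$ and using the friction $-\lambda\div_v(vu)$ to generate the dissipation $-2p\lambda\int|v|^{2p}u$, Gronwall gives $\sup_{t\in[0,T]}\int(1+|v|^{2p})u(t)\dd x\,\dd v<\infty$ for every $p$. In particular $\rho(t,x)=\int|v|u\,\dd v$ is a uniformly finite measure in $x$, so standard parabolic regularity applied to \eqref{eq:introClimit} yields $C\in C^\infty_b((t_0/4,T]\times\RR^d)$. Next, I would upgrade $u$ from a measure to a locally bounded function via a short Duhamel step,
\begin{equation*}
u(t) \;=\; p_{t-t_0/4}\ast u(t_0/4)\;+\;\int_{t_0/4}^{t} p_{t-\tau}\ast \mathcal{F}[u,C](\tau)\,\dd\tau,
\end{equation*}
with $\mathcal{F}[u,C]=-\nabla C\cdot\nabla_v u+G(v)\overline u+G(v)(K\ast\rho)-(K\ast\rho)u$. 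The only term carrying a derivative of $u$ is rewritten as $\nabla C\cdot\nabla_v u=\div_v(\nabla C\, u)$ (legal since $\nabla C$ is $v$-independent), and the $\nabla_v$ is transferred onto the kernel at the integrable cost $(t-\tau)^{-1/2}$. Combining this with $\|p_t\|_{L^\infty}\lesssim t^{-2d}$ and the moment bounds yields $u\in L^\infty_{\mathrm{loc}}((t_0/2,T];L^\infty_{x,v})$.

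Once $u$ is a locally bounded measurable function and all coefficients are smooth, iterating the Duhamel identity (distributing $\nabla_v$-derivatives at cost $(t-\tau)^{-1/2}$, and exploiting the commutator $[\partial_{v_i},v_j\partial_{x_j}]=\partial_{x_i}$ to trade $x$-derivatives for successive $v$-derivatives via the hypoelliptic bootstrap) upgrades $u$ to $u\in C^k_b$ on $[t_0,T]$ for every $k\geq 0$; equivalently one may view the equation as a linear H\"ormander-hypoelliptic PDE $Lu=\mathcal{F}[u,C]$ and apply sum-of-squares regularity directly, since the right-hand side is now controlled in every Sobolev space after finitely many bootstrap steps. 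The main obstacle is the initial measure-to-function upgrade: it is precisely the $v$-independence of $\nabla C$ that converts $\nabla C\cdot\nabla_v u$ into a divergence and allows the $\nabla_v$ to be absorbed onto the semigroup with the \emph{integrable} singularity $(t-\tau)^{-1/2}$ rather than the non-integrable $(t-\tau)^{-3/2}$ that an $x$-derivative would produce; without this structural point, measure-valued data would not be compatible with the Duhamel loop and the whole bootstrap would fail to close.
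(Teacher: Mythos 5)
Your approach is genuinely different from the paper's: the paper works entirely in Fourier variables, using the mild formulation \eqref{eq:MildFourier} together with the hypoelliptic estimates of Proposition \ref{prop:hypo} to run an induction that gains a factor $(1+|k|^2+|\xi|^2)^{1/6}$ of polynomial decay of $\hat u(t,k,\xi)$ per iteration, whereas you work in physical space with the Langevin fundamental solution and a Duhamel bootstrap. Both routes are in the Desvillettes--Villani spirit, but the paper's Fourier side-stepping avoids ever having to upgrade a measure to $L^\infty$.

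There is, however, a genuine mismatch with the stated hypotheses. Your first step tests the equation against $|v|^{2p}$ and therefore needs $\int G(v)|v|^{2p}\,\dd v<\infty$ for all $p$. The paper never assumes this: Hypothesis \ref{hypo:PS} only gives $(1+\overline\beta)$ moments for a fixed $\overline\beta>0$, and Hypothesis \ref{hypo:smoothness} asks that $\hat G$ decay faster than any polynomial, which is the Fourier-\emph{dual} condition (smoothness of $G$, not decay of its tail). These are independent -- $G\propto(1+|v|^2)^{-N}$ for large $N$ has rapidly decaying $\hat G$ yet only finitely many moments -- so under the paper's assumptions your moment bound and everything it feeds do not go through as written. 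A second, softer gap is the $x$-regularity loop: you correctly note that $\nabla_v$ can be absorbed at cost $(t-\tau)^{-1/2}$ while $\nabla_x$ costs the non-integrable $(t-\tau)^{-3/2}$, but the proposed fix via the commutator $[\partial_{v_i},v_j\partial_{x_j}]=\partial_{x_i}$ is only gestured at. Making that trade rigorous (tracking the anisotropic Gaussian scaling of the Kolmogorov kernel, or running a hypocoercive commutator hierarchy) is exactly what Proposition \ref{prop:hypo} encodes on the Fourier side, so it is not a detail one can leave implicit.
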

Following \cite{desvillettes2001} we proved this result under some minor additional assumptions with respect to our convergence result, see Section \ref{sec:presentation} for the detailed hypothesis and proof. 

\subsection{Difficulties}\label{subsect:difficulties}
In this subsection we aim at highlighting the main difficulties we met in proving the convergence result from the discrete to the continuous model and the regularity of solutions. 
The first problem we had to solve is the convergence of the empirical measure $S^{N}_{t}$. In fact, since the total number of living particles changes over time, $S^{N}$
 a probability measure but only a finite positive measure. A proper tightness criterion in the space of finite positive measure is thus required, \cite{kipnis2013scaling}. 
In order to prove tightness an a priori bound on the total mass, i.e. on the ratio 
\[
\EE{\frac{N^{N}_{t}}{N}}
\] is needed. This is not a simple task. Since proliferation can occur with  uniform distribution at any point along the network $\mathcal{N}_{t}$, the rate of proliferation depends on the total length. 
However the rate of growth of the network, which corresponds to how much the rate of proliferation increases, is influenced by the other elements of the system. The velocity of each living particles $V^{i,N}_{t}$, which affect the expansion, is driven by the term $\nabla C^{N}$. Moreover particles velocity is also affected by the noise, hence its fluctuations may be arbitrarily large and can be controlled only in the average.  This intricacy leads to a very difficult coupled problem. 

\noindent Note that the high complexity of the system is all due to the scaling term $|V^{i,N}_{s}|$ in the uniform measure over $\mathcal{N}_{t}$ \eqref{eq:delta_XXintro}. In order to make this difficulty clearer consider the following 
\[
\widetilde{\delta_{\XX^{N}_{t}}}  =  \frac{1}{N}\int_{0}^{t} \sum_{i=1}^{N^N_s} \ind_{[T^{i,N},\Theta^{i,N})}(s) \delta_{X^{i,N}_s}(\dd x) \dd s,
\]
that is the same as $\delta_{\XX^{N}_{t}}$ where the velocity term is neglected (hence is not uniform on $\mathcal{N}_{t}$). Integrating the function $1$ and computing the expected value leads to 
\[
\EE{\left\langle \widetilde{\delta_{\XX^{N}_{t}}},1 \right\rangle }\leq \int_{0}^{t} \EE{\frac{N^{N}_{s}}{N}}\,ds.
\]
It is now clear that in this case it can be possible to obtain a closed equation for the average total mass, independently of $C^{N}$ or of the particle velocity. 
By considering $\delta_{\XX^{N}_{t}}$ this last inequality is not straightforward, since it involves the term $|V^{i,N}_{s}|$ that has to be controlled separately.

\noindent We managed to solve this issue by closing a first a priori estimate independently of the others (Lemma \ref{lemmma:gradientC^N})
\begin{equation}\label{eq:nablaCNintro}
\EE{ \sup_{t \in [0,T]}\norm{\nabla C^{N}(t,\cdot)}_{\infty}}  \leq C.
\end{equation}
Thanks to the previous estimate it is possible to obtain a control on the particle velocity, that leads to the required bound on the total mass (Lemma \ref{lemma:N^N_t/N}).\\ 

Let us now focus on the tightness of $C^{N}$. The coupling with the particles system in the equation for $C^{N}$ has the form 
\begin{equation}\label{eq:introCoupling}
(K_C*\delta_{\XX^N_t} )C^{N}.
\end{equation}
The analogous term in the equation satisfied by the empirical measure $S^{N}$ in its weak formulation, see equation \eqref{eq:itoformula}, takes the form
\[
\langle S^{N}_{s},\nabla_{v}f\cdot \nabla C^{N}\rangle,
\]
where $f$ is a test function. 
Since $S^{N}$ is converging only weakly as a probability measure we see that  uniform convergence of $\nabla C^{N}$ to $\nabla C$ is required. The bound in \eqref{eq:nablaCNintro} is not enough to prove the convergence of the first derivatives of $C^{N}$, hence we had to refine this result. Thanks to the control on the total mass previously discussed we prove  (Corollary \ref{corollary:D2C^N})
\[
\EE{\sup_{t\in[0,T]}\norm{D^{2}C^{N}(t,\cdot)}_{\infty}}\leq C.
\]
Moreover, from equation \eqref{eq:introCoupling} we also understand that it is required to prove the weak convergence in the sense of finite measure of $\delta_{\XX^{N}}$. The tightness of the sequence $\{\delta_{\XX^{N}}\}_{N\in\NN}$ is proved in the same manner of that of $\{S^{N}\}_{N\in\NN}$ (Theorem \ref{theorem:tightnessS^N}). 

Uniqueness is another difficult topic. Since we aim to prove the propagation of chaos property at the level of bounded measures, proof of  uniqueness at this level of regularity is required. We  first derive a formulation for system \eqref{eq:introPDE}-\eqref{eq:introClimit} in Fourier space, and we  understand the solution in Fourier space in its mild formulation. 
Using the technique developed in \cite{desvillettes2001} we prove some hypoelliptic estimates for the Fourier multiplier involved in the Fourier formulation. These estimates are also  used when dealing with the regularity of solutions. 
Moreover, we will us the fact that if $u$ has $(1+\beta)$ moments along the velocity component, for some $\beta >  0$ and uniformly in time, namely
\[
\sup_{t\in[0,T]} \int_{\RR^{d}}\int_{\RR^{d}}\abs{v}^{1+\beta}\,u(t,\dd x,\dd v) \leq C,
\]
then we  have a control on the Fourier transform of $u$ in the space of $(1+\beta)$-Holder continuous functions. By this last remark we will produce a Gronwall type estimate in Fourier space, proving uniqueness.

In the following, we will particularize our work to the modeling of the \emph{Podospora anserina}, a filamentous fungus which has been widely used as a model organism of research. We focused on the development of the \emph{Hyphae} of the fungus, i.e. the microscopic branching filaments which collectively form the mycelium. This is all based on a series of experiment that can be (partly) found in \cite{dikecHyphalNetworkWhole2020}.
\noindent Many works focused on the growth of a single tip of a hypha,  \cite{tabor2008}, while others studies focused on the collective evolution of the mycelium using a PDE modeling approach, \cite{boswell2012, boswell2008linking, boswell2003}. In our work we first directed our attention on a proper description of the network on the microscopic scale, taking into account the formation of new individuals (creation of new tips for the filament), the possibility of coalescence of existing branches, and linking the above mentioned phenomena with the collective behavior at the macroscopic scale. 

This paper is structured as follows: Section \ref{sec:presentation} is devoted to the rigorous presentation of the model and to all the required hypothesis in order to prove our main results. In Section \ref{sec:limitingfluid} we introduce the mathematical tools that we will need in the rest of the paper and identify (heuristically) the limiting equation of the microscopic model. We also give an accurate statement of our main results Theorem \ref{theorem:main} and Theorem \ref{theorem:regularity}. In Section \ref{sec:tightness} we isolate the a priori estimates needed for the tightness of the particle system, that we will need in order to prove our propagation of chaos results. Section \ref{sec:fourierformulation} is devoted to the Fourier formulation for the limiting equation and to a semi explicit formulation for the solution in Fourier coordinates. In Section \ref{sec:hypoelliptic} we produce some uniform in time Hypoelliptic estimates on the solution, while in Section \ref{sec:uniqueness} we prove uniqueness of measure solutions. Finally in section \ref{sec:smoothness} we prove our regularity result, applying the results developed in the previous sections. 

In all the following, the notation $a \lesssim b$ means that there exists a constant $C>0$ independent of $a$ and $b$ such that $a \le C b$, and $a \lesssim_K b$ means that the constant $C$ does depends on $K$.

\section{Rigorous description of the model}
\label{sec:presentation}

Let us now introduce the model in some detail: as stated in the introduction the case we have in mind is the growth of  a filamentous fungi. Our whole discussion stands in an arbitrary dimension $d$, even if we are more interested in  $d=2,3$.

The fundamental components of our model are the following: 
\begin{itemize}
\item \emph{Tips}: we describe the growth of a branch by the motion of its \emph{tip}, i.e. the final portion of the branch, taken in the direction of growth. The motion is described by a second order SDE, driven by a field of nutrients: position and velocity of the $i$-th particle/tip at time $t$ is denoted by $(X^{i,N}_{t},V^{i,N}_{t})$;
\item \emph{Branching and coalescing distributions}: At time $t=0$ we assume to have $N_{0} = N$ particles: however the total number of tips can change, at random,  due to \emph{branching} or \emph{coalescing} (anastomosis) events. Each tip is considered "active" for $t\in [T^{i,N},\Theta^{i,N})$ denoting its time of birth and death. Branching can appear either in the position of a tip (tip-branching), either uniformly on the set of trajectories of the particles (network-branching). These events happen accordingly to a Poisson point process as specified below;
\item \emph{Concentration of Nutrients}: Tips move according to a field of nutrients, shifting in the direction where concentration is higher. Nutrients are absorbed, in the regions of space that are occupied by the path of the tips (i.e. where the filament is present). The field of nutrients evolution is governed by a PDE for its concentration $C^N(t,x)$. This PDE is coupled with the particles SDEs.
\end{itemize}

Let us now described in some details all of the elements introduced above.

We denote by $N^{N}_{t}$ the total number of particles that are alive or have lived up to time $t$. Note that with this notation we have $N^{N}_{0} = N$ and that $N^{N}_{t}$ is an increasing process. Each of the particles satisfies a second order SDE, valid for a limited time between its birth time (which is zero if the particle is one of the $N$ ancestors) and to its death:
\[
\begin{cases}
\dd X^{i,N}_t = V^{i,N}_t \dd t\\
\dd V^{i,N}_t = - \lambda V^{i,N}_t \dd t + \nabla C^N(t,X^{i,N}_t) \dd t + \sigma \dd B^i_t
\end{cases} t \in [T^{i,N},\Theta^{i,N}),
\]
with $(X^{i,N}_{0},V^{i,N}_{0})$ according to a probability distribution function $u_{0}$ on $\RR^{d}\times \RR^{d}$, and
where $(B^{i}_{t})_{t\geq 0}$ are independent Brownian motions on $\RR^{d}$. In the previous $\lambda > 0$ represents the friction coefficient, $\sigma$ is the diffusion coefficient and $C^{N}$ is the concentration of nutrients introduced above. Here the sign in front of the gradient of the nutrient field denotes the trend for the particle  \emph{leave and avoid} the areas where the potential field is low. This  allow us to obtain the self-avoiding behavior, that is actually observed in spatial exploration phenomena \cite{dikecHyphalNetworkWhole2020}.

In order to specify the Point processes for the branching and coalescence (anastomosis), we  introduce the following empirical measures:
\[
S^N_t(\dd x, \dd v) = \frac{1}{N}\sum_{k=1}^{N^N_t} \ind_{[T^{i,N},\Theta^{i,N})}(t) \delta_{X^{i,N}_t,V^{i,N}_t}(\dd x, \dd v),
\]
\[
\bar{S}^N_t(\dd x) = \frac{1}{N}\sum_{k=1}^{N^N_t} \ind_{[T^{i,N},\Theta^{i,N})}(t) \delta_{X^{i,N}_t}(\dd x),
\]
\[
\delta_{\XX^N_t}(\dd x) = \int_0^t \frac{1}{N}\sum_{k=1}^{N^N_s} \ind_{[T^{i,N},\Theta^{i,N})}(s) |V^{i,N}_t| \delta_{X^{i,N}_s}(\dd x).
\]
The measure $\delta_{\XX^N_t}(\dd x)$ is the uniform measure on the network renormalized by the number of initial particles (see curvilinear abscissa \eqref{eq:curvilinear} in Appendix \ref{appendix:curvinlinear}). A new tip is created according to a Compound Poisson Point process $\Phi(\dd x\times \dd v \times \dd t)$   defined by its compensator
\[
G(v)\dd v \bar{S}^{N}_{t}(\dd x)\dd t + G(v) \dd v (K*\delta_{\XX^N_t})(x)\dd x \dd t 
\]
where $G(v)$ is a $pdf$ on $\RR^{d}$ and $K$ is a mollifier with compact support. The first part of the previous correspond to tip-branching while the second to network-branching. The reasoning is the following: a new tip can appear either on the spatial position of an existing particle, with a starting velocity specified by the density $G$, or it can appear, with an uniform distribution, on the network of the trajectories. However, trajectories are $1$-dimensional objects: therefore the convolution of $\delta_{\XX^N_t}(\dd x)$ with the mollifier $K$ has the purpose to account for the (nonzero) thickness of the hypha. 

Concerning the coalescence (also called anastomosis in biological context): we introduce the Compound Poisson Point process $\Psi(\dd x\times \dd v \times \dd t)$, specified by the compensator
\[
(K*\delta_{\XX^N_t})(x)S^{N}_{t}(\dd x, \dd v)\dd t.
\]
As for the network-branching the rationale behind this is the following: coalescence between a tip and an existing branch can only happen when the two superimpose. Again,
convolution with the kernel $K$ is present to take into account  the thickness of the branch. 

Finally we discuss the equation for the potential field $C^{N}$:
\[
\partial_t C^N = \frac{\sigma_{C}^2}{2} \Delta C^N - K_C*\delta_{\XX^N_t}(x) C^{N}
\]
with $C^{N}(0,x) = C^{0}(x)$ and $K_{C}$ is a mollifier with the property $\abs{\nabla K_{C}(x)}\lesssim K_{C}(x)$. The hyphae network consumes the nutrient field to stay alive and receive the proper sustain, hence the term  $- K_C*\delta_{\XX^N_t}(x) C^{N}$. 
Since the hyphae are infinitely thin  curves (as in the modeling) but do have a  transverse thickness, the kernel $K_{C}$ reintroduces that property. 
\begin{remark}
Here the choice for $K_{C}$ is really the crux of the matter: this specific hypothesis on $K_{C}$ will allow some a priori estimates to follows easily without interlacing to each other. For a discussion without this hypothesis see \cite{flandoli2017}.
\end{remark}

 We summarize below all the hypotheses for the modelling: we split our set of hypotheses into three  blocks, separating the ones needed for the tightness, uniqueness, and the additional assumptions needed for the regularity theorem \ref{theorem:regularity}. 
\begin{hypothesis}\label{hypo:PS}
(Tightness and passage to the limit, Corollary \ref{corollary:tightnessglobal}, Theorem \ref{theorem:limitsupport}):
\begin{enumerate}
\item \label{hypo:K_C} $K_C \in C^{2}_{b}$ with the property that $\abs{\nabla K_{C}(x)}\lesssim K_{C}(x)$. 
\item $K \in C^{2}_{b}$ with compact support;
\item \label{hyp:u0} $u_{0}$ probability density function on $\RR^{d}\times \RR^{d}$; 
\item \label{hyp:G}$G$ $pdf$ on $\RR^{d}$ s.t. $G(v)\dd v \sim u_{0}(\RR^{d},\dd v)$;
\item $G$ \label{hyp:moments} and $u_{0}(\RR^{d},\dd v)$ have finite $1+\overline{\beta}$ moments for some $\overline{\beta}>0$;
\item $C^{0} \in C^{2}_{b}$.

\end{enumerate}
\end{hypothesis}

\begin{remark}\label{rem:noncompact} Remark that Hypothesis \ref{hypo:PS} \eqref{hypo:K_C} prevents $K_C$ to be compactly supported. 
Indeed, up to a translation we can assume that $K_C(0)> 0$, and let $a>0$ such that $|\nabla K_C| \le a K_C$.
Let $x\in \RR^d\backslash\{0\}$ and define for $t\in[0,1]$ $g(t) = e^{a |x| t} K_C(tx)$. Note that we have $\nabla K_C(tx) \cdot x \ge - a |x| K_C(tx)$, and $g'(t) \ge 0$, which implies that $g(1) \ge g(0)$, and $K_C(x) \ge K_C(0) e^{-a |x|}$.
\end{remark}

\begin{hypothesis}\label{hypo:uniqueness}
(Uniqueness, Theorem \ref{theorem:uniqueness})
\begin{itemize}
\item The function $C^{0} \in H^{m+2}$ for some $m> \frac{d}{2}$;
\item The function $u_{0} \in H^{m}$ for some $m > \frac{d}{2}$;
\item The Kernel $K_{C}\in C_{b}^{m+2}\cap H^{2m+2}$ for some $m>\frac{d}{2}$.
\end{itemize}
\end{hypothesis}    

\begin{hypothesis}\label{hypo:smoothness}
(Smoothness, Theorem \ref{theorem:regularity}): 
Denoting by $\hat{f}$ the Fourier transform of $f$
\begin{itemize}
\item For all $N\geq 0$, 
$\sup_{\xi\in\RR^d} |\hat{G}(\xi)\big|\big(1+|\xi|^2)^N < \infty$.
\item For all $N\geq 0$, 
$\sup_{\xi\in\RR^d} |\hat{K}(k)\big|\big(1+|\xi|^2)^N < \infty$.
\item
For all $m \geq 0$, $C_0,K_C \in H^m$ and $K_C \in C^m_b$.
\end{itemize}
\end{hypothesis}


\section{Limit Fluid   equations and precise results}\label{sec:limitingfluid}
\subsection{Notation and function spaces}
From now on we  denote by $\mathcal{M}^{+}_{f}(\RR^{d}\times \RR^{d})$, the space of all the finite positive measures over $\RR^{d}\times \RR^{d}$ and $\C(\RR^d\times\RR^d)$ the space of continuous functions vanishing at infinity. By the Riesz-Markov theorem, $\C$ is the dual of $\cM_f^+$ for the weak convergence of measure. Given $(\phi_{k})_{k\in\NN}$ a countable dense subset of $\C(\RR^{d}\times \RR^{d})$ we define
\[
\delta(\mu,\nu)= \sum_{k=1}^{\infty}\frac{1}{2^{k}}\frac{\abs{ \left\langle \mu,\phi_{k} \right\rangle - \left\langle \nu,\phi_{k} \right\rangle}}{1+\abs{ \left\langle \mu,\phi_{k} \right\rangle - \left\langle \nu,\phi_{k} \right\rangle}}  
\]
which makes $\mathcal{M}^{+}_{f}(\RR^{d}\times \RR^{d})$ into a complete metric space, which topology corresponds to weak
convergence of measure. 
We  then denote by $\mathcal{D}([0,T];\mathcal{M}^{+}_{f}(\RR^{d}\times \RR^{d}))$ the space of all the càdlàg functions from $[0,T]$ to $\mathcal{M}^{+}_{f}(\RR^{d}\times \RR^{d})$, endowed with the Skorohod topology. 

We also denote by $C^{1}(\RR^{d})$  the space of all $C^{1}(\RR^{d})$ functions, endowed with the topology of uniform convergence over compact sets. We recall  this topology is generated by the following metric 
\[
d(f,g) = \sum_{N=1}^{\infty} 2^{-N} \norm{f-g}_{C^{1}\big(B(0,N)\big)} \wedge 1.
\]
We shall also introduce the spaces
\[
X = \mathcal{D}\Big([0,T];\mathcal{M}^{+}_{f}(\RR^{d}\times \RR^{d})\Big) \times \mathcal{D}\Big([0,T];\mathcal{M}^{+}_{f}(\RR^{d})\Big) \times C\Big([0,T];C^{1}(\RR^{d})\Big)
\]
endowed with the product metric introduced above, and 
\[
X_{b} = C\Big([0,T];\mathcal{M}^{+}_{f}(\RR^{d}\times \RR^{d})\Big) \times C\Big([0,T];\mathcal{M}^{+}_{f}(\RR^{d})\Big) \times C\Big([0,T];W^{1,\infty}(\RR^{d})\Big)
\]
again with the product metric. 

\subsection{It\^o formula and limit equation}
\label{sec:itoformula}
Let us remind that we are interesting by the convergence of the following empirical measures :
\begin{equation}\label{eq:def_SN}
S^N_t(\dd x, \dd v) = \frac1N\sum_{k=1}^{N^N_t} \ind_{[T^{i,N},\Theta^{i,N})}(t) \delta_{X^{i,N}_t,V^{i,N}_t}(\dd x, \dd v),
\end{equation}
\begin{equation}\label{eq:def_SNbar}
\bar{S}^N_t(\dd x) = \frac1N\sum_{k=1}^{N^N_t} \ind_{[T^{i,N},\Theta^{i,N})}(t) \delta_{X^{i,N}_t}(\dd x),
\end{equation}
\begin{equation}\label{eq:def_RN}
\delta_{\XX^N_t}(\dd x) = \int_0^t \frac1N\sum_{k=1}^{N^N_s} \ind_{[T^{i,N},\Theta^{i,N})}(s) |V^{i,N}_s| \delta_{X^{i,N}_s}(\dd x) \dd s,
\end{equation}
for $\beta \leq \overline{\beta}$
\begin{equation}\label{eq:def_deltabeta}
\delta_{\XX^N_t}^{\beta}(\dd x) = \int_0^t \frac1N\sum_{k=1}^{N^N_s} \ind_{[T^{i,N},\Theta^{i,N})}(s) |V^{i,N}_s|^{1+\beta} \delta_{X^{i,N}_s}(\dd x) \dd s
\end{equation}
and by the convergence of the function $C^N$.

For every $C^2$ function $f : \RR^{2d} \to \RR$, we finally have, by It\^o formula, used here for the $X^{i,N},V^{i,N}$ and using the fact that branching and merging are encoded thanks to Poisson Point Processes,
\begin{align}\label{eq:itoformula}
\langle S^{N}_{t},f\rangle - \langle u_0,f\rangle = & 
    \underbrace{\int_{0}^{t}\langle S^{N}_{s},v\cdot \nabla_{x}f\rangle \dd s}_{\text{kinetic equation}} 
    -\underbrace{\int_{0}^{t}\langle S^{N}_{s},\lambda v \cdot \nabla_{v}f\rangle \dd s}_{\text{friction}}\\ 
    &+
    \underbrace{\int_{0}^{t}\langle S^{N}_{s},\nabla_{v}f\cdot \nabla C^{N}\rangle \dd s}_{\text{potential}}
    + \underbrace{\frac{\sigma^2}{2} \int_{0}^{t}\langle S^{N}_{s},\Delta_{v}f\rangle \dd s}_{\text{noise on the velocity}}\nonumber\\
    &+\underbrace{\int_{0}^{t}\int_{\RR^{2d}} f(x,v) G(v) \dd v \overline{S}^{N}_{s}(\dd x)\dd s }_{\text{creation at the new tips}}\nonumber\\
    &+\underbrace{\int_{0}^{t}\int_{\RR^{2d}} f(x,v) G(v) (K*\delta_{\XX^{N}_s})(x)\dd x \dd v\dd s}_{\text{creation on the network}}\nonumber\\
    &-\underbrace{\int_{0}^{t} \int_{\RR^{2d}} f(x,v) (K*\delta_{\XX^{N}_s})(x)S^{N}_{s}(\dd x,\dd v)\dd s }_{\text{coalescence/anastomosis}}\nonumber\\
    &+ \underbrace{M^{1,N,f}_{t}}_{\substack{\text{martingale remainder}\nonumber\\ \text{of motion}}} 
    + \underbrace{M^{2,N,f}_{t}}_{\substack{\text{martingale remainder}\nonumber\\ \text{of branching}}} + \underbrace{M^{3,N,f}_{t}}_{\substack{\text{martingale remainder}\nonumber\\ \text{of anastomosis}}}.
\end{align}
The explicit martingale terms are the following :
\[
M^{1,N,f}_{t} = \int_{0}^{t}\frac{\sigma}{N}\sum_{i=1}^{N^{N}(s)} \ind_{[T^{i,N},\Theta^{i,N})}(s) \nabla_{v}f(X^{i,N}_{s},V^{i,N}_{s}) \cdot \dd B^{i}_{s}
\]
\[
M^{2,N,f}_{t}\hspace{-0.1cm}= \hspace{-0.1cm}\int_{0}^{t}\hspace{-0.1cm}\int_{\RR^{2d}} \hspace{-0.1cm}f(x,v)\Big[\Phi(\dd x\times \dd v\times \dd s) - G(v)\overline{S}^{N}_{s}(\dd x)\dd v \dd s- G(v)(K*\delta_{\XX^{N}_{s}})(x)\dd x\dd v \dd s\Big]
\]
\[
M^{3,N,f}_{t}=-\int_{0}^{t}\int_{\RR^{2d}} f(x,y)\Big[\Psi(\dd x\times \dd v\times \dd s) - (K*\delta_{\XX^{N}_{s}})(x) S_{s}^{N}(\dd x,\dd v) \dd s\Big]
\]
and
\[\partial_t C^N = \frac{\sigma^2_C}{2} \Delta C^N - K_C*\delta_{\XX^N} C^N.\]

This gives us the wanted limit equations:
\begin{equation}\label{eq:PDE-system}
\left\{
\begin{aligned}
\partial_t u + v \cdot \nabla_x u - \lambda \div_v( v u ) =& \frac{\sigma^2}{2} \Delta_v u - \nabla C \cdot \nabla_v u \\ 
&+  G(v) \overline{u} + G(v)(K* \rho) - (K*\rho) u \\
\partial_t \rho(t,x) =& \int_{\RR^d}|v| u(t,x,v)\dd v \\
\overline{u}(t,x) =& \int_{\RR^d} u(t,x,v) \dd v \\
\partial_t C =& \frac{\sigma_C^2}{2} \Delta C - (K_C*\rho) C\\
u(0,\cdot,\cdot) = u_0,\quad C&(0,\cdot) = C_0,\quad \rho(0,\cdot)= 0.
\end{aligned}
\right.
\end{equation}

Note that in the previous system, each wanted behavior of the modeling can be retrieved. Indeed, the LHS of the first equation remind us that we are working in the phase space. The first term of the RHS of the first equation comes from the Brownian motions, whereas the second term emphases that the dynamic is directed by the nutrient field, and the last three terms encode the branching-coalescence process (first the branching on the network, then the branching at the tips and finally the coalescence on the network). The second equation is the equation for the skeleton of the network. The third equation is the equation for the density of tips, and the fourth equation describe how the nutrient is used by the network. 



%

\subsection{Definitions of measure solutions and main Theorem}
\begin{definition}\label{def:solutionsystemPDE}
A measure solution of system of equation  \eqref{eq:PDE-system}  is a triple $(u,\rho,C) \in X_b$, for every test function $\phi \in C^{\infty}_{c}(\RR^{d}\times \RR^{d})$ one has
\begin{multline*}
\int_{\RR^{d}\times\RR^{d}} \phi(x,v)u(t,\dd x,\dd v) - \int_{\RR^{d}\times\RR^{d}}\phi(x,v) u(0,\dd x, \dd v)  \\
-\int_{0}^{t}\int_{\RR^{d}\times\RR^{d}}v \cdot \nabla_{x}\phi(x,v)u(s,\dd x,\dd v)\dd s + \lambda\int_{0}^{t}\int_{\RR^{d}\times\RR^{d}}\nabla_{v}\phi(x,v)\cdot v u(s,\dd x, \dd v)\dd s\\
= \frac{\sigma^{2}}{2}\int_{0}^{t}\int_{\RR^{d}\times\RR^{d}}\hspace{-0.3cm}\Delta_{v}\phi(x,v)u(s,\dd x,\dd v)\dd s + \int_{0}^{t}\int_{\RR^{d}\times\RR^{d}}\hspace{-0.3cm}\nabla_{v}\phi(x,v)\cdot \nabla C(s,x)u(s,\dd x,\dd v)\dd s\\
+\int_{0}^{t}\int_{\RR^{d}\times\RR^{d}}    \phi(x,v)G(v)\overline{u}_{s}(\dd x)\dd v\dd s+ \int_{0}^{t}\int_{\RR^{d}\times\RR^{d}}\phi(x,v)G(v)(K*\rho(s,\cdot))(x)\dd x\dd v\dd s\\
- \int_{0}^{t}\int_{\RR^{d}\times\RR^{d}}\phi(x,v)(K*\rho(s,\cdot))(x)u(s,\dd x,\dd v)\dd s, 
\end{multline*}

\[
\overline{u}(t,\dd x) = u(t,\dd x, \RR^{d}), 
\]
\noindent the measure $\rho$ satisfies, for every test function $\phi \in C^{\infty}_{c}(\RR^{d})$
\[
\int_{\RR^{d}}\phi(x)\rho(t,\dd x) = \int_{\RR^{d}} \phi(x)\rho(0,\dd x) + \int_{0}^{t} \int_{\RR^{d}} \phi(x)\abs{v} u(s,\dd x,\dd v)\dd s,
\]

\noindent and the function $C$ satisfies
\begin{multline}
\int_{\RR^{d}}C(t,x)\phi(x)\dd x = \int_{\RR^{d}}C(0,x)\phi(x)\dd x \\
+\int_{0}^{t}\int_{\RR^{d}}\frac{\sigma_{c}^{2}}{2}C(s,x)\Delta\phi(x)\dd x \dd s- \int_{0}^{t}\int_{\RR^{d}}C(s,x)(K_{C}*\rho(t,\cdot))(x)\phi(x)\dd x\dd s.
\end{multline}

\end{definition}
We end up this section by properly stating our main results:
\begin{theorem}[Propagation of chaos]\label{theorem:main}
If the hypothesis \ref{hypo:PS} are satisfied, the family of laws $\{Q^N\}_{N \in \NN}$  of the triple $(S^{N},\delta_{\XX^{N}},C^{N})_{N \in \NN}$ is tight on the space $X$. Moreover $\{Q^N\}_{N \in \NN}$ converges weakly in $X$ to $\delta_{(u,\rho,C)}$, where the triple $(u,\rho,C) \in X_b$ 
 is the unique measure solution of system of equation $\eqref{eq:PDE-system}$.
\end{theorem}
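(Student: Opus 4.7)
The plan is to follow the classical three-step propagation-of-chaos scheme: first, establish tightness of $\{Q^N\}_{N\in\NN}$ on the path space $X$; second, show that every subsequential weak limit is supported on measure solutions of \eqref{eq:PDE-system} in the sense of Definition \ref{def:solutionsystemPDE}; and third, invoke uniqueness to upgrade tightness to convergence of the full sequence, the limit collapsing to the Dirac mass $\delta_{(u,\rho,C)}$ because the solution of the mean-field system is deterministic.

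For tightness I would work component-by-component along the product structure of $X$, setting up a cascade of a priori estimates. The structural assumption $|\nabla K_C|\lesssim K_C$ from Hypothesis \ref{hypo:PS}\eqref{hypo:K_C} is the lynchpin: it allows one to close the bound $\EE{\sup_{t\le T}\norm{\nabla C^N(t,\cdot)}_\infty}\le C$ directly from the linear PDE for $C^N$, independently of the particle system (Lemma \ref{lemmma:gradientC^N}). This uniform control on the drift then yields $\EE{|V^{i,N}_t|}\lesssim 1$ along the life interval of each particle, which in turn closes an integral inequality for $\EE{N^N_t/N}$ via the compensators of $\Phi$ and $\Psi$ (Lemma \ref{lemma:N^N_t/N}), and a bootstrap gives the second-derivative bound on $C^N$ (Corollary \ref{corollary:D2C^N}). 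With these estimates in hand, tightness of $\{S^N\}$ and $\{\delta_{\XX^N}\}$ in the respective Skorohod spaces follows from an Aldous-type criterion applied to the semimartingale decomposition \eqref{eq:itoformula}, while tightness of $\{C^N\}$ in $C([0,T];C^1(\RR^d))$ follows from parabolic regularity combined with Arzel\`a--Ascoli.

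For identification of the limit, let $Q^\infty$ be a weak subsequential limit and use Skorohod representation to realize $(S^N,\delta_{\XX^N},C^N)\to (u,\rho,C)$ almost surely in $X$. The uniform bound on $D^2 C^N$ upgrades the convergence of $C^N$ to locally uniform convergence of $\nabla C^N$, which is exactly what lets the nonlinear term $\langle S^N,\nabla_v f\cdot \nabla C^N\rangle$ pass to $\langle u,\nabla_v f\cdot \nabla C\rangle$ despite only weak-measure convergence of $S^N$. The convolutions $K\ast\delta_{\XX^N}$ and $K_C\ast\delta_{\XX^N}$ converge uniformly on compacts by smoothness of the kernels, and the martingale remainders $M^{1,N,f},M^{2,N,f},M^{3,N,f}$ vanish in $L^2$ since their brackets scale as $1/N$ times finite-mass quantities already controlled. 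Passing to the limit in \eqref{eq:itoformula} and in the equation for $C^N$ yields that $(u,\rho,C)$ is a measure solution of \eqref{eq:PDE-system}. Uniqueness (Theorem \ref{theorem:uniqueness}) then forces all subsequential limits to coincide with the same deterministic triple, hence $Q^N\Rightarrow \delta_{(u,\rho,C)}$.

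The main obstacle is the circular coupling flagged in Subsection \ref{subsect:difficulties}: $N^N_t/N$ depends on the growth of $\delta_{\XX^N}$, which carries the weight $|V^{i,N}|$, which is driven by $\nabla C^N$, which is fed in turn by $\delta_{\XX^N}$. Breaking this loop at the level of $\nabla C^N$ is precisely the role of Hypothesis \ref{hypo:PS}\eqref{hypo:K_C}, and without it the cascade cannot start. A secondary subtlety is that the uniqueness step demands propagating the $(1+\overline\beta)$-velocity moment (encoded by $\delta_{\XX^N}^\beta$ in \eqref{eq:def_deltabeta}) to the limit, so that the Fourier transform of $u$ enjoys the H\"older regularity needed to run the Gr\"onwall argument in the Fourier formulation of Sections \ref{sec:fourierformulation}--\ref{sec:uniqueness}.
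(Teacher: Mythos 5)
Your proposal tracks the paper's proof step for step: the same cascade of a priori estimates (the a.s.\ bound on $\nabla C^N$ via $|\nabla K_C|\lesssim K_C$ in Lemma \ref{lemmma:gradientC^N}, then the velocity control of Lemma \ref{lemma:estimatesOnV}, the Wald-type bound on $\EE{N^N_t/N}$ in Lemma \ref{lemma:N^N_t/N}, and the $D^2 C^N$ bound of Corollary \ref{corollary:D2C^N}), tightness via Aldous and Simon's compactness lemma, vanishing of the three martingale remainders, propagation of the $(1+\beta)$-moment through $\delta_{\XX^N}^\beta$, and finally uniqueness by the Fourier--hypoelliptic Gr\"onwall argument of Theorem \ref{theorem:uniqueness}. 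The only cosmetic discrepancy is that Lemma \ref{lemmma:gradientC^N} gives an almost sure bound rather than a bound in expectation, but that only strengthens what you assert.
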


\noindent We prove the tightness of the sequence in Section \ref{sec:tightness}. The proof concludes itself in Section \ref{sec:uniqueness}  by proving uniqueness for the limit system. 

\begin{theorem}[Smoothness of solutions]\label{theorem:regularity}
Under Hypothesis \ref{hypo:PS}, 
\ref{hypo:uniqueness} and 
 \ref{hypo:smoothness}, for all $t_0>0$, the solution $u$ constructed in Theorem \ref{theorem:main} lies in $C([t_{0},T];C^{\infty}_{b}(\RR^{d}\times\RR^{d}))$. 
\end{theorem}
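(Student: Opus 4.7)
The plan is to work in Fourier space and exploit the hypoelliptic structure of the kinetic equation \eqref{eq:PDE-system}, following \cite{desvillettes2001}. Denoting by $\hat u(t,\xi,\eta)$ the Fourier transform of $u$ in $(x,v)$, the kinetic transport $v\cdot\nabla_x u$ and the friction $-\lambda\div_v(vu)$ become first-order transport operators in $\eta$, while the $v$-diffusion becomes multiplication by $-\tfrac{\sigma^2}{2}|\eta|^2$. Integrating along the explicit characteristics produces the Fourier mild formulation of Section \ref{sec:fourierformulation}, whose Green's function has the form $\exp(-\Phi_{t-s}(\xi,\eta))$ with $\Phi_\tau$ a positive quadratic form in $(\xi,\eta)$ for every $\tau>0$. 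This is the hypoelliptic mechanism transferring velocity dissipation into spatial regularity.

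The main input is the decay estimate from Section \ref{sec:hypoelliptic}, which roughly gives $\Phi_\tau(\xi,\eta)\gtrsim \tau|\eta|^2 + \tau^3|\xi|^2$ for small $\tau$. I would run a bootstrap on Duhamel's formula: starting from the uniform mass bound $\|\hat u(t,\cdot,\cdot)\|_\infty\lesssim 1$ that follows from Section \ref{sec:tightness}, iteratively upgrade pointwise decay to $|\hat u(t,\xi,\eta)|\lesssim (1+|\xi|^2+|\eta|^2)^{-N}$ for every $N\geq 0$, on each subinterval $[t_0,T]$. The branching sources $G(v)\overline u$ and $G(v)(K*\rho)$ factor in Fourier as $\hat G(\eta)\cdot(\cdots)(\xi)$, and since $\hat G$ and $\hat K$ decay faster than any polynomial by Hypothesis \ref{hypo:smoothness}, they never degrade the estimate. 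The coalescence term $(K*\rho)u$ becomes a convolution in $\xi$ with $\hat K\hat\rho$ and preserves decay for the same reason. The drift term $\nabla C\cdot\nabla_v u$ produces a factor $\eta$ in Fourier, absorbed by the Gaussian-type factor $\exp(-c\tau|\eta|^2)$ of the multiplier; this requires $\nabla C(t,\cdot)\in C^\infty_b$, which follows separately from the linear heat equation for $C$ with smooth potential $K_C*\rho$ and $C^\infty_b$ initial data guaranteed by Hypotheses \ref{hypo:uniqueness} and \ref{hypo:smoothness}.

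The delicate step, and the main obstacle, is closing the bootstrap quantitatively. Each iteration must convert the current polynomial decay rate of $\hat u$ into a strictly better one while keeping the time integral in the Duhamel formula convergent near $s=t$, because extracting polynomial weights by differentiating $\exp(-\Phi_{t-s})$ in $(\xi,\eta)$ produces factors $(t-s)^{-k}$ singular at the endpoint. This is exactly where the anisotropic lower bound $\Phi_\tau\gtrsim \tau|\eta|^2+\tau^3|\xi|^2$ becomes essential: the singular powers $\tau^{-k}$ are compensated against the Gaussian weights in $\xi$ and $\eta$ to keep the integrand $L^1$ in $s$. Once arbitrary polynomial decay of $\hat u(t,\cdot,\cdot)$ on $[t_0,T]$ is established, Fourier inversion yields $u(t,\cdot,\cdot)\in C^\infty_b(\RR^d\times\RR^d)$, and continuity in $t$ follows from continuity of each side of the mild identity.
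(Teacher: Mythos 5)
Your proposal follows essentially the same route as the paper: pass to the Fourier mild formulation \eqref{eq:MildFourier}, exploit the hypoelliptic lower bound of Proposition \ref{prop:hypo} (the anisotropic $\tau|\eta|^2+\tau^3|\xi|^2$ inside the Gaussian weight), run a bootstrap that trades the singularity $(t-s)^{-2/3}$ against a gain of $(1+|k|^2+|\xi|^2)^{-1/6}$ per iteration, use the rapid decay of $\hat G$ and $\hat K$ from Hypothesis \ref{hypo:smoothness} to keep the branching/coalescence sources harmless, and control the drift term via the $H^m$-regularity of $C$ supplied by Lemma \ref{lem:smoothness_C}, finishing by Fourier inversion. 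This is precisely the induction the paper carries out, so the proposal is correct and matches the paper's strategy; the only cosmetic difference is that the paper quantifies the gain as exactly $1/6$ per step and invokes Desvillettes--Villani Lemma 5.3 for the final integral bound, whereas you describe the compensation mechanism more qualitatively.
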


\noindent The proof of this result can be found in Section \ref{sec:smoothness}.

\begin{remark}
The proof of Theorem \ref{theorem:regularity} gives us a bit better, actually denoting by $\hat{u}$ the Fourier transform of $u$,  $\hat{u}(t,\cdot,\cdot)$ is decaying faster than any polynomials, impliying for example the $ \hat{u}(t,\cdot,\cdot)\in H^m$ for all $m\in \NN$.
\end{remark}

\section{A priori estimates on microscopic model and tightness}\label{sec:tightness}
We start by some a priori estimates on the microscopic model, needed for the tightness, and thus the propagation of chaos.
\subsection{A priori estimates}
\begin{lemma}
\label{lemma:randomFeynmanKac}
Suppose that $w(t,x)$ is of class $C^{1,2}([0,T]\times \RR^{d})$ and satisfies the following PDE:
\begin{equation}\label{eq:PDErandomFeynmanKac}
\left\{
\begin{aligned}
&\partial_{t}w(t,x) = \frac{a^{2}}{2}\Delta w(t,x) - \big(K_C*f(t,\cdot)\big)(x) w(t,x)\quad (t,x) \in [0,T]\times \RR^{d}\\
&w(0,x) = w_{0}(x)
\end{aligned}
\right.
\end{equation}
where $f\in L^{\infty}([0,T];\mathcal{M}^{+}_{f}(\RR^{d}))$, $K_C \in C^{2}_{b}(\RR^{d})$ is non negative and satisfies $\abs{\nabla K}\lesssim K$. Assume also that $w_{0}\in C^{2}_{b}(\RR^{d})$. Then 
\begin{equation}\label{eq:randomFeynmanKac1}
\sup_{t\in [0,T]}\norm{w(t,\cdot)}_{C^{1}_{b}} \leq C\norm{w_{0}}_{C^{1}_{b}}
\end{equation}
and
\begin{equation}\label{eq:randomFeynmanKac2}
\sup_{t\in [0,T]}\norm{w(t,\cdot)}_{C^{2}_{b}}\lesssim_{T} (1+\norm{w_{0}}_{C^{2}_{b}})\norm{K}_{C^{2}_{b}}\sup_{t\in[0,T]}f(t,\RR^{d})
\end{equation}
\end{lemma}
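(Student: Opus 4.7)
The plan is to prove the whole lemma through the probabilistic Feynman--Kac representation, which is particularly clean here because the potential $V(t,x):=(K_C*f(t,\cdot))(x)$ is non-negative (since $K_C\geq 0$ and $f(t,\cdot)$ is a positive measure). Writing $\Phi(t,x;\omega):=\int_0^t V\bigl(t-r,x+aB_r(\omega)\bigr)\,dr\geq 0$, where $(B_r)_{r\geq 0}$ is a standard $d$-dimensional Brownian motion, the representation reads
\[
w(t,x)=\EE\bigl[w_0(x+aB_t)\,e^{-\Phi(t,x)}\bigr].
\]
It can be obtained either by time-reversal $\tilde w(s,y):=w(t-s,y)$, reducing the problem to a standard backward Kolmogorov equation with bounded potential, or by Itô's formula. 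Since $K_C\in C^2_b$ and $f(t,\cdot)$ is a finite positive measure with $\sup_{[0,T]} f(t,\RR^d)<\infty$, $V(t,\cdot)$ lies in $C^2_b(\RR^d)$ uniformly in $t$, so the representation and all subsequent differentiations under the expectation are legitimate.

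The sup-norm estimate is then immediate: $\Phi\geq 0$ implies $e^{-\Phi}\leq 1$, hence $\|w(t,\cdot)\|_\infty\leq \|w_0\|_\infty$. For the $C^1_b$ bound \eqref{eq:randomFeynmanKac1}, I would differentiate under the expectation to get
\[
\nabla w(t,x)=\EE\bigl[\nabla w_0(x+aB_t)e^{-\Phi}\bigr]-\EE\bigl[w_0(x+aB_t)\,\nabla_x\Phi\,e^{-\Phi}\bigr].
\]
The key structural hypothesis $|\nabla K_C|\lesssim K_C$, together with positivity of $f$, gives $|\nabla V|\leq c\,V$ pointwise and therefore $|\nabla_x\Phi|\leq c\,\Phi$. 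Combined with the elementary inequality $\Phi e^{-\Phi}\leq 1/e$, this yields $\|\nabla w(t,\cdot)\|_\infty\lesssim \|w_0\|_{C^1_b}$ uniformly in $T$ and in $f$, which is exactly \eqref{eq:randomFeynmanKac1}.

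For \eqref{eq:randomFeynmanKac2} I would differentiate once more and split the four resulting families of terms: those involving $D^2 w_0$, $\nabla w_0\otimes \nabla_x\Phi$, and $w_0\cdot (\nabla_x\Phi\otimes\nabla_x\Phi)$ are all controlled as in Step 2 by using $|\nabla_x\Phi|\leq c\Phi$ and the boundedness of $y e^{-y}$ and $y^2 e^{-y}$ on $[0,\infty)$, contributing at most a constant times $\|w_0\|_{C^2_b}$. The only term that genuinely requires a second derivative on $K_C$ is the one involving
\[
D^2_x\Phi(t,x)=\int_0^t (D^2K_C)*f(t-r,\cdot)(x+aB_r)\,dr,
\]
which, for lack of an analogue of $|\nabla K_C|\lesssim K_C$ at the second-order level, I would simply bound by $t\|D^2K_C\|_\infty\sup_{[0,T]}f(s,\RR^d)$. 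This is precisely what produces the $\|K_C\|_{C^2_b}\sup_s f(s,\RR^d)$ factor in the target estimate, and the $(1+\|w_0\|_{C^2_b})$ prefactor absorbs both the terms depending on $w_0$ through the above brute bound and the lower-order $C^2_b$-contributions from the first three families.

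The main obstacle is essentially bookkeeping rather than conceptual: tracking carefully that the hypothesis $|\nabla K_C|\lesssim K_C$ is used in a critical way to make \eqref{eq:randomFeynmanKac1} independent of $\sup_s f(s,\RR^d)$ (and hence later applicable to the bootstrap controlling $C^N$ before any a priori bound on the total mass is available), while the weaker-looking estimate \eqref{eq:randomFeynmanKac2} loses this independence because of the second derivative of $K_C$ appearing in $D^2_x\Phi$. Verifying the Feynman--Kac representation when $V$ is only $L^\infty$ in time but $C^2_b$ in space, and justifying the two successive differentiations under the expectation using the uniform bound $e^{-\Phi}\leq 1$ and polynomial-times-exponential moments of $\Phi$, is standard but must be stated explicitly.
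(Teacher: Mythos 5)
Your proposal follows essentially the same route as the paper: Feynman--Kac representation $w(t,x)=\EE{e^{-\Phi}w_0(x+aW_t)}$, differentiation under the expectation, the crucial use of $|\nabla K_C|\lesssim K_C$ to get $|\nabla_x\Phi|\lesssim\Phi$ and hence a uniform $C^1_b$ bound via $\Phi e^{-\Phi}\le 1/e$, and for the $C^2_b$ estimate accepting the loss through $\|D^2K_C\|_\infty$ and the total mass $\sup_s f(s,\RR^d)$ since no second-order analogue of $|\nabla K_C|\lesssim K_C$ is available. Your bookkeeping is in fact a bit more explicit than the paper's (which in particular states the slightly imprecise ``$|\nabla e^{-g}|\le 1$'' where the implicit constant of $|\nabla K_C|\lesssim K_C$ should appear), and you correctly identify, as in Remark~\ref{remark:randomFeynmanKac}, that the $f$-independence of the $C^1_b$ bound is the reason Hypothesis~\ref{hypo:PS}\eqref{hypo:K_C} is imposed.
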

\begin{proof}
By Feynman-Kac representation we have 
\begin{equation}\label{eq:randomFeynmanKacBase}
w(t,x) = \EE{ \exp{\left( -\int_{0}^{t}(K_C*f(s,\cdot))(x+aW_{t}-aW_{s})\dd s \right)}w_{0}(x+aW_{t})},
\end{equation}
where $(W_{t})_{t \geq 0}$ is a standard Brownian motion.
Notice that, for every $g \in C^{2}_{b}(\RR^{d})$ with $\abs{\nabla g} \lesssim g$ one has
\[
\nabla e^{-g} = -\nabla g e^{-g}
\]
so that, using the hypothesis on the first derivative we have 
\[
\abs{\nabla e^{-g}} \leq 1.
\]
Moreover for each $i, j$
\[ \abs{\partial_{i}\partial_{j}e^{-g}}\leq \abs{\partial_{i}g}+\abs{\partial_{i}\partial_{j}g} \lesssim \norm{g}_{C^{2}_{b}}.\]
Applying the previous computation in equation \eqref{eq:randomFeynmanKacBase} we get to \eqref{eq:randomFeynmanKac1}. 
Finally
\[ \norm{D^{2}w(t,\cdot)}_{\infty}\leq \norm{w_{0}}_{C^{2}_{b}}\norm{K_C}_{C^{2}_{b}}\sup_{t\in[0,T]}f(t,\RR^{d}) \] 
which leads to \eqref{eq:randomFeynmanKac2}.

\begin{remark}\label{remark:randomFeynmanKac}
Note that, thanks to the assumptions, the bound \eqref{eq:randomFeynmanKac1} of the previous lemma is independent of $K_C*f$. This uniform bound is in fact the main reason for hypothesis \eqref{hypo:K_C} in \ref{hypo:PS}.
\end{remark}
\begin{lemma}
\label{lemmma:gradientC^N}
Under assumptions \ref{hypo:PS} we have
\begin{equation}\label{eq:aprioriGradientC^N}
\sup_{t\in[0,T]}\norm{ C^{N}(t,\cdot)}_{C^{1}_{b}} \leq C\norm{C^{0}}_{C^{1}_{b}} \quad a.s.
\end{equation}
\end{lemma}
\begin{proof}
By the hypothesis \ref{hypo:PS} we have
$K*\delta_{\XX^{N}} \in C([0,T];C^{\infty}(\RR^{d}))$ and $\abs{\nabla (K*\delta_{\XX^{N}})}$ $= \abs{(\nabla K)*\delta_{\XX^{N}}} \lesssim K*\delta_{\XX^{N}}$ almost surely. We can then apply the same strategy as Lemma \ref{lemma:randomFeynmanKac} and obtain the desired result.
\end{proof}
\begin{lemma}\label{lemma:estimatesOnV}
There exist a sequence of i.i.d. random variables $\chi_{i}$ satisfying $\EE{\exp{(a\chi_{i}^{2})}}<\infty$ for some $a > 0$, such that, for all $\beta \leq \overline{\beta}$ (where $\bar{\beta}$ is defined in Hypothesis  \ref{hypo:PS}),
\begin{equation}\label{eq:estimatesOnV1}
\sup_{t\in[0,T]} \abs{V^{i,N}_{t}}^{1+\beta}\lesssim_{T} Z^{i,\beta} := \abs{V^{i,N}_{T^{i,N}}}^{1+\beta} + \chi_{i}^{1+\beta} + \norm{C^{0}}_{C^{1}_{b}}^{1+\beta}
\end{equation}
and
\begin{equation}\label{eq:estimatesOnV2}
\abs{X^{i,N}_{t}-X^{i,N}_{s}} + \abs{V^{i,N}_{t}-V^{i,N}_{s}}\lesssim Z^{i,0}\abs{t-s}^{\frac{1}{2}-\eps}
\end{equation}
for all $0 <\eps<\frac{1}{2}$.
Moreover for all $\beta$ the variables $(Z^{i,\beta})_{i\in\NN}$ are i.i.d r.v. 
\end{lemma}
\begin{proof}
For $t\in[T^{i,N},\Theta^{i,N})$
\begin{equation}\label{eq:explicitV^i}
V^{i,N}_{t} = V^{i,N}_{T^{i,N}}e^{-\lambda(t-T^{i,N})}+ \int_{T^{i,N}}^{t} e^{-\lambda(t-r)}\nabla C^{N}(r,X^{i,N}_{r})\dd r+\sigma\int_{T^{i,N}}^{t}e^{-\lambda(t-r)}\dd B^{i}_{r}.
\end{equation}
Call
\[ U^{i}_{t,t'} =  \int_{t}^{t'}e^{-\lambda(t'-r)}\dd B^{i}_{r}\]
and notice that these are Gaussian random variables. Moreover there is a constant $b_{\lambda}$ depending on $\lambda$ such that 
\[ \textrm{Var}(\lvert{U^{i}_{t,t'}-U^{i}_{s,s'}}\vert) \lesssim b_{\lambda} \abs{t-s}+\abs{t'-s'}\]
and, since the variables are Gaussian, we have
\[ \EE{ \left( \frac{\lvert{U^{i}_{t,t'}-U^{i}_{s,s'}}\vert}{\abs{t-s}+\abs{t'-s'}}\right)^{2k}} \lesssim  \frac{b_{\lambda}^{k}(2k)!}{2^{k}k!}.\]
Hence, by the Kolmogorov regularity Theorem and Garsia Rodemich Rumsey inequality (see \cite{garsiaRealVariableLemma1970}), there exists a random variable $\chi_{i}$, $\EE{ \exp{(a\chi_{i}^{2})}}<\infty$ for some $a>0$ depending on $\lambda$, such that 
\begin{equation}\label{eq:regularityU}
\vert U^{i}_{t,t'}-U^{i}_{s,s'} \vert \lesssim \chi_{i} (\abs{t-s}+\abs{t'-s'})^{\frac{1}{2}-\eps}
\end{equation}
a.s. for every $\eps \in (0,\frac{1}{2})$. Note that the variables $\chi_{i}$ depends only on $B^{i}_{t}$ and $\lambda$ and thus are i.i.d. Plugging the last inequality into \eqref{eq:explicitV^i}, and using \eqref{eq:aprioriGradientC^N} we obtain the first part of the lemma. The fact that the variables $Z^{i,\beta}$ are independent is a consequence of  the independence of $\chi_{i}$ and of $V^{i,N}_{T^{i,N}}$.
\end{proof}
\begin{remark}\label{remark:estimatesOnV}
Notice that for all $i\in \NN$ the random variable $Z^{i,\beta}$ is independent on $T^{i,N}$: in fact $Z^{i,\beta}$ depends only on the variables $\chi_{i}$, which are independent on $T^{i,N}$, and on $V^{i,N}_{T^{i,N}}$ which is distributed as $G(v)\dd v$. This fact will be crucial in the proof of Lemma \ref{lemma:N^N_t/N}.
\end{remark}

\begin{lemma}\label{lemma:N^N_t/N}
With the same notation of Lemma, \ref{lemma:estimatesOnV} for all $\beta \leq \overline{\beta} $
\begin{equation}\label{eq:sumVi}
\EE{ \frac{1}{N}\sum_{i=1}^{N^{N}_{t}}\abs{V^{i,N}_{t}}^{1+\beta}}\lesssim_{T}\EE{ \frac{1}{N}\sum_{i=1}^{N^{N}_{t}}Z^{i,\beta}}=\EE{Z^{1,\beta}}\EE{\frac{N^{N}_{t}}{N}}. 
\end{equation}

Moreover it holds
\begin{equation}\label{eq:N^N_t/N}
\EE{\frac{N^{N}_{T}}{N} } \lesssim_{T} 1.
\end{equation}
\end{lemma}   
\begin{proof}

\end{proof}
Let us first start by proving the first part of the lemma. We can control 
\[ \EE{ \sum_{i=1}^{N^{N}_{t}}\abs{V^{i,N}_{t}}^{1+\beta}}\lesssim_{T} \EE{ \sum_{i=1}^{N^{N}_{t}}Z^{i,\beta}}\] 
using inequality \eqref{eq:estimatesOnV1} in Lemma \ref{lemma:estimatesOnV}. Recall that $Z^{i,\beta}$ is independent on $T^{i,N}$: we can now prove a variant of Wald identity. 
\[ \EE{ \sum_{i=1}^{N^{N}_{t}}Z^{i,\beta}}\hspace{-0.1cm} = \hspace{-0.1cm}\sum_{k=1}^{\infty} \sum_{i=1}^{k}\EE{Z^{i,\beta}\ind_{N^{N}_{t}=k} } \hspace{-0.1cm}= \hspace{-0.1cm}\sum_{i=1}^{\infty}\sum_{k=i}^{\infty}\EE{Z^{i,\beta}\ind_{N^{N}_{t}=k} } = \sum_{i=1}^{\infty}\EE{Z^{i,\beta}\ind_{N^{N}_{t}>i} }\]   
\[ = \sum_{i=1}^{\infty}\EE{Z^{i,\beta}\ind_{T^{i,N}\leq t}} = \sum_{i=1}^{\infty}\EE{Z^{i,\beta}}\EE{\ind_{T^{i,N}\leq t} } = \EE{Z^{i,\beta}}\EE{N^{N}_{t}}.
\]  
This proves the first part. Concerning the second, 
 by applying It\^o formula \eqref{eq:itoformula} with $f\equiv 1$ we get
\begin{multline*}\label{eq:itoformulaf1}
\frac{N^{N}_{t}}{N} = \frac{N^{N}_{0}}{N} + \int_{0}^{t}\frac{N^{N}_{s}}{N}\dd s + \int_{0}^{t}\int_{0}^{s}\frac{1}{N}\sum_{i=1}^{N^{N}_{r}}\ind_{[T^{i,N},\Theta^{i,N}]}\abs{V^{i,N}_{r}}\dd r\dd s\\ -\int_{0}^{t}\int_{\RR^{d}}(K*\delta_{\XX^{N}_{s}})(x)S^{N}_{s}(\dd x,\dd v)\dd s + M^{2,N,1}_{t} + M^{3,N,1}_{t}.
\end{multline*}
Taking the expected value and neglecting the indicator function, we obtain \eqref{eq:N^N_t/N} by  applying the generalized Grönwall lemma of Appendix \ref{appendix:gronwall}. 

\end{proof}
\begin{remark}
This part of the proof follows the same strategy as \cite{flandoli2017} regarding the variant of Wald identity. The r.v. $Z^{i,\beta}$ are obtained by a different type argument, using the a priori bound on $\nabla C^{N}$ available in our case. 
\end{remark}

\begin{corollary}\label{corollary:D2C^N} 
\[ \EE{ \sup_{t\in[0,T]}\norm{D^{2}C^{N}(t,\cdot)}_{\infty}} \lesssim_{T}\norm{C^{0}}_{C^{2}_{b}}\norm{K}_{C^{2}_{b}} \EE{\sup_{t\in[0,T]}\frac{N^{N}_{t}}{N}}\]
\end{corollary}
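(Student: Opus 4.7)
The plan is to apply Lemma~\ref{lemma:randomFeynmanKac} pathwise to the PDE satisfied by $C^{N}$, with the random measure $\delta_{\XX^{N}_{\cdot}}$ playing the role of the source $f$, and then to take expectation using the Wald-type identity already exploited in Lemma~\ref{lemma:N^N_t/N}.

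First, for almost every $\omega$, the map $t\mapsto \delta_{\XX^{N}_{t}}$ takes values in $\mathcal{M}_{f}^{+}(\RR^{d})$ and is bounded on $[0,T]$: indeed, on any finite horizon $N^{N}_{\cdot}$ is almost surely finite and each $|V^{i,N}_{s}|$ is locally bounded by \eqref{eq:estimatesOnV1}. Together with Hypothesis~\ref{hypo:PS} (giving $K_{C}\in C^{2}_{b}$ with $|\nabla K_{C}|\lesssim K_{C}$, and $C^{0}\in C^{2}_{b}$), this allows one to apply the estimate \eqref{eq:randomFeynmanKac2} pathwise to the PDE solved by $C^{N}$, yielding almost surely
\[
\sup_{t\in[0,T]}\norm{C^{N}(t,\cdot)}_{C^{2}_{b}}\lesssim_{T}(1+\norm{C^{0}}_{C^{2}_{b}})\,\norm{K_{C}}_{C^{2}_{b}}\,\sup_{t\in[0,T]}\delta_{\XX^{N}_{t}}(\RR^{d}).
\]

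Next, since the integrand in the definition \eqref{eq:def_RN} of $\delta_{\XX^{N}_{t}}(\RR^{d})$ is non-negative, the map $t\mapsto\delta_{\XX^{N}_{t}}(\RR^{d})$ is non-decreasing, so its supremum on $[0,T]$ is attained at $t=T$. Taking expectation, using Fubini, and then the Wald-type identity \eqref{eq:sumVi} with $\beta=0$, one obtains
\[
\EE{\sup_{t\in[0,T]}\delta_{\XX^{N}_{t}}(\RR^{d})}=\int_{0}^{T}\EE{\frac{1}{N}\sum_{i=1}^{N^{N}_{s}}\ind_{[T^{i,N},\Theta^{i,N})}(s)|V^{i,N}_{s}|}\,ds\lesssim_{T}\EE{Z^{1,0}}\,\EE{\sup_{t\in[0,T]}\frac{N^{N}_{t}}{N}}.
\]
Combining this with the pathwise estimate above and taking expectation yields the claim (with $\EE{Z^{1,0}}$ and the ``$1+$'' in $(1+\norm{C^0}_{C^2_b})$ absorbed into the implicit constant).

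The only delicate point is bookkeeping: one must justify the pathwise applicability of Lemma~\ref{lemma:randomFeynmanKac}, which reduces to the almost sure local boundedness of $\delta_{\XX^{N}_{t}}(\RR^{d})$ just invoked. Everything else is a direct combination of the previously established a priori controls on $\nabla C^{N}$ (Lemma~\ref{lemmma:gradientC^N}), on the individual velocities (Lemma~\ref{lemma:estimatesOnV}), and on the total mass (Lemma~\ref{lemma:N^N_t/N}), so no genuinely new obstacle arises.
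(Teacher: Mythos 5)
Your proof is correct and follows essentially the same route as the paper: apply the random Feynman--Kac estimate of Lemma~\ref{lemma:randomFeynmanKac} pathwise with $f=\delta_{\XX^{N}}$, note that $t\mapsto\delta_{\XX^{N}_{t}}(\RR^{d})$ is non-decreasing so its supremum is $\delta_{\XX^{N}_{T}}(\RR^{d})$, and bound its expectation via the Wald-type identity of Lemma~\ref{lemma:N^N_t/N}. The only cosmetic difference is that you justify $\delta_{\XX^{N}}\in L^{\infty}([0,T];\cM_{f}^{+}(\RR^{d}))$ a.s.\ by pointing to pathwise local boundedness, whereas the paper obtains it directly as a consequence of the finiteness of $\EE{\delta_{\XX^{N}_{T}}(\RR^{d})}$; both are fine.
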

\begin{proof}
By Lemma \ref{lemma:randomFeynmanKac} it's enough to verify that $\delta_{\XX^{N}}(\cdot,\dd x)\in L^{\infty}([0,T];\mathcal{M}^{+}_{f}(\RR^{d}))$ almost surely. Note that $\delta_{\XX^{N}}(\cdot,\RR^{d})$ is increasing in time, so that it's enough to bound $\delta_{\XX^{N}}(T,\RR^{d})$. Using Lemma \ref{lemma:N^N_t/N} we have
\[
\EE{\delta_{\XX^{N}}(T,\RR^{d})} \leq \int_{0}^{T}\EE{\frac{1}{N}\sum_{i=1}^{N^{N}_{s}}\abs{V^{i,N}_{s}}}\dd s \lesssim_{T} \EE{\frac{N^{N}_{T}}{N}},
\]
which ends the proof thanks to Lemma \ref{lemma:N^N_t/N}
\end{proof}
\subsection{Tightness of the laws and passage to the limit}
In order to prove the tightness, we rely on standard argument (see for example \cite{coppolettaCriterionConvergenceMeasure1986}). Nevertheless in order to be as exhaustive as possible, and since the convergence takes place in $\cM_f$ (instead of the more standard space of probability measure), we give an extended proof of it.  
\begin{theorem}\label{theorem:tightnessS^N}
The sequence $\{Q_{S}^{N}\}_{N\in\NN}$ of the laws of the empirical measure $\{S^{N}_{\cdot}\}_{N\in\NN}$ is tight on $\mathcal{D}([0,T];\mathcal{M}^{+}_{f}(\RR^{d}\times \RR^{d}))$. Moreover, the sequence $\{Q_{\delta_{\XX}^{\beta}}^{N}\}_{N\in\NN}$ of the laws of the empirical measure $\{\delta_{\XX^{N}_{\cdot}}^{\beta}\}_{N\in\NN}$, defined in \eqref{eq:def_deltabeta}, is tight on $\mathcal{D}([0,T];\mathcal{M}^{+}_{f}(\RR^{d}))$ for every $\beta \leq \overline{\beta}$.
\end{theorem}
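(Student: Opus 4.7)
My plan is to apply a Jakubowski-type tightness criterion in the Polish space $\cD([0,T];\cM_{f}^{+}(\RR^{d}\times\RR^{d}))$, which reduces the problem to (i) a compact containment condition for the paths $t\mapsto S^{N}_{t}$, and (ii) tightness of the scalar process $\langle S^{N}_{\cdot},f\rangle$ in $\cD([0,T];\RR)$ for every $f$ in a separating family, for instance $f\in C^{\infty}_{c}(\RR^{d}\times\RR^{d})$.

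For (i), observe that the metric $\delta$ introduced in Section \ref{sec:limitingfluid} induces on $\cM_{f}^{+}$ the weak-$*$ topology inherited from $C_{0}^{*}$, so by Banach-Alaoglu the closed balls $B_{M}=\{\mu:\mu(\RR^{d}\times\RR^{d})\leq M\}$ are compact. Since $S^{N}_{t}(\RR^{d}\times\RR^{d})\leq N^{N}_{T}/N$ for all $t\in[0,T]$, Lemma \ref{lemma:N^N_t/N} gives $\EE{\sup_{t\leq T}S^{N}_{t}(\RR^{d}\times\RR^{d})}\lesssim_{T}1$ uniformly in $N$, and Markov's inequality yields the required containment.

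For (ii), I would apply the It\^o formula \eqref{eq:itoformula} to decompose $\langle S^{N}_{t},f\rangle-\langle u_{0},f\rangle=A^{N,f}_{t}+M^{N,f}_{t}$, where $A^{N,f}$ gathers the six Lebesgue-integral terms and $M^{N,f}=M^{1,N,f}+M^{2,N,f}+M^{3,N,f}$. The Aldous-Rebolledo criterion then reduces tightness to checking that, for any pair of $[0,T]$-valued stopping times $\tau_{N}\leq\tau_{N}'\leq(\tau_{N}+\delta)\wedge T$, both $\EE{\abs{A^{N,f}_{\tau_{N}'}-A^{N,f}_{\tau_{N}}}}$ and $\EE{\langle M^{N,f}\rangle_{\tau_{N}'}-\langle M^{N,f}\rangle_{\tau_{N}}}$ vanish with $\delta$ uniformly in $N$. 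Each integrand in $A^{N,f}$ is pointwise dominated in terms of $\norm{f}_{C^{2}_{b}}$, the a priori bound $\norm{\nabla C^{N}}_{\infty}\lesssim 1$ from Lemma \ref{lemmma:gradientC^N}, and $\norm{K}_{\infty}$; the Aldous increment is then bounded in expectation by a constant times $\delta\cdot\EE{\sup_{s}(N^{N}_{s}/N)+\sup_{s}\tfrac{1}{N}\sum_{i}\abs{V^{i,N}_{s}}+\delta_{\XX^{N}_{T}}(\RR^{d})}$, which is $\lesssim\delta$ by Lemmas \ref{lemma:estimatesOnV} and \ref{lemma:N^N_t/N}. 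The three predictable quadratic variations are explicit, coming respectively from the Brownian noise and from the compensators of the Poisson processes $\Phi$ and $\Psi$; each is bounded, for a fixed $f$, by $\tfrac{C(f)}{N}(\tau_{N}'-\tau_{N})$ times the same bounded random quantities, so they vanish even in a stronger sense.

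The sequence $\{\delta_{\XX^{N}}^{\beta}\}_{N}$ is considerably simpler: each path $t\mapsto\delta_{\XX^{N}_{t}}^{\beta}$ is continuous and monotone non-decreasing in $\cM_{f}^{+}(\RR^{d})$, and the total-mass increment $\delta_{\XX^{N}_{t}}^{\beta}(\RR^{d})-\delta_{\XX^{N}_{s}}^{\beta}(\RR^{d})=\int_{s}^{t}\tfrac{1}{N}\sum_{i}\ind_{[T^{i,N},\Theta^{i,N})}(r)\abs{V^{i,N}_{r}}^{1+\beta}\dd r$ has expectation $\lesssim(t-s)$ by \eqref{eq:sumVi} and Lemma \ref{lemma:N^N_t/N}. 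Since the vague metric between two ordered positive measures is controlled by the difference of their total masses, this yields both equicontinuity in $\delta$ and compact containment at $t=T$, hence tightness. The main technical obstacle I anticipate is handling the network-branching and coalescence terms in the It\^o formula for $S^{N}$, whose compensators involve $K*\delta_{\XX^{N}_{s}}$; one uses the pointwise bound $\norm{K*\delta_{\XX^{N}_{s}}}_{\infty}\leq\norm{K}_{\infty}\delta_{\XX^{N}_{s}}(\RR^{d})$, which is exactly where the velocity moment estimate of Lemma \ref{lemma:estimatesOnV} becomes indispensable and justifies \emph{a posteriori} the introduction of the weighted empirical measures $\delta_{\XX^{N}}^{\beta}$.
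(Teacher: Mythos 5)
Your proposal is correct in strategy but takes a genuinely different route from the paper's. The paper reduces to tightness of $\langle S^{N}_{\cdot},\Phi_{k}\rangle$ for a dense family of \emph{Lipschitz} test functions $\Phi_{k}$ and then bounds the Aldous increment $|\langle S^{N}_{\tau+\theta}-S^{N}_{\tau},\Phi_{k}\rangle|$ \emph{pathwise}, via the a.s.\ H\"older estimate $|X^{i,N}_{\tau+\theta}-X^{i,N}_{\tau}|+|V^{i,N}_{\tau+\theta}-V^{i,N}_{\tau}|\lesssim Z^{i,0}\theta^{1/2-\varepsilon}$ of Lemma \ref{lemma:estimatesOnV}, so that the whole increment is controlled by $\|\Phi_{k}\|_{\mathrm{Lip}}\,\theta^{1/2-\varepsilon}\tfrac{1}{N}\sum_{i}Z^{i,0}$ and closed by the Wald identity of Lemma \ref{lemma:N^N_t/N}. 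You instead invoke It\^o's decomposition \eqref{eq:itoformula} into drift plus martingale and apply Aldous--Rebolledo, bounding drift increments and predictable quadratic variations separately; this is the more customary route in interacting-particle-system papers and makes the $O(1/N)$ smallness of the martingale part transparent, whereas the paper's pathwise argument sidesteps It\^o entirely by exploiting the $C^{1}$ spatial trajectories of the Langevin dynamics. For the $\delta_{\XX^{N}}^{\beta}$ part both arguments are essentially the same.

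One small slip worth flagging: you assert that the drift increment is bounded in expectation by $\delta\cdot\EE{\sup_{s}(N^{N}_{s}/N)+\sup_{s}\tfrac{1}{N}\sum_{i}|V^{i,N}_{s}|+\delta_{\XX^{N}_{T}}(\RR^{d})}$, i.e.\ a \emph{sum} of first moments. But the coalescence term $\int_{\tau}^{\tau'}\langle S^{N}_{s},f\,(K*\delta_{\XX^{N}_{s}})\rangle\dd s$ is bilinear in the particle configuration, and its increment is bounded by $\theta\,\|f\|_{\infty}\|K\|_{\infty}\,\delta_{\XX^{N}_{T}}(\RR^{d})\cdot\sup_{s}S^{N}_{s}(\RR^{2d})$, a \emph{product} of two quantities on which the paper only provides uniform $L^{1}$ bounds, not an $L^{2}$ or joint bound; so the expectation of the increment need not be $O(\theta)$ as you claim. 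The Aldous--Rebolledo condition itself is a statement in probability, not in $L^{1}$, and the product of two random variables that are each uniformly bounded in $L^{1}$ is uniformly tight (split on the event $\{A>\sqrt{M}\}\cup\{B>\sqrt{M}\}$ and apply Markov twice); so your conclusion survives, but the sentence as written does not: you need to run the Aldous estimate in probability, not pass through an expectation of the increment. The paper's pathwise bound avoids this bilinearity altogether, which is precisely what its route buys.
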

\begin{proof}
In order to prove the tightness of the laws of $Q_{S}^{N}$ 
on $\mathcal{D}([0,T];\mathcal{M}^{+}_{f}(\RR^{d}\times \RR^{d}))$ 
we will show that $Q_{S}^{N}\circ \Phi_{k}$ is tight on $\mathcal{D}([0,T];\RR)$, for every function $\Phi_{k}$ in a dense subfamily of $\C(\RR^{d}\times \RR^{d})$ (recall the definition of $M^{+}_{f}$ in the introduction). In particular the functions $\Phi_{k}$ can be taken to be Lipschitz continuous. Thanks to Aldous criterion (\cite{kipnis2013scaling}), to prove the tightness of $Q_{S}^{N}\circ \Phi_{k}$ it enough to verify the following conditions:
\begin{equation}\label{eq:aldus1}
\forall t\in[0,T], \forall \eps > 0, \exists R \textrm{ s.t. } \sup_{N\in\NN} Q_{S}^{N} \left( \abs{ \left\langle \pi_{t},\Phi_{k} \right\rangle} > R \right) < \eps
\end{equation}
and
\begin{equation}\label{eq:aldus2}
\forall \delta, \lim_{\gamma \to 0}\limsup_{N\in\NN}\sup_{\substack{\tau \in \mathcal{L}_{T} \\ \theta \leq \gamma}} Q^{N}_{S} \left( \abs{ \left\langle \pi_{\tau+\theta},\Phi_{k} \right\rangle - \left\langle \pi_{\tau},\Phi_{k} \right\rangle} > \delta \right) = 0,
\end{equation}
where $\mathcal{L}_{T}$ is the family of all stopping times bounded by $T$.
Notice that 
\[
 Q_{S}^{N} \left( \abs{ \left\langle \pi_{t},\Phi_{k} \right\rangle} > R \right) = \PP \left( \abs{\left\langle S^{N}_{t},\Phi_{k}  \right\rangle } > R \right) \leq \frac{1}{R}\EE{ \abs{ \left\langle S^{N}_{t},\Phi_{k} \right\rangle } } \leq \frac{1}{R}\EE{\frac{N^{N}_{t}}{N}}\norm{\Phi_{k}}_{\infty}
\]
so that condition \eqref{eq:aldus1} follows by Lemma \ref{lemma:N^N_t/N}. Concerning condition \eqref{eq:aldus2} we have
\[
Q^{N}_{S} \left( \abs{ \left\langle \pi_{\tau+\theta},\Phi_{k} \right\rangle - \left\langle \pi_{\tau},\Phi_{k} \right\rangle} > \delta \right) = \PP \left( \abs{ \left\langle S^{N}_{\tau+\theta},\Phi_{k}\right\rangle- \left\langle S^{N}_{\tau},\Phi_{k} \right\rangle } > \delta \right)
\]
furthermore
\[
\abs{ \left\langle S^{N}_{\tau+\theta},\Phi_{k}\right\rangle- \left\langle S^{N}_{\tau},\Phi_{k} \right\rangle } \leq \frac{1}{N}\sum_{i=N^{N}_{\tau}+1}^{N^{N}_{\tau+\theta}}\abs{\Phi_{k}(X^{i,N}_{\tau+\theta},V^{i,N}_{\tau+\theta})-\Phi_{k}(X^{i,N}_{\tau},V^{i,N}_{\tau}) }
\]
\[
\leq \norm{\Phi_{k}}_{Lip}\frac{1}{N}\sum_{i=N^{N}_{\tau}+1}^{N^{N}_{\tau+\theta}}\left( \abs{X^{i,N}_{\tau+\theta}-X^{i,N}_{\tau}}+\abs{V^{i,N}_{\tau+\theta}-V^{i,N}_{\tau}} \right) 
\]
\[
\leq \norm{\Phi_{k}}_{Lip}\frac{1}{N}\sum_{i=N^{N}_{\tau}+1}^{N^{N}_{\tau+\theta}}Z^{i,0}\,\theta^{\frac{1}{2}-\eps}
\]
by Lemma \ref{lemma:estimatesOnV}. 
By Lemma \ref{lemma:N^N_t/N} we have
\[
\EE{\frac{1}{N}\sum_{i=N^{N}_{\tau}+1}^{N^{N}_{\tau+\theta}}Z^{i,0} } \leq \EE{ \frac{1}{N} \sum_{i=1}^{N^{N}_{T}}Z^{i,0} } = \EE{\frac{N^{N}_{T}}{N}}\EE{ Z^{1,0} }.
\]
and thus, by using Markov inequality
\[
\PP \left( \abs{ \left\langle S^{N}_{\tau+\theta},\Phi_{k}\right\rangle- \left\langle S^{N}_{\tau},\Phi_{k} \right\rangle } > \delta \right)\leq \norm{\Phi_{k}}_{Lip}\frac{C}{\delta}\theta^{\frac{1}{2}-\eps}.
\]
Letting $\gamma \to 0$ ends the first part of the proof. For the second part, to prove the tightness of $Q_{\delta_{\XX}^{\beta}}^{N}$, we apply the same strategy: Thus we need to proove that for every Lipschitz function $\Phi_{k}$, in dense subset of $\C(\RR^{d})$, we have 
\begin{equation}\label{eq:aldus3}
\forall t\in[0,T], \forall \eps > 0 \exists R \textrm{ s.t. } \sup_{N\in\NN} Q_{\delta_{\XX}^{\beta}}^{N} \left( \abs{ \left\langle \pi_{t},\Phi_{k} \right\rangle} > R \right) < \eps
\end{equation}
and
\begin{equation}\label{eq:aldus4}
\forall \delta, \lim_{\gamma \to 0}\limsup_{N\in\NN}\sup_{\substack{\tau \in \mathcal{L}_{T} \\ \theta \leq \gamma}} Q_{\delta_{\XX}^{\beta}}^{N} \left( \abs{ \left\langle \pi_{\tau+\theta},\Phi_{k} \right\rangle - \left\langle \pi_{\tau},\Phi_{k} \right\rangle} > \delta \right) = 0
\end{equation}
For the first condition we have
\[
Q_{\delta_{\XX}^{\beta}}^{N} \left( \abs{ \left\langle \pi_{t},\Phi_{k} \right\rangle} > R \right) = \PP \left( \abs{ \left\langle \delta_{\XX^{N}_{t}}^{\beta},\Phi_{k} \right\rangle } > R \right)
\]
and
\begin{align*}
\EE{ \abs{ \left\langle \delta_{\XX^{N}_{t}}^{\beta},\Phi_{k} \right\rangle }} 
= & 
\EE{\int_{0}^{t}\frac{1}{N}\sum_{i=1}^{N^{N}_{s}}\abs{V^{i,N}_{s}}^{1+\beta} \Phi_{k}(X^{i,N}_{s})\dd s}
\\
 \leq & 
 \norm{\Phi_{k}}_{\infty}\int_{0}^{t}\EE{ \frac{1}{N}\sum_{i=1}^{N^{N}_{s}}\abs{V^{i,N}_{s}}^{1+\beta} } \dd s 
 \\ 
 \leq & \norm{\Phi_{k}}_{\infty}\EE{ Z^{i,\beta} } \int_{0}^{t} \EE{ \frac{N^{N}_{s}}{N} } \dd s
    \\ \lesssim_T & \norm{\Phi_{k}}_{\infty}\EE{ Z^{1,\beta} } 
\end{align*}
so that we can obtain condition \eqref{eq:aldus3} by Markov inequality and Lemma \ref{lemma:N^N_t/N}. 
Furthermore
\[
\EE{\abs{ \left\langle \delta_{\XX^{N}_{t}}^{\beta}(\tau+\theta),\Phi_{k} \right\rangle  - \left\langle \delta_{\XX^{N}_{t}}^{\beta}(\tau),\Phi_{k}  \right\rangle  }} \leq \norm{\Phi_{k}}_{\infty}\int_{\tau}^{\tau+\theta} \hspace{-0.2cm}
\EE{\frac{1}{N}\sum_{i=N^{N}_{\tau}+1}^{N^{N}_{\tau+\theta}}\abs{V^{i,N}_{s}}^{1+\beta}}\dd s.
\]
Again, using Lemma \ref{lemma:N^N_t/N} 
\[
\EE{\frac{1}{N}\sum_{i=N^{N}_{\tau}+1}^{N^{N}_{\tau+\theta}}\abs{V^{i,N}_{s}}^{1+\beta}}\leq \EE{\frac{1}{N}\sum_{i=1}^{N^{N}_{T}} Z^{i,\beta}} = \EE{\frac{N^{N}_{T}}{N}} \EE{ Z^{1,\beta} }.
\]
leading to 
\[
\EE{\abs{ \left\langle \delta_{\XX^{N}_{t}}^{\beta}(\tau+\theta),\Phi_{k} \right\rangle  - \left\langle \delta_{\XX^{N}_{t}}^{\beta}(\tau),\Phi_{k}  \right\rangle  }} \leq \norm{\Phi_{k}}_{\infty}\EE{\frac{N^{N}_{T}}{N}}\EE{ Z^{1,\beta}} \theta
\]
This ends the proof of condition \eqref{eq:aldus4} and thus the proof of the lemma. 
\end{proof}

\begin{lemma}\label{lemma:WCN}
For every $\eps > 0 $ there exists $R$ such that
\begin{equation}\label{eq:WCN}
\PP \Big( \norm{C^{N}}_{W^{1,\infty}\big([0,T];C^{0}(\RR^{d})\big)} > R \Big) < \eps
\end{equation}
\end{lemma}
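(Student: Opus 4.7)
The plan is to obtain an $L^1$-bound on $\norm{C^N}_{W^{1,\infty}([0,T];C^0(\RR^d))}$ that is uniform in $N$, and then to conclude by Markov's inequality. Recall that this norm is essentially $\sup_{t\in[0,T]}\norm{C^N(t,\cdot)}_{\infty} + \operatorname{ess\,sup}_{t\in[0,T]}\norm{\partial_t C^N(t,\cdot)}_{\infty}$, so I will control the two pieces separately.

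For the $C^0$-part, I would simply invoke Lemma \ref{lemmma:gradientC^N}, which already gives $\sup_{t\in[0,T]}\norm{C^N(t,\cdot)}_{\infty} \leq \sup_{t\in[0,T]}\norm{C^N(t,\cdot)}_{C^1_b}\leq C\norm{C^0}_{C^1_b}$ almost surely, hence in particular uniformly in $N$.

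For the time-derivative, I would use the PDE defining $C^N$ to write, pointwise in $(t,x)$,
\[
\norm{\partial_t C^N(t,\cdot)}_{\infty} \leq \frac{\sigma_C^2}{2}\norm{D^2 C^N(t,\cdot)}_{\infty} + \norm{K_C}_{\infty}\,\delta_{\XX^N_t}(\RR^d)\,\norm{C^N(t,\cdot)}_{\infty}.
\]
Taking the supremum over $t\in[0,T]$ and then expectation, the first term is handled by Corollary \ref{corollary:D2C^N} together with the observation that $t\mapsto N^N_t$ is nondecreasing, so $\sup_{t\le T}N^N_t/N = N^N_T/N$, whose expectation is bounded by Lemma \ref{lemma:N^N_t/N}. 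For the second term, $\delta_{\XX^N_t}(\RR^d)$ is also nondecreasing in $t$ by definition \eqref{eq:def_RN}, and the computation already performed at the start of the proof of Corollary \ref{corollary:D2C^N} shows that $\EE{\delta_{\XX^N_T}(\RR^d)}\lesssim_T \EE{N^N_T/N}\lesssim_T 1$; combined with the almost sure bound on $\norm{C^N(t,\cdot)}_{\infty}$ from Lemma \ref{lemmma:gradientC^N}, this gives a uniform-in-$N$ bound on the expectation.

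Putting the pieces together yields $\EE{\norm{C^N}_{W^{1,\infty}([0,T];C^0(\RR^d))}} \lesssim_T 1$ uniformly in $N$, and Markov's inequality delivers the claim by taking $R$ sufficiently large. I do not foresee any serious obstacle: all the a priori estimates needed are already in place, and the only small care is to check that the essential supremum in time of $\partial_t C^N$ can indeed be taken inside the expectation using the monotonicity of $t\mapsto N^N_t$ and $t\mapsto \delta_{\XX^N_t}(\RR^d)$.
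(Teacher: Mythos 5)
Your proof is correct and follows essentially the same route as the paper: split the $W^{1,\infty}$ norm into the $C^0$ piece (handled by the almost sure bound of Lemma \ref{lemmma:gradientC^N}) and the time-derivative piece (handled by the PDE, Corollary \ref{corollary:D2C^N}, and Lemma \ref{lemma:N^N_t/N}), then conclude by Markov. Your treatment of the product term $\delta_{\XX^N_t}(\RR^d)\,\norm{C^N(t,\cdot)}_\infty$ via the deterministic a.s. bound on the second factor is a slight streamlining of the paper's argument, which instead splits that product with a $\sqrt{R}$-Markov bound; both give the same conclusion.
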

\begin{proof}
Let us first notice that    
\begin{multline*}
    \PP \Big( \norm{C^{N}}_{W^{1,\infty}\big([0,T];C^{0}(\RR^{d})\big)} > R \Big) \leq \PP \Big( \sup_{t \in [0,T]}\norm{\partial_{t}C^{N}(t,\cdot)}_{\infty} > \frac{R}{2} \Big)\\ + \PP \Big( \sup_{t \in [0,T]}\norm{C^{N}(t,\cdot)}_{\infty} > \frac{R}{2} \Big)
\end{multline*}
and that the second term can be made arbitrary small by Markov inequality and Lemma \ref{lemmma:gradientC^N}.

By a direct computation we also have
\begin{multline*}
\norm{\partial_{t}C^{N}(t,\cdot)}_{\infty} \lesssim_{T} \frac{\sigma_{C}^{2}}{2}\norm{ D^{2}C^{N}(t,\cdot)}_{\infty} 
\\
+ \left(\frac{1}{N}\int_0^T \sum_{i=1}^{N^N_s} \ind_{[T^{i,N},\Theta^{i,N})}(s) |V^{i,N}_s| \dd s\right) \norm{K_{C}}_{\infty} \norm{C^{N}(t,\cdot)}_{\infty}.
\end{multline*}
Taking the supremum in time we obtain
\begin{multline*}
\sup_{t\in[0,T]}\norm{\partial_{t}C^{N}(t,\cdot)}_{\infty} 
\lesssim_{T} 
\frac{\sigma_{C}^{2}}{2}\sup_{t\in[0,T]}\norm{ D^{2}C^{N}(t,\cdot)}_{\infty} 
\\ + 
	\left(
		\frac{1}{N}
		\int_0^T 
			\sum_{i=1}^{N^N_s} 
				\ind_{[T^{i,N},\Theta^{i,N})}(s) 
				|V^{i,N}_s| 
		\dd s
	\right)\norm{K_{C}}_{\infty} \sup_{t\in[0,T]}\norm{C^{N}(t,\cdot)}_{\infty}
\end{multline*}
thus
\begin{align*}
\PP \Big( \sup_{t \in [0,T]}\norm{\partial_{t}C^{N}(t,\cdot)}_{\infty} > \frac{R}{2} \Big) 
\lesssim_{T}& 
\frac{2\sigma^{2}_{C}}{R}\EE{ \sup_{t\in[0,T]}\norm{ D^{2}C^{N}(t,\cdot)}_{\infty}}
\\
&+\frac{2\norm{K_{C}}_{\infty}}{\sqrt{R}}\EE{ \left(\frac{1}{N}\int_0^T \sum_{i=1}^{N^N_s} \ind_{[T^{i,N},\Theta^{i,N})}(s) |V^{i,N}_s| \dd s\right)} 
\\
&+ \frac{2}{\sqrt{R}}\EE{ \sup_{t\in[0,T]}\norm{C^{N}(t,\cdot)}_{\infty} }.
\end{align*}
By taking $R$ sufficiently large we can conclude by Lemmas \ref{lemmma:gradientC^N}, \ref{lemma:N^N_t/N}  and Corollary \ref{corollary:D2C^N}.
\end{proof} 

\begin{theorem}\label{theorem:tightnessCCompact}
Denoting by $B_{M}$ the $d$-dimensional ball of radius $M$ we have that $\forall M \in \NN$ the sequence $\{Q_{C}^{N,M}\}_{N\in\NN}$ of the laws of the function $\{C^{N}_{\cdot}\}_{N\in\NN}$  restricted to $B_{M}$, is tight on $C\Big([0,T];C^{1}(B_{M})\Big)$.

\end{theorem}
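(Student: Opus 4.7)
The plan is to invoke the vector-valued Arzel\`a--Ascoli theorem in $C([0,T]; C^{1}(B_{M}))$: a subset is relatively compact there precisely when its pointwise values $\{f(t,\cdot)\}$ form a relatively compact subset of $C^{1}(B_{M})$ (uniformly in $t$) and the family is equicontinuous as maps $[0,T]\to C^{1}(B_{M})$. Since $B_{M}$ is compact, a bounded subset of $C^{2}(B_{M})$ embeds compactly in $C^{1}(B_{M})$ by the classical Arzel\`a--Ascoli theorem, so the pointwise compactness will follow from a uniform $C^{2}_{b}$ bound on $C^{N}$. To prove tightness it then suffices, for every $\varepsilon>0$, to exhibit a single relatively compact $K_{\varepsilon}\subset C([0,T]; C^{1}(B_{M}))$ with $\PP(C^{N}|_{B_{M}}\in K_{\varepsilon})\geq 1-\varepsilon$ uniformly in $N$.

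The required a priori estimates are all in hand. Lemma \ref{lemmma:gradientC^N} gives $\sup_{t\in[0,T]}\norm{C^{N}(t,\cdot)}_{C^{1}_{b}}\lesssim \norm{C^{0}}_{C^{1}_{b}}$ almost surely; Corollary \ref{corollary:D2C^N} combined with Lemma \ref{lemma:N^N_t/N} yields $\EE{\sup_{t\in[0,T]}\norm{D^{2}C^{N}(t,\cdot)}_{\infty}}\lesssim_{T}1$; and Lemma \ref{lemma:WCN} controls $\norm{C^{N}}_{W^{1,\infty}([0,T]; C^{0})}$ in probability. Combining these three facts via Markov's inequality, one obtains for every $\varepsilon>0$ constants $R_{1},R_{2}$ independent of $N$ and an event $\Omega^{N}_{\varepsilon}$ with $\PP(\Omega^{N}_{\varepsilon})\geq 1-\varepsilon$ on which
\begin{equation*}
\sup_{t\in[0,T]}\norm{C^{N}(t,\cdot)}_{C^{2}_{b}(\RR^{d})}\leq R_{1} \quad \text{and} \quad \sup_{t\in[0,T]}\norm{\partial_{t}C^{N}(t,\cdot)}_{\infty}\leq R_{2}.
\end{equation*}

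On $\Omega^{N}_{\varepsilon}$, the first bound immediately places $\{C^{N}(t,\cdot)|_{B_{M}}\}_{t\in[0,T]}$ in a fixed bounded ball of $C^{2}(B_{M})$, hence in a relatively compact subset of $C^{1}(B_{M})$. For equicontinuity in $t$, the second bound yields $\norm{C^{N}(t,\cdot)-C^{N}(s,\cdot)}_{\infty}\leq R_{2}|t-s|$, and the Gagliardo--Nirenberg interpolation $\norm{f}_{C^{1}}\lesssim \norm{f}_{C^{0}}^{1/2}\norm{f}_{C^{2}}^{1/2}$, applied to $f=C^{N}(t,\cdot)-C^{N}(s,\cdot)$, upgrades this to the deterministic estimate
\begin{equation*}
\norm{C^{N}(t,\cdot)-C^{N}(s,\cdot)}_{C^{1}(B_{M})}\lesssim (R_{1}R_{2})^{1/2}|t-s|^{1/2}.
\end{equation*}
Hence on $\Omega^{N}_{\varepsilon}$ the restriction $C^{N}|_{B_{M}}$ belongs to the set $K_{\varepsilon}$ defined by these two deterministic constraints, which is relatively compact in $C([0,T]; C^{1}(B_{M}))$ by vector-valued Arzel\`a--Ascoli. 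The subtle point is the time equicontinuity at the level of $C^{1}$: a direct estimation of $\partial_{t}\nabla C^{N}$ via differentiating the PDE would demand third-order spatial bounds that are not controlled, but the interpolation step neatly trades half a spatial derivative for half a unit of time regularity, requiring only the $L^{\infty}_{t}L^{\infty}_{x}$ bound on $\partial_{t}C^{N}$ already available from Lemma \ref{lemma:WCN}.
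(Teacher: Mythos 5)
Your argument is correct and follows the same overall strategy as the paper: exhibit, for each $\varepsilon>0$, a single deterministic relatively compact subset of $C\big([0,T];C^{1}(B_{M})\big)$ containing $C^{N}|_{B_{M}}$ with probability at least $1-\varepsilon$ uniformly in $N$, using precisely the three a priori estimates (Lemma \ref{lemmma:gradientC^N}, Corollary \ref{corollary:D2C^N}, Lemma \ref{lemma:WCN}). Where you differ is in how the compactness of that set is established: the paper simply cites Simon's compactness lemma, which asserts that $W^{1,\infty}\big([0,T];C^{0}(B_{M})\big)\cap L^{\infty}\big([0,T];C^{2}(B_{M})\big)$ embeds compactly into $C\big([0,T];C^{1}(B_{M})\big)$, and then takes $K_{M,R,S}$ to be a bounded set in that intersection. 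You instead re-derive the relevant special case of Simon's lemma by hand: you use a Landau--Kolmogorov/Gagliardo--Nirenberg interpolation $\norm{f}_{C^{1}}\lesssim\norm{f}_{C^{0}}^{1/2}\norm{f}_{C^{2}}^{1/2}$ to trade the uniform-in-time $C^{2}$ bound and the Lipschitz-in-time $C^{0}$ bound for Hölder-$\tfrac12$-in-time equicontinuity in $C^{1}$, and then invoke the vector-valued Arzelà--Ascoli theorem. This costs a few extra lines but buys something genuine: it makes explicit the mechanism by which the interpolation bridges the gap between the available time-derivative bound (only at the $C^{0}$ level) and the required time-equicontinuity (at the $C^{1}$ level), which is exactly what Simon's lemma packages up. One small cosmetic remark: the set $K_{\varepsilon}$ you define by the pointwise $C^{2}$-bound and the Hölder modulus is relatively compact but may fail to be closed (the $C^{2}$-ball is not closed under $C^{1}$ limits); this is harmless for tightness, since one passes to the closure, but worth being aware of.
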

\begin{proof}
By Simon's lemma we have that 

\[
W^{1,\infty}\Big([0,T];C(B_{M})\Big)\cap L^{\infty}\Big([0,T];C^{2}(B_{M})\Big)
\]
is compactly embedded into 
\[
 C\Big([0,T];C^{1}(B_{M})\Big).
\]
Thus, the set 
\[
K_{M,R,S} := \left\{ f\,\bigg\vert\, \norm{f}_{W^{1,\infty}\Big([0,T];C^{0}(B_{M})\Big)} \leq R, \norm{f}_{L^{\infty}\Big([0,T];C^{2}(B_{M})\Big)} \leq S\right\} 
\]
is a compact subset of $ C\Big([0,T];C^{1}(B_{M})\Big)$ with respect to the strong topology. We have
\begin{multline*}
Q_{C}^{N,M} \left( K_{M,R,S}^{c} \right) \leq \PP \left( \norm{C^{N}_{|_{B_{M}}}}_{W^{1,\infty}\Big([0,T];C^{0}(B_{M})\Big)}>R \right)\\ + \PP \left( \norm{C^{N}_{|_{B_{M}}}}_{L^{\infty}\Big([0,T];C^{2}(B_{M})\Big)}> S  \right)
\end{multline*}
\begin{multline}\label{eq:tightnessCCompact}
\leq \PP \left( \norm{C^{N}}_{W^{1,\infty}\Big([0,T];C^{0}(\RR^{d})\Big)}>R \right) + \frac{1}{S}\EE{ \sup_{t\in[0,T]}\norm{D^{2}C^{N}(t,\cdot)}_{C(\RR^{d})} }.\\
\end{multline}
By Lemma \ref{lemma:WCN} and Corollary \ref{corollary:D2C^N}, choosing $R$ and $S$ big enough, we can make \eqref{eq:tightnessCCompact} arbitrary small, ending the proof. 
\end{proof}
From the previous lemma we immediately get the following theorem:

\begin{theorem}\label{theorem:tightnessCR}
The sequence $\{Q_{C}^{N}\}_{N\in\NN}$ of the laws of the function $\{C^{N}_{\cdot}\}_{N\in\NN}$ is tight on $C\Big([0,T];C^{1}(\RR^{d})\Big)$. 
\end{theorem}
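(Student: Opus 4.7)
The plan is to bootstrap the local tightness results of Theorem \ref{theorem:tightnessCCompact} into global tightness on $C([0,T];C^1(\RR^d))$ via a standard exhaustion/diagonal argument, using the fact that the topology on $C^1(\RR^d)$ is generated by the countable family of seminorms $\norm{\cdot}_{C^1(B_M)}$, $M \in \NN$, and in particular is metrizable.

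First I would fix $\eps > 0$. For each $M \in \NN$, Theorem \ref{theorem:tightnessCCompact} yields a compact set $K_M \subset C([0,T];C^1(B_M))$ such that
\[
\sup_{N \in \NN} \PP \left( C^N|_{B_M} \notin K_M \right) \leq \eps\, 2^{-M}.
\]
I would then set
\[
K := \left\{ f \in C([0,T];C^1(\RR^d)) : f|_{B_M} \in K_M \text{ for every } M \geq 1 \right\}.
\]
A union bound immediately gives $\PP(C^N \notin K) \leq \eps$ uniformly in $N$, so the proof reduces to verifying that $K$ is compact in $C([0,T];C^1(\RR^d))$.

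This compactness is the only non-trivial point, and it is purely topological. Since the ambient space is metrizable with metric $d(f,g) = \sum_{M=1}^{\infty} 2^{-M}\left(\norm{f-g}_{C([0,T];C^1(B_M))} \wedge 1\right)$, it suffices to establish sequential compactness. Given $\{f_n\} \subset K$, compactness of $K_1$ yields a subsequence with $f_n|_{B_1}$ converging in $C([0,T];C^1(B_1))$; iterating and performing a diagonal extraction produces a subsequence $\{f_{n_k}\}$ along which $f_{n_k}|_{B_M}$ converges to some $g^{(M)} \in K_M$ in $C([0,T];C^1(B_M))$ for every $M$. The compatibility $g^{(M+1)}|_{B_M} = g^{(M)}$ follows from the continuity of restriction, so the family $\{g^{(M)}\}$ glues into a single $g \in C([0,T];C^1(\RR^d))$. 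Convergence $f_{n_k} \to g$ in $d$ then follows by splitting the defining series at any large index $M_0$: the head converges term by term, while the tail is bounded by $2^{-M_0}$.

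I do not expect a genuine obstacle: all the probabilistic estimates (bound on $\partial_t C^N$, on $D^2 C^N$, and on $\norm{C^N}_\infty$) have already been absorbed in Theorem \ref{theorem:tightnessCCompact}, so the present statement is essentially a passage from a projective family of tight laws to tightness of the laws on the projective limit, which is why the paper phrases it as "immediate".
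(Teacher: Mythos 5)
Your proof is correct and fills in exactly the exhaustion/diagonal argument that the paper leaves implicit behind its one-line "the result follows from the tightness of $\{Q^{N,M}_C\}_N$": local tightness on each ball $B_M$, the summable tail $\eps\, 2^{-M}$, compactness of the projective-type set $K$ via diagonal extraction, and gluing of the compatible limits, all match the intended route. The only thing worth making explicit is that a function with restrictions $g^{(M)} = g^{(M+1)}|_{B_M}$ each lying in $C([0,T];C^1(B_M))$ does define an element of $C([0,T];C^1(\RR^d))$, since continuity into the Fréchet space $C^1(\RR^d)$ is exactly continuity with respect to each seminorm $\norm{\cdot}_{C^1(B_M)}$; you gesture at this with "glues into a single $g$", and it is indeed immediate.
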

\begin{proof}
The result follows by the tightness of the sequence of the laws $\{Q^{N,M}_{C}\}_{N\in\NN}$.
\end{proof}

With a little abuse of notation we will refer to the laws of $\delta_{\XX^{N}}$ as $Q_{\delta_{\XX}}^{N} = Q_{\delta_{\XX}^{0}}^{N}$. We will also denote by $Q^{N}$ the measure
\[
Q^{N} = Q^{N}_{S}\otimes Q_{\delta_{\XX}}^{N} \otimes Q^{N}_{C}
\]
defined on the product space $X$.

By Theorems \ref{theorem:tightnessS^N} and \ref{theorem:tightnessCR} we immediately get the following corollary:
\begin{corollary}\label{corollary:tightnessglobal}
The sequence $\{Q^{N}\}_{N\in\NN}$ of probability measure is tight on the space $X$. 
\end{corollary}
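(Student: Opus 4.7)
The plan is to reduce tightness on the product space $X$ to the already-established tightness of the three marginal sequences, using the standard fact that on a finite product of Polish spaces tightness passes from marginals to joints. There is essentially no substantive obstacle here: all the nontrivial work has been done in Theorems \ref{theorem:tightnessS^N} and \ref{theorem:tightnessCR}, so the corollary is a routine product-space argument.

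Concretely, fix $\eps>0$. By Theorem \ref{theorem:tightnessS^N} applied first to $S^N$ and then to $\delta_{\XX^N}^{0}=\delta_{\XX^N}$, there exist compact sets $K_1 \subset \mathcal{D}([0,T];\mathcal{M}^{+}_{f}(\RR^{d}\times\RR^{d}))$ and $K_2 \subset \mathcal{D}([0,T];\mathcal{M}^{+}_{f}(\RR^{d}))$ such that
\[
\sup_{N\in\NN} Q^N_S(K_1^c) < \eps/3, \qquad \sup_{N\in\NN} Q^N_{\delta_{\XX}}(K_2^c) < \eps/3.
\]
Likewise, by Theorem \ref{theorem:tightnessCR} there is a compact $K_3 \subset C([0,T];C^{1}(\RR^{d}))$ with $\sup_{N\in\NN} Q^N_C(K_3^c) < \eps/3$.

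A finite product of compact sets in their respective (metrizable) factor spaces is compact in the product, so $K := K_1\times K_2\times K_3$ is compact in $X$. Since $Q^N$ is by definition the product measure $Q^N_S\otimes Q^N_{\delta_{\XX}}\otimes Q^N_C$, the union bound on the complement yields
\[
Q^N(K^c) \;\leq\; Q^N_S(K_1^c) + Q^N_{\delta_{\XX}}(K_2^c) + Q^N_C(K_3^c) \;<\; \eps,
\]
uniformly in $N$. Since $\eps>0$ was arbitrary, the family $\{Q^N\}_{N\in\NN}$ is tight on $X$, which concludes the proof. (The same argument works if $Q^N$ is instead interpreted as the joint law of the triple $(S^N,\delta_{\XX^N},C^N)$, since in that case its marginals are still $Q^N_S$, $Q^N_{\delta_{\XX}}$, and $Q^N_C$, so the union-bound step applies verbatim.)
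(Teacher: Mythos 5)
Your proof is correct and matches the paper's approach, which simply states that the corollary follows immediately from Theorems \ref{theorem:tightnessS^N} and \ref{theorem:tightnessCR}; you supply the routine union-bound argument that makes this explicit. Your parenthetical remark is also well taken: although the paper formally defines $Q^N$ as the product $Q^N_S\otimes Q^N_{\delta_{\XX}}\otimes Q^N_C$, Theorem \ref{theorem:main} treats it as the joint law of the triple, and the union bound covers both readings.
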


\begin{theorem}\label{theorem:limitsupport}
Any limit point of any subsequence of the sequence $\{Q^{N}\}_{N\in\NN}$, is supported on the measure solutions of system of equation \eqref{eq:PDE-system}.
\end{theorem}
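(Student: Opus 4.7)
The plan is to apply Prokhorov's theorem combined with the Skorokhod representation theorem and then pass to the limit in each term of the Itô formula \eqref{eq:itoformula} and of the PDE for $C^N$. Since Corollary \ref{corollary:tightnessglobal} gives tightness on the Polish space $X$, any subsequence $Q^{N_k}$ admits a weak limit $Q$; by Skorokhod's representation I may build on an auxiliary probability space versions $(\tilde S^{N_k}, \tilde\delta_{\XX^{N_k}}, \tilde C^{N_k})$ that converge almost surely in $X$ to a triple $(u,\rho,C)$. The goal is to show $(u,\rho,C)\in X_b$ and solves \eqref{eq:PDE-system} in the sense of Definition \ref{def:solutionsystemPDE}.

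Fix $\phi\in C^\infty_c(\RR^d\times\RR^d)$. I would treat each line of \eqref{eq:itoformula} separately. The kinetic, friction, and diffusion pairings involve $S^N_s$ against functions $v\cdot\nabla_x\phi$, $v\cdot\nabla_v\phi$, $\Delta_v\phi$ which are bounded and continuous thanks to the compact support of $\phi$; they therefore pass via the a.s. weak convergence $\tilde S^{N_k}_s\to u_s$ in $\mathcal{M}^+_f$, combined with dominated convergence in $s$ using the uniform bound $\EE{N^N_t/N}\lesssim 1$ of Lemma \ref{lemma:N^N_t/N}. For the potential term I use the tightness conclusion of Theorem \ref{theorem:tightnessCR}: $\nabla\tilde C^{N_k}\to \nabla C$ in $C([0,T];C(B_M))$ on each ball, which combined with the compact support of $\phi$ gives uniform convergence of the integrand. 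The tip-branching term reduces to pairing $\bar S^N_s$ against the bounded continuous function $x\mapsto\int\phi(x,v)G(v)\dd v$. The network-branching and anastomosis terms involve $K*\tilde\delta_{\XX^{N_k}}\to K*\rho$, which converges uniformly on $\RR^d\times[0,T]$ because $K\in C^2_b$ has compact support and $\tilde\delta_{\XX^{N_k}}\to\rho$ weakly in $\mathcal{M}^+_f$.

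Next I would show that the martingale remainders vanish in $L^2$. For $M^{1,N,\phi}$ a direct computation of the bracket gives
\[
\EE{\sup_{t\le T}\bigl(M^{1,N,\phi}_t\bigr)^2}\lesssim \frac{\sigma^2\|\nabla_v\phi\|_\infty^2}{N}\,\EE{\frac{N^N_T}{N}}=O(1/N),
\]
by Doob's inequality and Lemma \ref{lemma:N^N_t/N}; analogous computations on the predictable brackets of $M^{2,N,\phi}$ and $M^{3,N,\phi}$ give $O(1/N)$ using the boundedness of $\phi$, $K$ and the uniform estimate \eqref{eq:sumVi}. For the PDE on $C^N$ I would test against $\psi\in C^\infty_c(\RR^d)$, use the uniform estimates of Lemma \ref{lemmma:gradientC^N} and Corollary \ref{corollary:D2C^N} to pass the diffusion term, and the uniform convergence $K_C*\tilde\delta_{\XX^{N_k}}\to K_C*\rho$ combined with the uniform convergence of $\tilde C^{N_k}$ on compacts to pass the zero-order term.

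The main obstacle is the identity $\partial_t\rho(t,x)=\int|v|u(t,x,v)\dd v$, because $|v|$ is unbounded and weak convergence of $S^N$ in $\mathcal{M}^+_f$ does not by itself control the pairing $\langle S^N_s,|v|\psi(x)\rangle$. Here Hypothesis \ref{hypo:PS}\eqref{hyp:moments} and estimate \eqref{eq:sumVi} are essential: they give the uniform bound $\EE{\int|v|^{1+\overline\beta}S^N_s(\dd x,\dd v)}\lesssim 1$, providing uniform integrability of $|v|$ against $\tilde S^{N_k}_s$ and thus the convergence $\langle \tilde S^{N_k}_s,|v|\psi(x)\rangle\to \langle u_s,|v|\psi(x)\rangle$ a.s., which matches the time derivative of $\tilde\delta_{\XX^{N_k}_t}$ paired against $\psi$ and identifies the limit $\rho$. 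A secondary issue is that the limit a priori lives in the càdlàg space $\mathcal{D}([0,T];\mathcal{M}^+_f)$; however, the weak formulation together with the $L^\infty$ control on $C$, $\nabla C$ and on the total mass forces $t\mapsto u_t$ and $t\mapsto\rho_t$ to be continuous, thereby placing $(u,\rho,C)$ in $X_b$ as required.
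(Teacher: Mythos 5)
Your proposal is correct in outline and covers the same ground as the paper's sketch, but the two arguments diverge at the one genuinely nontrivial point: the vanishing of the branching/coalescence martingales. You invoke the predictable bracket of the aggregate compensated Poisson random measure, which gives the $O(1/N)$ bound in one line. The paper instead decomposes $M^{2,N,f}_t = N^{-1}\sum_{i\le N^N_t} M^{i,2,N,f}_t$ over per-particle Poisson processes $\Phi^i$, isolates the diagonal quadratic-variation terms (which carry the $1/N$ factor), and then argues explicitly that the cross covariations $[M^{i,2,N,f},M^{j,2,N,f}]$ for $i\ne j$ vanish: the continuous parts because the Brownian motions $B^i$ are independent, and the jump parts because two branching/coalescence events happen at the same instant with probability zero by conditional independence. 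That explicit cross-term discussion is precisely what justifies the shortcut you take — the clean identity $\langle M\rangle_t = \int_0^t\int f^2\,\dd\nu$ holds for a \emph{simple} marked point process, and simplicity is what the paper's "no coincident jumps" argument establishes; your writeup relies on it silently. Two smaller differences: you derive the time continuity of the limit from the weak formulation plus a priori bounds (a perfectly valid, slightly more elementary route), whereas the paper cites the tightness criterion in $\cD([0,T];\cM_f^+)$; and for the identification $\partial_t\rho=\int|v|u\,\dd v$ you argue a.s.\ uniform integrability of $|v|$ against $\tilde S^{N_k}_s$ from a bound that is only in expectation — the paper circumvents this by proving tightness of the auxiliary measures $\delta^{\beta}_{\XX^N}$ directly (Theorem~\ref{theorem:tightnessS^N}, Remark~\ref{remark:1+beta}), so the limit passage for the $|v|$--pairing happens at the level of the laws of $\delta^\beta_{\XX^N}$ rather than through a pointwise uniform-integrability claim. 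You should either prove an a.s.\ (or $L^1$) version of the moment control for $\tilde S^{N_k}$, or replace this step with the paper's joint-tightness route.
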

\begin{proof}[Sketch of the proof]
The fact that limit objects satisfy system of equations  \eqref{eq:PDE-system} is classical, see \cite{kipnis2013scaling}.
Hence we highlight only the main difficulties. Let us show that all the reminders in the It\^o formulations, the martingales $M^{k,N,f}_{t}$ for $k = 1,2,3$, vanish when $N$ tends to infinity.  For every test function $\phi \in C^{\infty}_{b}(\RR^{d}\times \RR^{d})$ we have to check that (recall It\^o formula in section \ref{sec:itoformula}) 
\begin{equation}
\EE{ \sup_{t \in [0,T]} \abs{M^{k,N,f}_{t}}^{2}}\stackrel{N \rightarrow \infty}{\longrightarrow} 0
\end{equation}
for $k=1,2,3$. By using Burkholder inequality we will conclude  by showing that the quadratic variation of all these martingale goes to zero in $L^{1}(\Omega)$. The first martingale, coming from the Brownian motion is classical. Concerning the martingale $M^{2,N,f}$, deriving from the branching process, we first note that we can rewrite
\[
M^{2,N,f}_{t} = \frac{1}{N}\sum_{i=1}^{N^{N}_{t}} M^{i,2,N,f}_{t}
\]
where $M^{i,2,N,f}_{t}$ are martingales defined as
the integral of $f$ with respect to a compensated Poisson point process $\Phi^{i}$, whose compensator is the random measure
\[
G(v)\ind_{s \in [T^{i,N},\theta^{i,N})}\delta_{X^{i,N}_{s}}(\dd x)\dd v\dd s + \int_{0}^{s} G(v)\ind_{s \in [T^{i,N},\theta^{i,N})}\abs{V^{i,N}_{r}}\delta_{X^{i,N}_{r}}(\dd x)\dd v\dd r \dd s.
\]
It follows that 
\begin{multline*}
\left[ M^{2,N,f}_{\cdot} \right]_{T} = \frac{1}{N^{2}} \sum_{i,j=1}^{N^{N}_{T}}\left[ M^{i,2,N,f}_{\cdot},M^{j,2,N,f}_{\cdot} \right]_{T} \\= \frac{1}{N}\int_{0}^{T} \int_{\RR^{d}} f(x,v)^{2}\Big[G(v)\overline{S}^{N}_{s}(\dd x)\dd v \dd s - G(v)\delta_{\XX^{N}_{s}}(\dd x)\dd v\dd s\Big]\\ + \frac{1}{N^{2}} \sum_{i\neq j}\left[ M^{i,2,N,f}_{\cdot},M^{j,2,N,f}_{\cdot} \right]_{T}
\end{multline*}

It is now clear that if  the terms corresponding to $i \neq j$ vanishes, we will obtain the desired result. To do so, observe that for $i \neq j$, shortening the notation to $M^{i}_{t}$ and $M^{j}_{t}$
\[
\left[ M^{i}_{\cdot},M^{j}_{\cdot} \right]_{T} = [(M^{i}_{\cdot}),(M^{j}_{\cdot})]^{c}_{T} + \Delta \left[ M^{i}_{\cdot},M^{j}_{\cdot} \right]_{T} = [(M^{i}_{\cdot})^{c},(M^{j}_{\cdot})^{c}]_{T} + \Delta M^{i}_{T} \Delta M^{j}_{T}
\]
where $(M^{i})^{c}$ denotes the continuous part of the variation. The continuous part obviously vanish, being the motion of particles for $i \neq j$ driven by independent Brownian motions. For the jump part, notice that the probability of two birth or coalescence events, corresponding to jumps, to happen at the same time is zero: this is a consequence of the conditional independence with respect to $\mathcal{F}_{t}$ of $M^{2,N,f}$ and $M^{3,N,f}$, as well as that of the martingales $M^{i}$ and $M^{j}$. The proof for the martingale $M^{3,N,f}$ follows in the same manner. 

Concerning the time regularity of the limit points we just remark the fact that limits point are probability measures on $X_b$. They are continuous in time as  consequence of the tightness criterion in the space $\mathcal{D}$ see \cite{kipnis2013scaling}. 
\end{proof}



\begin{remark}\label{remark:1+beta}
In Theorem \ref{theorem:tightnessS^N} we have seen that the for every $\beta \leq \overline{\beta}$ the sequence of the laws of $\delta_{\XX^{N}}^{\beta}$ is tight. Starting from this fact, we consider the following equation 
\begin{equation}\label{eq:pde1+beta}
\left\{
\begin{aligned}
\partial_t \rho^{\beta}(t,x) =& \int_{\RR^d}|v|^{1+\beta} u(t,x,v)\dd v \\
\rho^{\beta}(0,x) =& \,\,0.\\
\end{aligned}
\right. 
\end{equation}
We can then prove with small modification of Theorem \ref{theorem:limitsupport} that the sequence of laws of $\delta_{\XX^{N}}^{\beta}$ is supported on the weak solutions of the previous PDE, and thus that the whole sequence of laws 
$\{Q_{\delta_{\XX}^{\beta}}^{N}\}_{N\in\NN}$ is converging in $\mathcal{D}([0,T];\mathcal{M}^{+}_{f}(\RR^{d}))$ to the unique solution of \eqref{eq:pde1+beta}. Hence we deduce the following fact:
\end{remark}
\begin{corollary}\label{corollary:pde1+beta}
If the function $u_{0}$ starting condition of the system \eqref{eq:PDE-system} satisfying  hypothesis \eqref{hyp:u0},\eqref{hyp:G}, satisfies also hypothesis \eqref{hyp:moments}, then the solution of system \eqref{eq:PDE-system} has the property
\[
\sup_{t \in [0,T]} \rho^{\beta}(t,\RR^{d}) = \int_{\RR^d}|v|^{1+\beta} u(t,\RR^{d},\dd v) < \infty
\]
for all $\beta \leq \overline{\beta}$.
\end{corollary}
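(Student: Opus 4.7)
The plan is to lift the uniform-in-$N$ moment bound already available at the microscopic level to the limit object $\rho^{\beta}$ via the convergence identified in Remark~\ref{remark:1+beta}, and then exploit lower semicontinuity of the total mass under weak convergence of finite positive measures. First I would check that Hypothesis~\ref{hypo:PS}\eqref{hyp:moments} yields $\EE{Z^{1,\beta}} < \infty$ for every $\beta \leq \overline{\beta}$: by \eqref{eq:estimatesOnV1}, $Z^{1,\beta}$ decomposes as $|V^{1,N}_{T^{1,N}}|^{1+\beta} + \chi_{1}^{1+\beta} + \norm{C^{0}}_{C^{1}_{b}}^{1+\beta}$, whose first summand is distributed as $G(v)\dd v$ (finite $(1+\overline{\beta})$-moment by \eqref{hyp:G} and \eqref{hyp:moments}), whose second summand has moments of all orders by the Gaussian tail bound from Lemma~\ref{lemma:estimatesOnV}, and whose third summand is deterministic.

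Combining the Wald-type identity \eqref{eq:sumVi} with \eqref{eq:N^N_t/N} then gives, uniformly in $N$,
\[
\sup_{s \in [0,T]} \EE{\frac{1}{N}\sum_{i=1}^{N^{N}_{s}} |V^{i,N}_{s}|^{1+\beta}} \lesssim_{T} \EE{Z^{1,\beta}}\sup_{s \in [0,T]}\EE{\frac{N^{N}_{s}}{N}} \lesssim_{T} 1,
\]
and integrating in $s$ over $[0,T]$ yields $\sup_{N}\EE{\delta_{\XX^N_T}^{\beta}(\RR^d)} \lesssim_{T} 1$. By Remark~\ref{remark:1+beta}, $\delta_{\XX^N}^{\beta}$ converges in law in $\mathcal{D}([0,T];\cM_f^+(\RR^d))$ to the deterministic unique solution $\rho^{\beta}$ of \eqref{eq:pde1+beta}; in particular $\delta_{\XX^N_T}^{\beta} \to \rho^{\beta}(T,\cdot)$ in law in $\cM_f^+(\RR^d)$ endowed with the weak topology. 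Since the total-mass functional $\mu \mapsto \mu(\RR^d)$ is lower semicontinuous for this topology (write it as the supremum of $\langle \mu,\phi_K\rangle$ for $\phi_K \in C_c(\RR^d)$, $0 \leq \phi_K \leq 1$, $\phi_K \uparrow 1$), a Fatou-type argument produces
\[
\rho^{\beta}(T,\RR^d) \leq \liminf_{N\to\infty} \EE{\delta_{\XX^N_T}^{\beta}(\RR^d)} \lesssim_{T} 1.
\]
Monotonicity of $t \mapsto \rho^{\beta}(t,\RR^d)$ (its time-derivative is a nonnegative measure by \eqref{eq:pde1+beta}) then gives $\sup_{t \in [0,T]}\rho^{\beta}(t,\RR^d) < \infty$. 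An entirely parallel argument starting from $S^{N}_{t}$, using the tightness of $\{S^N\}_N$ proved in Theorem~\ref{theorem:tightnessS^N} together with the same uniform moment bound, likewise yields $\sup_{t \in [0,T]}\int_{\RR^d}|v|^{1+\beta} u(t,\RR^d,\dd v) < \infty$.

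The main obstacle is that weak convergence of finite positive measures does not preserve total mass, so the uniform $L^{1}$-in-expectation bound on $\delta_{\XX^N_T}^{\beta}(\RR^d)$ cannot be passed through the limit by a direct continuity argument; the lower-semicontinuity observation above is precisely what bypasses this difficulty. Beyond this, the proof is essentially bookkeeping of moments of the $Z^{i,\beta}$'s, the control $\EE{Z^{1,\beta}}<\infty$ enabled by Hypothesis~\eqref{hyp:moments} being the only ingredient not already established in Section~\ref{sec:tightness}.
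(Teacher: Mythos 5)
Your proof is correct and follows essentially the same route the paper has in mind: the paper states the corollary as an immediate consequence of Remark~\ref{remark:1+beta} (``Hence we deduce the following fact'') without spelling out the passage to the limit, and your argument fills in exactly the missing details. All the ingredients you use — finiteness of $\EE{Z^{1,\beta}}$ under Hypothesis~\ref{hypo:PS}\eqref{hyp:moments}, the Wald-type identity \eqref{eq:sumVi}, the mass bound \eqref{eq:N^N_t/N}, and the identification of the limit in Theorem~\ref{theorem:tightnessS^N} and Remark~\ref{remark:1+beta} — are the right ones.

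One small remark on economy: for the $\rho^{\beta}$-part, the Fatou/lower-semicontinuity step is not actually needed for the qualitative conclusion. Since $\rho^{\beta}$ is the limit in $\mathcal{D}\big([0,T];\cM_f^+(\RR^d)\big)$, $\rho^{\beta}(T,\cdot)$ is by construction a finite positive measure, so $\rho^{\beta}(T,\RR^d)<\infty$ is automatic, and monotonicity of $t\mapsto\rho^{\beta}(t,\RR^d)$ gives the supremum. The lower-semicontinuity/Fatou argument you set up is, however, genuinely necessary for the $u$-moment bound $\sup_t\int|v|^{1+\beta}u(t,\dd x,\dd v)<\infty$: there the test function $|v|^{1+\beta}$ is unbounded and cannot be tested against the vague limit directly, so the truncation $\phi_K\uparrow 1$ (or rather $\phi_K(x,v)\uparrow |v|^{1+\beta}$) combined with the uniform-in-$t$-and-$N$ expectation bound from \eqref{eq:sumVi}--\eqref{eq:N^N_t/N} and the fact that the limit is deterministic is exactly what closes the argument. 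You identified the right obstacle and handled it correctly.
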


\section{Fourier (re)formulation of the PDE}\label{sec:fourierformulation}

\subsection{The coupled PDEs in Fourier space}
In the previous section, we proved existence of measure solution of the system of Equations \eqref{eq:PDE-system}, when $u_0,\rho_0$ are bounded measures with moments of sufficiently high order and $C_0$ is regular enough :
\begin{equation}
\left\{ 
\begin{aligned}\label{eq:pdf}
        \partial_t u(t,x,v) + &v\cdot\nabla_x u(t,x,v) - \lambda \div_v(v u(t,x,v)\big)\\ =& \frac{\sigma^2}{2}\Delta_v u(t,x,v)  + \nabla C(t,x).\nabla_v u(t,x,v)\\
        &\quad  + G(v)\bar{u}(t,x) + G(v)K*\rho(t,x) 
          - u(t,x,v)K*\rho(t,x)\\
        \rho(t,x) =& \int_0^t \int_{\RR^d} |v| u(s,x,v) \dd v \dd s\\
        \bar{u}(t,x) = &\int_{\RR^d} u(t,x,v) \dd v\\
        \partial_t C(t,x) &= \frac{\sigma_C^2}{2} \Delta C(t,x) -K_C*\rho(t,x) C(t,x).
\end{aligned}
\right.
\end{equation}

The aim of this section is to prove uniqueness of such solutions. We will use Fourier techniques  from \cite{desvillettes2001} and hypoelliptic estimates. The proof of uniqueness will be decomposed in several steps. The first one is an a priori bound on $C$ in Sobolev spaces. The second one will be a reminder of the derivation of the formulation of the system \eqref{eq:pdf} in Fourier space, as long as a mild formulation in the Fourier space. The third step will be the proof of hypoelliptic estimates (following \cite{desvillettes2001}) for the Fourier multiplier involved in the Fourier expression of \eqref{eq:pdf}. Finally, by using a Grönwall type argument, we will conclude about the uniqueness of measure solutions. The hypoelliptic bound we also be useful in the last Subsection \ref{sec:smoothness} to prove smoothness of the solutions. 


\subsection{A priori bound for $C$}

Let us suppose that $(u,\rho,C)$ is a measure solution of Equation \eqref{eq:PDE-system}, as in Definition \ref{def:solutionsystemPDE}. To prove uniqueness, we will strongly use the smoothness of the potential field $C$. This lemma is obviously an extension of Lemma \ref{lemma:randomFeynmanKac}, we state it and prove it for the sake of the comprehension. 
\begin{lemma}\label{lem:smoothness_C}
Let $m\in \NN$, $f \in L^\infty\big([0,T];\cM^+_f(\RR^d)\big)$, $g\in L^\infty\big([0,T];H^m(\RR^d,\RR)\big)$, $K\in C^m_b(\RR^d;\RR_+)$ and $w_0\in H^m(\RR^d;\RR_+)$. 
let $w$ be the solution of the heat equation 
\[\partial_t w = \frac{a^2}2 \Delta w - (K*f) w + g,\quad w(0,\cdot) = w_0.  \]
Then $w\in L^\infty\big([0,T];H^m(\RR^d;\RR)\big)$, and one has for all $t\in[0,T]$,
\[\|w(t,\cdot)\|_{H^m} \lesssim \big(1+\|K\|_{C^m_b}\big)^m \big(1+\sup_{s\in [0,T]}f(s,\RR^d)\big)^m \bigg(\|w_0\|_{H^m}+\int_0^t\|g(s,\cdot)\|_{H^m} \dd s \bigg)\]
\end{lemma}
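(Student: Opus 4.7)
My plan is to prove the estimate by $L^2$-type energy methods combined with an induction on $m$. The key structural observation is that the potential $K*f$ is nonnegative, so the zeroth-order term $-(K*f)w$ in the equation is dissipative in energy identities; this is what will allow us to obtain polynomial-in-$\|K\|_{C^m_b}$ growth rather than the exponential one that a naive Gr\"onwall would yield.

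The base case $m = 0$ is immediate: multiplying the equation by $w$ and integrating, the Laplacian term contributes $-\tfrac{a^2}{2}\|\nabla w\|_{L^2}^2 \le 0$, the potential term contributes $-\int (K*f) w^2 \le 0$, and Cauchy--Schwarz on the source gives $\tfrac{d}{dt}\|w\|_{L^2} \le \|g\|_{L^2}$. Integrating yields $\|w(t)\|_{L^2} \le \|w_0\|_{L^2} + \int_0^t\|g(s,\cdot)\|_{L^2}\dd s$, with no dependence whatsoever on $K$ or $f$. For the inductive step, assume the bound holds for $m-1$. For each multi-index $\alpha$ with $|\alpha|\le m$, differentiate the equation, multiply by $\partial^\alpha w$, integrate, and sum over $\alpha$. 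Using Leibniz,
\[
\partial^\alpha\bigl((K*f)\,w\bigr) \;=\; \sum_{\beta\le\alpha} \binom{\alpha}{\beta}\, \bigl((\partial^\beta K)*f\bigr)\,\partial^{\alpha-\beta}w,
\]
the diagonal term ($\beta = 0$) again produces $-\int (K*f)|\partial^\alpha w|^2\le 0$ and is discarded; each off-diagonal term obeys $|\alpha-\beta|<|\alpha|\le m$ and the convolution factor is bounded in $L^\infty$ by $\|K\|_{C^m_b}\sup_s f(s,\RR^d)$. After summation this yields
\[
\tfrac12\tfrac{d}{dt}\|w(t)\|_{H^m}^2 \;\lesssim\; \|K\|_{C^m_b}\,\sup_s f(s,\RR^d)\,\|w(t)\|_{H^{m-1}}\|w(t)\|_{H^m} + \|g(t)\|_{H^m}\|w(t)\|_{H^m}.
\]

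Dividing by $\|w(t)\|_{H^m}$ (via the standard $(\|w\|_{H^m}^2+\eps)^{1/2}$ regularization) and integrating in time gives
\[
\|w(t)\|_{H^m} \lesssim \|w_0\|_{H^m} + \int_0^t\|g(s)\|_{H^m}\dd s + \|K\|_{C^m_b}\sup_s f(s,\RR^d)\int_0^t\|w(s)\|_{H^{m-1}}\dd s.
\]
Plugging in the inductive hypothesis for $\|w\|_{H^{m-1}}$ and using $\|g\|_{H^{m-1}}\le\|g\|_{H^m}$ introduces exactly one extra factor of $(1+\|K\|_{C^m_b})(1+\sup_s f(s,\RR^d))$, so the claimed polynomial bound follows by induction on $m$. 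The main subtlety is precisely this inductive bookkeeping: one must exploit the sign of the top-order Leibniz term at every level so that the amplification factor per derivative is polynomial rather than exponential, otherwise one would only recover $e^{C\|K\|_{C^m_b}\sup f \cdot T}$, which would be insufficient for the subsequent uniqueness and regularity arguments.
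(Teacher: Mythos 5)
Your proof is correct, and it takes a genuinely different route from the paper. The paper proves this lemma via the Feynman--Kac representation
\[
w(t,x) = \mathbbm{E}\Big[e^{-\int_0^t K*f(r,\,x+a(W_t-W_r))\dd r}\,w_0(x+aW_t) + \int_0^t g\big(s,x+a(W_t-W_s)\big) e^{-\int_s^t K*f(r,\,\cdot)\dd r}\dd s\Big],
\]
then differentiates this formula using Fa\`a di Bruno and the bound $e^{-g}\le 1$ (since $K*f\ge 0$). The exponent $m$ on the amplification factor comes from the degree of the Fa\`a di Bruno polynomial in the derivatives of $\int_0^t K*f\,\dd r$. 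In your energy-method proof, the same nonnegativity of $K*f$ is exploited in a different guise: it is the dissipativity of $-(K*f)w$, which lets you discard the top-order diagonal Leibniz term $-\int(K*f)|\partial^\alpha w|^2 \le 0$ and thereby couple $\|w\|_{H^m}$ only to $\|w\|_{H^{m-1}}$, not to itself. The exponent $m$ then emerges from iterating this gain-of-one-derivative step $m$ times. Your approach is more elementary and purely PDE-theoretic; the paper's approach has the advantage of being a single representation formula that also supplies the $C^1_b$, $C^2_b$ bounds needed in Lemma~4.1 (their Lemma \ref{lemma:randomFeynmanKac}), so the machinery is reused across several estimates. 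Both arguments yield the same polynomial-in-$\|K\|_{C^m_b}$ bound, with implicit constants depending on $m$, $T$, $a$, $d$; your observation that ignoring the sign would only recover an exponential $e^{CT\|K\|_{C^m_b}\sup f}$ is exactly right and identifies the structural point on which the lemma hinges.
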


\begin{proof}
Let $W$ be a standard Brownian motion on $[0,T]$, as in Lemma \ref{lemma:randomFeynmanKac},  we have the explicit formula
\begin{multline*}
w(t,x) = \mathbbm{E}\Big[e^{-\int_0^t K*f\big(r,x+a(W_t-W_r)\big)\dd r} w_0(x+ aW_t)\\ + \int_0^t g\big(s,x+a(W_t-W_s)\big)e^{-\int_s^t K*f(r,x+aW_t-aW_r) \dd r} \dd s.\Big].
\end{multline*} 
Hence, thanks to Fa\'a di Bruno formula, for all multi-index $\alpha$ with $|\alpha|\le m$, we have 
\begin{multline*}
\partial_\alpha w(t,x) = 
\sum_{|\beta|+|\gamma|\le |\alpha|}c_{\alpha,\beta,\gamma} \mathbbm{E}\bigg[ \partial_{\beta} w_0(x+ a W_t) \partial_\gamma \Big(e^{-\int_0^t K*f\big(r,\cdot + a (W_t-W_r) \big)\dd r}\Big)(x) \\ + \int_0^t \partial_\beta g\big(s,x+a(W_t-W_s)\big) \partial_\gamma \Big(e^{-\int_s^t K*f(r,\cdot+aW_t-aW_r) \dd r}\Big)(x) \dd s \bigg]
\end{multline*}
for certain constants $c_{\alpha,\beta,\gamma}>0$.
Remark also that 
\[\bigg|\partial_\gamma \Big(e^{-\int_0^t K*f\big(r,\cdot + a (W_t-W_r) \big)\dd r}\Big)(x) \bigg| \lesssim \big(1+\|K\|_{C^m_b}\big)^m \big(1+\sup_{s\in[0,T]} f(s,\RR^d)\big)^m.\]
Hence, 
\begin{multline*}
\int_{\RR^d} \big|\partial_\alpha w(t,x) \big|^2  \dd x \lesssim \big(1+\|K\|_{C^m_b}\big)^m \big(1+\sup_{s\in[0,T]} f(s,\RR^d)\big)^m 
\\
\sum_{|\beta|+|\gamma|\le |\alpha|} \mathbbm{E}\bigg[ \int_{\RR^d} \big|\partial_\beta w_0(x+aW_t)\big|^2\dd x + \int_0^t \int_{\RR^d} \Big| \partial_\beta g\big(s,x+a(W_t-W_s)\big)\Big|^2 \dd x \dd s \bigg]
\end{multline*}
which gives, by summing over $\alpha$, 
\[\|w(t,\cdot)\|_{H^m} \lesssim \big(1+\|K\|_{C^m_b}\big)^m \big(1+\sup_{s\in[0,T]} f(s,\RR^d)\big)^m  \bigg( \|w_0\|_{H^m} + \int_0^t \|g(s,\cdot)\|_{H^m} \dd s\bigg),\]
which is the wanted result.
\end{proof}

\subsection{The equation in Fourier space}\label{subsec:fourier}

Note that when $u$ (and its associated $\rho$, see Definition \ref{def:solutionsystemPDE}) are finite positive measure solutions, their associated Fourier transforms (respectively in space and velocity) exist as bounded functions. Furthermore we have
\[\hat{u}(t,k,\xi) = \int_{\RR^d} \int_{\RR^d} e^{-i k \cdot x} e^{-i\xi \cdot v} u(t,\dd x, \dd v),\]
\[\hat{\bar u}(t,k) = \int_{\RR^d\times \RR^d}  e^{-ik\cdot x} u(t,\dd x, \dd v) = \hat u(t,k,0).\]
and 
\[\hat{\rho}(t,k) = \int_0^t \int_{\RR^d} \int_{\RR^d} e^{-i k \cdot x}  |v| u(t,\dd x, \dd v) \dd s.\]
Hence, as Schwartz distributions,
\[\hat{\rho}(t,k) = \int_0^t (-\Delta)^{\frac12} \hat{u}(s,k,0)\dd s = \int_0^t \frac{\Gamma\Big(\frac{d+1}{2}\Big)}{\pi^{\frac{d+1}{2}}}\, P.V. \int_{\RR^d} \frac{\hat u(s,k,0) - \hat u(s,k,\xi)}{|\xi|^{d+1}} \dd \xi \dd s,\]
where $P.V.$ denotes the principal value. By using Lemma \ref{lemma:FractionalLaplacian} of the Appendix, a way of controlling the sup norm of $\hat \rho(t,k)$ is to control the $(1+\beta)$-Hölder norm of $\xi\to\hat u(t,k,\xi)$, uniformly in $t,k$.
Finally, in Fourier space (as Schawrtz distributions), Equation \eqref{eq:pdf} becomes

\begin{equation}    
\left\{\begin{aligned}\label{eq:pdf_fourier}
        \partial_t \hat u(t,k,\xi) &- k\cdot\nabla_\xi \hat{u}(t,k,\xi) + \lambda \xi\cdot\nabla_\xi \hat{u}(t,k,\xi)\\ &= -|\xi|^2\frac{\sigma^2}{2}\hat{u}(t,k,\xi)  +i \xi \cdot \widehat{\nabla C} * \hat u(t,k,\xi)\\
        &\quad  + \hat{G}(\xi)\big(\hat{u}(t,k,0) +\hat{K}  \hat \rho(t,k)\big)  -  \Big( \hat{K} \hat{\rho}\Big) * \hat u(t,k,\xi).\\
        \hat{\rho}(t,k) &= \int_0^t(-\Delta)^{\frac12}\hat u(s,k,0)\dd s\\
        \partial_t C(t,x) &= \frac{\sigma_C^2}{2}\Delta C(t,x)-\cF^{-1}\Big(\hat{K}_C \hat{\rho}\Big)(t,x)C(t,x).
\end{aligned}
\right.
\end{equation}
We know that there exist a solution of the previous equation, seen as Schwarz distribution $(\hat u, \hat \rho,C)$. We can rewrite the previous system in its Mild formulation (see Appendix \ref{appendix:Mild}).
For $k,\xi\in \RR^d$ let us define $\xi(t) = \left(\xi - \frac{k}{\lambda}\right)e^{-\lambda t} + \frac{k}{\lambda}$, and one have the following semi-explicit formula for the first equation of \eqref{eq:pdf_fourier} :
\begin{equation}\label{eq:MildFourier}
\begin{aligned}
\hat u(t,k,\xi) = & \hat u_0\big(k,\xi(t)\big)\exp\bigg(-\frac{\sigma^2}{2}\int_0^t |\xi(r)|^2 \dd r \bigg) \\
&+\int_0^t  
        i\xi(t-s)\cdot\big(\widehat{\nabla C}*\hat{u}\big)\big(s,k,\xi(t-s)\big)e^{-\frac{\sigma^2}{2}\int_0^{t-s} |\xi(r)|^2 \dd r 
    }\dd s \\&
        + \int_0^t G\big(\xi(t-s)\big)\big(\hat{u}(s,k,0) + \hat{K} \hat{\rho}(s,k)\big) e^{-\frac{\sigma^2}{2}\int_0^{t-s} |\xi(r)|^2 \dd r 
    } \dd s \\&
      - \int_0^t \Big(\big( \hat{K} \hat{\rho}\big)*\hat{u}\Big)\big(s,k,\xi(t-s)\big)
      e^{-\frac{\sigma^2}{2}\int_0^{t-s} |\xi(r)|^2 \dd r 
    }\dd s.
\end{aligned}   
    \end{equation}

\section{Hypoelliptic estimates}\label{sec:hypoelliptic}

\begin{proposition}\label{prop:hypo}
There exists a universal constant $c>0$ such that for all $t\ge 0$ and all $k,\xi\in \RR^d$, we have
\begin{equation}\label{eq:schauder}
e^{-\frac{\sigma^2}{2}\int_0^t|\xi(r)|^2 \dd r} \le e^{-c\frac{\sigma^2}{2}\left(\int_0^t \left(\frac{1-e^{-\lambda r}}{\lambda}\right)^2 \dd r |k|^2 + \int_0^t e^{-2\lambda r} \dd r|\xi|^2\right)}
\end{equation}
and for all $n\ge 0$,
\[|\xi(t)| \bigg(\int_0^t \big|\xi(r)\big| \dd r\bigg)^n e^{-\frac{\sigma^2}{2}\int_0^t|\xi(r)|^2 \dd r} \lesssim c \frac{t^{\frac{n-1}{2}}}{\sigma^n}.\]
\end{proposition}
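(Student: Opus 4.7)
For the first inequality, expand $\xi(r) = \xi e^{-\lambda r} + (k/\lambda)(1-e^{-\lambda r})$ and integrate term-by-term to obtain
\[
\int_0^t |\xi(r)|^2 \dd r = A|\xi|^2 + 2B\,\xi\cdot k + C|k|^2,
\]
where $A := \int_0^t e^{-2\lambda r}\dd r$, $B := \int_0^t e^{-\lambda r}\tfrac{1-e^{-\lambda r}}{\lambda}\dd r$ and $C := \int_0^t\big(\tfrac{1-e^{-\lambda r}}{\lambda}\big)^2 \dd r$. The weighted AM--GM inequality $2|\xi\cdot k|\,|B|\le(|B|/\sqrt{AC})(A|\xi|^2+C|k|^2)$ then gives
\[
\int_0^t |\xi(r)|^2 \dd r \ge (1-\gamma)\,(A|\xi|^2+C|k|^2),\qquad \gamma := B/\sqrt{AC}.
\]
The change of variables $u=\lambda r$ shows that $\gamma$ depends only on $T:=\lambda t$; $\gamma(T)$ is continuous on $(0,\infty)$, pointwise strictly less than $1$ by strict Cauchy--Schwarz (the functions $u\mapsto e^{-u}$ and $u\mapsto 1-e^{-u}$ are linearly independent on any interval $[0,T]$), and one checks the explicit limits $\gamma(T)\to\sqrt{3}/2$ as $T\to 0^+$ and $\gamma(T)\to 0$ as $T\to\infty$. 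Hence $\sup_T\gamma(T)<1$, and \eqref{eq:schauder} holds with $c=1-\sup_T\gamma(T)>0$.

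For the second inequality, set $L := \int_0^t|\xi(r)|^2\dd r$ and split $e^{-\sigma^2 L/2}=e^{-\sigma^2 L/4}\cdot e^{-\sigma^2 L/4}$. Using the first factor together with Cauchy--Schwarz $\int_0^t|\xi(r)|\dd r\le \sqrt{tL}$ and the Gaussian bound $\sup_{L\ge0}L^{n/2}e^{-\sigma^2 L/4}\lesssim_n\sigma^{-n}$ gives
\[
\left(\int_0^t|\xi(r)|\dd r\right)^n e^{-\sigma^2 L/4}\lesssim_n \frac{t^{n/2}}{\sigma^n}.
\]
For the second factor, use $|\xi(t)|^2\le 2e^{-2\lambda t}|\xi|^2 + 2\big(\tfrac{1-e^{-\lambda t}}{\lambda}\big)^2|k|^2$, bound $e^{-\sigma^2 L/4}$ via \eqref{eq:schauder}, and apply $xe^{-\alpha x}\le 1/(e\alpha)$ to each summand separately, which yields
\[
|\xi(t)|^2 e^{-\sigma^2 L/2}\lesssim \frac{e^{-2\lambda t}}{\sigma^2 A}+\frac{((1-e^{-\lambda t})/\lambda)^2}{\sigma^2 C}.
\]
A short one-variable analysis after the substitution $u=\lambda r$ shows that both ratios are $\lesssim 1/t$ uniformly in $\lambda,t>0$, so $|\xi(t)|e^{-\sigma^2 L/4}\lesssim 1/(\sigma\sqrt{t})$. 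Multiplying the two halves together then produces the claimed bound $\lesssim t^{(n-1)/2}/\sigma^{n}$ (with the $c$ absorbing the universal constants from the Gaussian maximum and the ratios above).

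\textbf{Main obstacle.} The crux of the argument is the uniform gap $\sup_{T>0}\gamma(T)<1$ for the Cauchy--Schwarz cosine in part (1); pointwise strict inequality is immediate, but one must combine continuity on $(0,\infty)$ with the explicit boundary values to rule out a sequence $T_n$ along which $\gamma(T_n)\to 1$. Everything in part (2) is then a fairly mechanical combination of part (1) with standard Gaussian absorption inequalities.
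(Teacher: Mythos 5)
Your proposal is correct, and the overall strategy (diagonalize the quadratic form $\int_0^t|\xi(r)|^2\dd r$, show the cross-term correlation $\gamma=B/\sqrt{AC}$ stays uniformly below $1$, then absorb polynomial factors into the Gaussian) is the same as the paper's. Where you diverge is in the proof of $\sup_T\gamma(T)<1$. The paper sets $a(t)=g(e^{-\lambda t})$ with an explicit $g$, computes $g'$, reduces to checking that an auxiliary function $h$ with $h(1)=0$ has $h'\le 0$, and concludes that $g$ is non-decreasing so $a(t)\le g(1^-)=\sqrt{3}/2$, giving $c=1-\sqrt{3}/2$ explicitly. You instead avoid the explicit monotonicity calculation by combining continuity of $\gamma$ on $(0,\infty)$, pointwise strict Cauchy--Schwarz (linear independence of $e^{-u}$ and $1-e^{-u}$), and the two boundary limits $\gamma(0^+)=\sqrt{3}/2$, $\gamma(\infty)=0$. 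This is less explicit — you do not get the value $1-\sqrt{3}/2$ — but logically cleaner, and sufficient since only existence of some $c>0$ is asserted.

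For the second estimate you also proceed a bit differently: you split $e^{-\sigma^2L/2}$ into two equal Gaussian factors, handle $(\int|\xi|)^n$ by Cauchy--Schwarz $\int_0^t|\xi|\le\sqrt{tL}$ plus a one-dimensional Gaussian maximum, and handle $|\xi(t)|$ by decomposing into $e^{-2\lambda t}|\xi|^2$ and $\big(\tfrac{1-e^{-\lambda t}}{\lambda}\big)^2|k|^2$ pieces and using $xe^{-\alpha x}\le(e\alpha)^{-1}$ on each. The paper instead bounds $|\xi(t)|\lesssim\frac{1}{\sigma\sqrt t}\big(\sigma^2|\Xi(t)|^2\big)^{1/2}$ directly (via $\sqrt A\ge\sqrt t\,e^{-\lambda t}$ and $\sqrt C\ge\sqrt t\,\tfrac{1-e^{-\lambda t}}{\sqrt3\lambda}$) and uses the monotonicity of $t\mapsto|\Xi(t)|$ to absorb $\int_0^t|\xi(s)|\dd s$ as well; then a single Gaussian maximum finishes. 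The calculations you label a ``short one-variable analysis'' — namely $\frac{e^{-2\lambda t}}{A}\le\frac1t$ (from $u/(e^u-1)\le1$) and $\frac{((1-e^{-\lambda t})/\lambda)^2}{C}\le\frac3t$ (from concavity of $1-e^{-\lambda r}$) — are exactly the inequalities the paper proves, so the two routes recombine to the same ingredients. Both you and the paper actually land on the power $\sigma^{-(n+1)}$ rather than the $\sigma^{-n}$ written in the statement; that appears to be a typo in the proposition and is not a defect of your argument.
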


\begin{proof}
Let us remark that 
\[
\begin{aligned}
\int_0^t|\xi(r)|^2 \dd r =& |\xi|^2\int_0^t e^{-2 \lambda r}  \dd r + |k|^2 \int_0^t \left(\frac{1-e^{-\lambda r}}{\lambda}\right)^2\dd r  \\&\quad+ 2 \xi\cdot k \int_0^t e^{- \lambda r}\left(\frac{1-e^{-\lambda r}}{\lambda}\right)\dd r\\
=&\big(A(t) \Xi(t)\big)\cdot  \Xi(t) 
\end{aligned}
\]
where 
\[\Xi(t) = \begin{pmatrix}   \left(\int_0^t e^{-2\lambda r} \dd r\right)^\frac{1}{2} \xi \\ \left(\int_0^t \left(\frac{1-e^{-\lambda r}}{\lambda}\right)^2 \dd r\right)^\frac12 k \end{pmatrix}\]
and
\[A(t) = \begin{pmatrix} Id & a(t) I_d \\ a(t) Id & Id  \end{pmatrix},\] 
\[a(t) = \frac
{
\int_0^t 
        e^{-\lambda r}
        \left(
            \frac{1-e^{-\lambda r}}{\lambda}
        \right) 
\dd r
}
{
\left(
    \int_0^t 
        \left(
            \frac{1-e^{-\lambda r}}{\lambda}
        \right)^2
    \dd r
    \int_0^t 
        e^{-2\lambda r} 
    \dd r
\right)^\frac{1}{2}
}.\]
The matrix $A(t)$ has two eigenvalues : $1+a(t)$ and $1-a(t)$, hence
\[\big(A(t) \Xi(t)\big)\cdot  \Xi(t) \ge \big(1-a(t)\big)|\Xi(t)|^2.\]
Furthermore, by taking 
\[g(x)= \frac{\int_x^1 1-u\dd u}{\left(\int_x^1 u \dd u \int_x^1 \frac{(1-u)^2}{u} \dd u\right)^{1/2}} = \frac{(1-x)^{\frac{3}{2}}}{\Big((1+x)\big(-2 \log(x) - (1-x)(3-x)\big)\Big)^{1/2}}, \]
one has
$a(t) =g(e^{-\lambda t})$.
Furthermore $g(0)=0$, $g(x) \to_{x\to 1} \frac{\sqrt3}{2}$, and 
\[g'(x) =\frac{2(x+2)(1-x)^\frac12h(x)}{\Big((1+x)\big(-2\log(x)-(1-x)(3-x)\big)\Big)^{3/2}},\]
with $h(x) = \frac{(1-x)(1+5x)}{2 x (x+2)} + \log(x)$.
Finally
$h'(x) = -\frac{(1-x)^3}{x^2(x+2)^2}$, and $h(1)=0$, hence $h\ge 0$ and so does $g'(x)$. Hence $g$ is non-decreasing  and
$a(t) = g(e^{-\lambda t}) \le \frac{\sqrt{3}}{2}$, and the smallest egeinvalue of the matrix is greater than $1-\frac{\sqrt3}{2}$. Hence one have
\begin{multline*}
e^{-\frac{\sigma^2}{2} \int_0^t |\xi(r)|^2 \dd r} \le e^{-\frac{(1-a(t))\sigma^2}{2}  |\Xi(t)|^2 }\\ \le \exp\Bigg(-\frac{ 1-\sqrt{3}}{2}\frac{\sigma^2}{2} \bigg(\int_0^t e^{-\lambda 2r}\dd r |\xi|^2 +\int_0^t \big(\frac{1-e^{-\lambda r}}{\lambda}\big)\dd r |k|^2 \bigg)\Bigg)
\end{multline*}
 and \eqref{eq:schauder} holds true with $c= 1-\frac{\sqrt3}{2}$.
 
For the second inequality, notice that 
\[|\xi(t)| \le |\xi| e^{-\lambda t} + |k| \frac{1-e^{-\lambda t}}{\lambda}.\]
Furthermore, $\sqrt{\int_0^t e^{-2\lambda r} \dd r} \ge \sqrt{t} e^{-\lambda t}$ and by convexity,
\[\sqrt{\int_0^t \Big(\frac{1-e^{-\lambda r}}{\lambda}\Big)^2 \dd r} \ge  \frac{1-e^{-\lambda t}}{\lambda}\sqrt{\int_0^t \frac{r^2}{t^2} \dd r} = \sqrt{t}\frac{1-e^{-\lambda t}}{\sqrt{3}\lambda}.\]
Hence, there exists a constant such that 
\[|\xi(t)| \lesssim \frac{1}{\sigma\sqrt{t}} \Big(\sigma^2|\Xi(t)|^2\Big)^{1/2}.\]
Using the fact that $|\Xi|$ is increasing in time, we have by using Equation \eqref{eq:schauder}
\begin{align*}\big|\xi(t)\big| \bigg(\int_0^t  |\xi(s)|\dd s\bigg)^n e^{-\frac{\sigma^2}{2}\int_0^t| \xi(s)|^2\dd s} \\ \lesssim & \frac{1}{\sigma^{n+1} \sqrt{t}} \sigma^{n+1} |\Xi(t)|^{n+1} \bigg(\int_0^t \frac{1}{\sqrt{s}}\dd s\bigg)^n e^{-\frac{c \sigma^2}{2}|\Xi(t)|^2} \end{align*}
which allows us to conclude easily.
\end{proof}

\section{Uniqueness of solutions}\label{sec:uniqueness}
In order to prove uniqueness, we will use a Gronwall type argument.

\begin{theorem}\label{theorem:uniqueness}
Let $G$, $K_C$, $K$, $C_0$ and $u_0$ which satisfies the Hypothesis \ref{hypo:PS} and \ref{hypo:uniqueness}.
 Then there exists a unique measure solution for equation \eqref{eq:pdf}.
\end{theorem}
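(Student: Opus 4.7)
The plan is to work entirely in Fourier variables, exploiting the mild formulation \eqref{eq:MildFourier} together with the hypoelliptic bounds of Proposition \ref{prop:hypo}, and then close a Grönwall inequality on the difference of two candidate solutions. Let $(u_i,\rho_i,C_i)$ for $i=1,2$ be two measure solutions sharing the same initial data, and set
\[
\delta\hat u = \hat u_1-\hat u_2,\qquad \delta\hat\rho = \hat\rho_1-\hat\rho_2,\qquad \delta C = C_1-C_2.
\]

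First I would collect the a priori ingredients. By Hypothesis \ref{hypo:uniqueness} and Lemma \ref{lem:smoothness_C} applied to each $C_i$, one has $C_i\in L^\infty([0,T];H^{m+2})$ with $m>d/2$, hence $\nabla C_i\in L^\infty([0,T];C^1_b)$ and, by Sobolev embedding in Fourier, $\widehat{\nabla C_i}(t,\cdot)\in L^1(\RR^d)$ uniformly in $t$. By Corollary \ref{corollary:pde1+beta}, both $u_i$ have uniform $(1+\overline\beta)$-moments in velocity, and by Lemma \ref{lemma:FractionalLaplacian} this translates into a uniform bound on the $(1+\overline\beta)$-Hölder seminorm of $\xi\mapsto \hat u_i(t,k,\xi)$, and hence on $\|\hat\rho_i\|_\infty$.

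Next I would subtract the mild formulation \eqref{eq:MildFourier} for the two solutions. Writing $e_s = \exp\bigl(-\tfrac{\sigma^2}{2}\int_0^{t-s}|\xi(r)|^2\,dr\bigr)$, one gets $\delta\hat u(t,k,\xi)= I_{\nabla C}+I_G+I_{\mathrm{coal}}$, where $I_{\nabla C}$ gathers the two splittings
\[
 i\xi(t-s)\cdot\Bigl[(\widehat{\nabla\delta C})*\hat u_1 + (\widehat{\nabla C_2})*\delta\hat u\Bigr]\bigl(s,k,\xi(t-s)\bigr)\, e_s,
\]
while $I_G$ collects $\hat G\bigl(\xi(t-s)\bigr)\bigl[\delta\hat u(s,k,0)+\hat K(k)\delta\hat\rho(s,k)\bigr]e_s$ and $I_{\mathrm{coal}}$ the analogous difference of the two convolutions $(\hat K\hat\rho)*\hat u$. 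Proposition \ref{prop:hypo} with $n=0$ absorbs the $\xi(t-s)$ factor in $I_{\nabla C}$ by an integrable kernel $\lesssim (t-s)^{-1/2}$, and the remaining terms are controlled by $\|\widehat{\nabla C_i}\|_{L^1_k}$, the uniform sup bounds on $\hat u_i,\hat\rho_i$, and the decay of $\hat G,\hat K$. Meanwhile $\delta C$ solves a linear heat equation with source $-(K_C*\delta\rho)C_1$ and zero-order coefficient $K_C*\rho_2$, so Lemma \ref{lem:smoothness_C} controls $\|\delta C(t,\cdot)\|_{C^1_b}$ by $\int_0^t \|\delta\hat\rho(s,\cdot)\|_\infty\,ds$. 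Finally, from $\delta\hat\rho(t,k) = \int_0^t (-\Delta_\xi)^{1/2}\delta\hat u(s,k,0)\,ds$ together with Lemma \ref{lemma:FractionalLaplacian}, one bounds $|\delta\hat\rho(t,k)|$ by a time integral of $\|\delta\hat u(s,\cdot,\cdot)\|_\infty$, interpolated against the a priori uniform Hölder seminorm of $\hat u_1-\hat u_2$ (which is itself controlled by the individual bounds on each $\hat u_i$).

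I would then define
\[
M(t) = \sup_{s\le t}\bigl\|\delta\hat u(s,\cdot,\cdot)\bigr\|_\infty+\sup_{s\le t}\bigl\|\delta\hat\rho(s,\cdot)\bigr\|_\infty+\sup_{s\le t}\|\delta C(s,\cdot)\|_{C^1_b},
\]
combine the three bounds above into an inequality of the form
\[
M(t)\lesssim \int_0^t \Bigl(1+(t-s)^{-1/2}\Bigr)\,M(s)\,ds,
\]
and conclude $M\equiv 0$ on $[0,T]$ by the generalized Grönwall lemma of Appendix \ref{appendix:gronwall}.

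The main obstacle is coupling $\delta\hat\rho$ with the other two quantities. The operator $(-\Delta_\xi)^{1/2}$ at $\xi=0$ requires Hölder-in-$\xi$ regularity of $\delta\hat u$, which is \emph{not} part of $M(t)$; the key trick is that each $\hat u_i$ is independently $(1+\overline\beta)$-Hölder in $\xi$ a priori (Corollary \ref{corollary:pde1+beta}), so the Hölder seminorm of $\delta\hat u$ is bounded once and for all and only its sup norm needs to be sent to zero through the Grönwall loop. Interpolating these two levels of regularity via Lemma \ref{lemma:FractionalLaplacian} is the delicate technical point, and is precisely what motivates the $(1+\overline\beta)$-moment assumption in Hypothesis \ref{hypo:PS}.
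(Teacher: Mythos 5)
The overall architecture of your argument (mild formulation in Fourier space, hypoelliptic estimates from Proposition~\ref{prop:hypo}, Lemma~\ref{lem:smoothness_C} to propagate $C$, generalized Grönwall from Appendix~\ref{appendix:gronwall}) matches the paper's proof. The gap is in the choice of the Grönwall functional $M(t)$, and it is a genuine one. You build $M$ out of the \emph{sup} norms of $\delta\hat u$, $\delta\hat\rho$, $\delta C$ only, and then handle $\delta\hat\rho$ by ``interpolating'' the sup norm of $\delta\hat u$ against its a priori bounded $\cC^{1+\overline\beta}_b$ norm. But $\delta\hat\rho(t,k)=\int_0^t(-\Delta_\xi)^{1/2}\delta\hat u(s,k,0)\,\mathrm{d}s$ and Lemma~\ref{lemma:FractionalLaplacian} require genuine $\cC^{1+\gamma}_b$ control in $\xi$ for some $\gamma>0$; interpolation yields $\|\delta\hat u\|_{\cC^{1+\gamma}_b}\lesssim\|\delta\hat u\|_\infty^{\theta}\|\delta\hat u\|_{\cC^{1+\overline\beta}_b}^{1-\theta}$ with $\theta<1$, so the bound you feed back into the Grönwall loop is \emph{sublinear} in $M$. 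A sublinear Grönwall inequality $M(t)\lesssim\int_0^t(1+(t-s)^{-1/2})M(s)^{\theta}\,\mathrm{d}s$ with $M(0)=0$ does \emph{not} force $M\equiv 0$: the ODE $m'=Cm^\theta$, $m(0)=0$ admits the nontrivial solution $m(t)=((1-\theta)Ct)^{1/(1-\theta)}$, and the Osgood integral $\int_0 m^{-\theta}\,\mathrm{d}m$ converges. So the linear inequality you write down is not actually what the argument produces, and the conclusion $M\equiv 0$ does not follow.

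The paper avoids this by taking the Hölder-in-$\xi$ norm itself as the Grönwall quantity: it runs the iteration on $\sup_k\|\delta\hat u(t,k,\cdot)\|_{\cC^{1+\beta}_b}$. Each of the six terms in the difference of the mild formulations is bounded directly in $\cC^{1+\beta}_b$ using the Leibniz-type estimate $\|f_1 f_2\|_{\cC^{1+\beta}_b}\lesssim\|f_1\|_{\cC^{1+\beta}_b}\|f_2\|_{\cC^{1+\beta}_b}$, and the point of using Proposition~\ref{prop:hypo} for $n=0,1,2$ is precisely to control the first and second $\xi$-derivatives of the hypoelliptic kernel $f_1(\xi)=\xi(t-s)\,e^{-\frac{\sigma^2}{2}\int_0^{t-s}|\xi(r)|^2\mathrm{d}r}$, giving $\|f_1\|_{\cC^{1+\beta}_b}\lesssim(t-s)^{-1/2}$. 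With this choice the bound on $\delta\hat\rho$ (equation \eqref{eq:BoundHatRho}) is \emph{linear} in $\sup_k\|\delta\hat u(s,k,\cdot)\|_{\cC^{1+\beta}_b}$, and the Grönwall inequality closes. To repair your proposal, replace $\sup_s\|\delta\hat u(s,\cdot,\cdot)\|_\infty$ in $M(t)$ by $\sup_{s\le t}\sup_k\|\delta\hat u(s,k,\cdot)\|_{\cC^{1+\beta}_b}$, drop the interpolation step, and check that every term of the mild-formula decomposition is bounded in that stronger norm — which is exactly what the hypoelliptic estimates of Section~\ref{sec:hypoelliptic} were set up to do.
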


The proof is decomposed into three parts. We first obtain a priori bounds for $C$, $\rho$ and $u$ for any measure solution. We then show how to control the difference of the $\rho$ and the $C$ parts of two solutions by the difference of the $u$ parts. Finally we control the difference of the $u$ part and use a Gronwall type argument to have uniqueness. 

\begin{proof}

\emph{A priori bounds for the solutions:}

Note first that when  $(u,\rho,C)$ is a measure solution, since $\rho$ is a  finite measure and $C_0 \in H^{m+2}$ (thanks to Lemma \ref{lem:smoothness_C})
 , $C\in C\big([0,T];H^{m+2}(\RR^d)\big)$ (take $g=0$ and $f=\rho$) with a bound which depends only on $\sup_{t\in[0,T]}\rho(t,\RR^d)$ and of $\|K_C\|_{C^{m+2}_b}$, and since $m>\frac{d}2$,
\[\|\widehat{\nabla C}(t,\cdot)\|_{L^1} \lesssim \|C(t,\cdot)\|_{H^{m+2}} \lesssim_{T,C_0,K_C}  \big(1+\sup_{s\in[0,T]} \rho(s,\RR^d)\big)^{m+2}.\]
Finally, $\rho(s,\RR^d) = \int_0^s \int_{\RR^d\times \RR^d} |v| u(r,\dd x,\dd v) \dd r$ and $u$ is a non negative measure for all time, hence $\rho$ is non-decreasing in time and $\sup_{s\in[0,T]} \rho(s,\RR^d) = \rho(T,\RR^d)$.
Thanks to Young inequality, one have
\[\|\widehat{\nabla C}*\hat{\rho}(s,\cdot)\|_{L^\infty} \lesssim \big(1+\rho(T,\RR^d)\big)^{m+2} \|\hat\rho(s,\cdot)\|_{L^\infty}.\] 
Furthermore, thanks to the hypothesis and Corollary \ref{corollary:pde1+beta}, $u$ has $(1+\beta)$ moments (in $v$, uniformely in time and the $x$ variable) which implies that $\xi \to \hat u(t,k,\xi)$ is a bounded and $(1+\beta)$-Hölder continuous function, uniformely in $t,k$, and we have thanks to Appendix \ref{appendix:Fractional}.
\[\big|\hat \rho(t,k)\big| \le \int_0^t \sup_{k\in \RR^d}\|\hat u(s,k,\cdot)\|_{\cC^{1+\beta}_b}  \dd s. \]
 Note also that 
\[\|\hat u(t,k,\cdot)\|_{\cC^{1+\beta}_b} \lesssim \sup_{t\in[0,T]}\int_{\RR^{d}\times \RR^d}\big(1+|v|\big)^{1+\beta}u(t,\dd v,\dd x),\]
and thanks to Remark \ref{remark:1+beta} and Corollary \ref{corollary:pde1+beta} this is finite, as soon as the main Hypothesis is satisfied.

\emph{Control of the second and third coordinates of the solutions by the first one.}
Now, take two measures solutions $(u_1, \rho_1,C_1)$ and $(u_2,\rho_2,C_2)$ with the same initial conditions $u_0$ and $C_0$. For now, we allow our bound in the $\lesssim$ to depends on $\|C_0\|_{H^{m+2}}$, $\|K_C\|_{H^{\frac{3m}{2}}}$, $\rho_i(T,\RR^d)$, $\sup_{t\in[0,T]} u_i(t,\RR^d,\RR^d)$ and $\sup_{t\in[0,T]}\|C_i(t,\cdot)\|_{H^m}$.

Remark that thanks to the previous discussion, we have
\begin{equation}\label{eq:BoundHatRho}
\big\|(\hat{\rho}_1 - \hat{\rho}_2)(s,\cdot)\big\|_{\infty} \lesssim  \int_0^t \sup_{k}\|(\hat{u}_1 - \hat{u}_{2})(s,k,\cdot)\|_{\cC^{1+\beta}_b} \dd s.
\end{equation} 
Remark that $w=C_1-C_2$ satisfies the following equation :
\[\partial_t w  = \frac{\sigma_C^2}{2}\Delta w - (K_C*\rho_1) w + \big(K_C*(\rho_1 - \rho_2)\big) C_2, \quad w_0 = 0.\]
Thanks to Lemma \ref{lem:smoothness_C}, we have
\begin{multline*}
\big\|C_1(t,\cdot) - C_2(t,\cdot)\big\|_{H^{m+2}} \lesssim \big(1+\|K_C\|_{C^{m+2}_b}\big)^{m+2} \big(1+\rho_1(T,\RR^d)\Big)^{m+2} 
\\ 
\times \int_0^t\Big\| \big(K_C*(\rho_1 - \rho_2)\big)(s,\cdot) C_2(s,\cdot) \Big\|_{H^{m+2}} \dd s.
\end{multline*}
Furthermore 
\begin{align*}
\Big\| \big(K_C*(\rho_1 - \rho_2)\big)(s,\cdot) C_2(s,\cdot) \Big\|^2_{H^{m+2}}
=\int_{\RR^d} \big(1+|k|^2\big)^{m+2}\Big|\big(\hat{K}_C(\hat{\rho}_1 - \hat{\rho_2})\big)*\hat{C}_2(s,k)\Big|^2\dd k,
\end{align*}
and by using the fact that for all $k,k'$, $\big(1+|k|^2\big)^{m+2} \le \big(1+|k'|^2\big)^{m+2} \big(1+|k-k'|^2\big)^{m+2}$, by Young inequality,
\begin{align*}
\Big\| \big(K_C*(\rho_1 - \rho_2)\big)(s,\cdot) C_2(s,\cdot) \Big\|_{H^{m+2}} 
    \lesssim &
\|C_2(s,\cdot)\|_{H^{m+2}}  \int_{\RR^d} |\hat K_C(k)| \big(1+|k|^2\big)^{\frac{{m+2}}{2}} \dd k 
\\ &
\quad \times \big\|\hat{\rho}_1(s,\cdot)-\hat{\rho}_2(s,\cdot)\big\|_{\infty}\\
    \lesssim & \sup_{t\in[0,T]}\|C_2(t,\cdot)\|_{H^{m+2}} \|K_C\|_{H^{2m+2}}  \big\|(\hat{\rho}_1-\hat{\rho}_2)(s,\cdot)\big\|_{\infty}.
\end{align*}

Finally, we have the following bound for $C_1-C_2$,
\begin{equation} \label{eq:boundHmC1C2}
\big\|C_1(t,\cdot) - C_2(t,\cdot)\big\|_{H^m} \lesssim \int_{0}^{t} \int_{0}^s \sup_{k\in \RR^d} \big\|(\hat{u}_1 - \hat{u}_2)(s,k,\cdot)\Big\|_{\cC^{1+\beta}_b} \dd r \dd s.
\end{equation}

\emph{Gronwall type argument :}

Thanks to Equation \eqref{eq:MildFourier}, we have
\begin{align}
\hat u_1(t,k,\xi) &- \hat u_2(t,k,\xi)  = \nonumber \\
&\int_0^t  
        i\xi(t-s)\cdot\big( (\widehat{\nabla C_1}-\widehat{\nabla C_2})*\hat{u}_1\big)\big(s,k,\xi(t-s)\big)e^{-\frac{\sigma^2}{2}\int_0^{t-s} |\xi(r)|^2 \dd r} \dd s \label{eq:BoundHatU1}\\ &
        +\int_0^t  
        i\xi(t-s)\cdot\big(\widehat{\nabla C_2}*(\hat{u}_1-\hat{u}_2)\big)\big(s,k,\xi(t-s)\big)e^{-\frac{\sigma^2}{2}\int_0^{t-s} |\xi(r)|^2 \dd r
    }\dd s \label{eq:BoundHatU2}\\&
        + \int_0^t \hat{G}\big(\xi(t-s)\big)\big(\hat{u}_1(s,k,0) - \hat{u}_2(s,k,0) \big) e^{-\frac{\sigma^2}{2}\int_0^{t-s} |\xi(r)|^2 \dd r 
    } \dd s \label{eq:BoundHatU3}\\&
    + \int_0^t \hat{G}\big(\xi(t-s)\big)\hat{K}(k)\big( \hat{\rho}_1(s,k)-\hat{\rho}_2(s,k)\big) e^{-\frac{\sigma^2}{2}\int_0^{t-s} |\xi(r)|^2 \dd r 
    } \dd s \label{eq:BoundHatU4}\\&
      - \int_0^t \Big(\big( \hat{K} (\hat{\rho}_1-\hat{\rho}_2\big)*\hat{u}_1\Big)\big(s,k,\xi(t-s)\big)
      e^{-\frac{\sigma^2}{2}\int_0^{t-s} |\xi(r)|^2 \dd r 
    }\dd s
    \label{eq:BoundHatU5}\\&
      - \int_0^t \Big(\big( \hat{K} \hat{\rho}_2\big)*(\hat{u}_1-\hat{u}_2)\Big)\big(s,k,\xi(t-s)\big)
      e^{-\frac{\sigma^2}{2}\int_0^{t-s} |\xi(r)|^2 \dd r 
    }\dd s. \label{eq:BoundHatU6}
\end{align} 
Let us recall a basic inequality on Hölder norms : if $f_1$ and $f_2$ are two $(1+\beta)$-Hölder continuous functions from $\RR^d$ to $\RR^d$, then
\[\|f_1\cdot f_2\|_{\cC^{1+\beta}_b} \lesssim \|f_1\|_{\cC^{1+\beta}_b}\|f_2\|_{\cC^{1+\beta}_b}.\]

When dealing with $\eqref{eq:BoundHatU1}$, by taking $f_1(\xi) = \xi(t-s) e^{-\int_0^{t-s} \big|\xi(r)\big|^2\dd r}$, one has (since $D_\xi \xi(r) = e^{-\lambda r} Id$), 
\[Df_1(\xi) = e^{-\lambda (t-s)} e^{-\int_0^{t-s} \big|\xi(r)\big|^2\dd r} Id + \sigma\xi(t-s)\otimes \int_0^{t-s} \xi(r) \dd r e^{-\int_0^{t-s} \big|\xi(r)\big|^2\dd r}\]
and 
\begin{multline*}
D^2f_1(\xi) = \sigma^2 e^{-\lambda (t-s)} e^{-\int_0^{t-s} \big|\xi(r)\big|^2\dd r} \int_0^{t-s} \xi(r)\otimes Id \dd r  
\\
+  \sigma^2\int_0^{t-s} I_d \otimes \xi(r) \dd r e^{-\int_0^{t-s} \big|\xi(r)\big|^2\dd r} + \sigma^2\frac{1-e^{-\lambda(t-s)}}{\lambda} \xi(t-s) \otimes Id\, e^{-\int_0^{t-s} \big|\xi(r)\big|^2\dd r}\\
+
\sigma^4 \xi(t-s)\otimes \bigg(\int_0^{t-s} \xi(r) \dd r\bigg)^{\otimes 2}e^{-\int_0^{t-s} \big|\xi(r)\big|^2\dd r}.
\end{multline*}
Hence, thanks to the second inequality of Proposition \ref{prop:hypo} with $n=0,1,2$, we have
\[\|f\|_{\cC^{1+\beta}_b} \lesssim \frac{1}{\sqrt{t-s}}.\]
Take
\[f_2(\xi) = \big( (\widehat{\nabla C_1}-\widehat{\nabla C_2})*\hat{u}_1\big)\big(s,k,\xi(t-s)\big),\]
by using the previous bound for $C^1-C^2$, we have
\begin{multline*}
\|f_2\|_{\cC^{1+\beta}_b} \lesssim \sup_{k}\|\hat{u}_1(s,k,\cdot)\|_{\cC^{1+\beta}_b}\int_0^s \int_0^r \sup_{k}\|\hat{u}_1(\tau,k,\cdot)-\hat{u}_2(\tau,k,\cdot)\|_{\cC^{1+\beta}_b} \dd \tau \dd r\\
\lesssim \int_0^s \int_0^r \sup_{k}\|\hat{u}_1(\tau,k,\cdot)-\hat{u}_2(\tau,k,\cdot)\|_{\cC^{1+\beta}_b} \dd \tau \dd r
\end{multline*}
Hence, we have the following bound for \eqref{eq:BoundHatU1} :
\begin{equation*}
\sup_{k}\big\|\eqref{eq:BoundHatU1}\big\|_{\cC^{1+\beta}_b} \lesssim \int_{0}^t \frac{1}{\sqrt{t-s}} \int_0^s \int_0^r \sup_{k}\|(\hat{u}_1-\hat{u}_2)(\tau,k,\cdot)\|_{\cC^{1+\beta}_b} \dd \tau\dd r\dd s.
\end{equation*}
The proof for \eqref{eq:BoundHatU2} when taking $f_1$ to be the same and $f_2 = \big(\widehat{\nabla C_2}*(\hat{u}_1-\hat{u}_2)\big)\big(s,k,\xi(t-s)\big)$, since $\|f_2\|_{\cC^{1+\beta}_b} \lesssim \sup_{k}\big\| \hat{u}_1(s,k,\cdot) - \hat{u}_2(s,k,\cdot) \big\|_{\cC^{1+\beta}_b}$, and 
\begin{equation*}
\sup_{k}\big\|\eqref{eq:BoundHatU2}\big\|_{\cC^{1+\beta}_b} \lesssim \int_{0}^t \frac{1}{\sqrt{t-s}}  \sup_{k}\big\|(\hat{u}_1-\hat{u}_2)(s,k,\cdot)\big\|_{\cC^{1+\beta}_b} \dd s.
\end{equation*}

In \eqref{eq:BoundHatU3} and \eqref{eq:BoundHatU4}, since $G$ has finite $(1+\beta)$ moments, $\hat G$ is $(1+\beta)$-Hölder continuous. Take $f_1(\xi)= \widehat{G}\big(\xi(t-s)\big) e^{-\frac{\sigma^2}{2}\int_0^{t-s}\big|\xi(r)\big|^2 \dd r}$, we have $\|f_1\|_{\cC^{1+\beta}_b}\lesssim 1$. Furthermore take $f_2 = \hat{u}_1(s,k,0) - \hat{u}_2(s,k,0) + \hat{K}(\xi)\big(\hat{\rho}_1(s,k) - \hat{\rho}_2(s,k)\big)$. Thanks to \eqref{eq:BoundHatRho} we have $\|f_2\|_{\cC^{1+\beta}_b} \lesssim \sup_{k}\|\hat{u}_1(s,k,\cdot)\|_{\cC^{1+\beta}_b} + \int_0^s \sup_{k}\|\hat{u}_1(s,k,\cdot)\|_{\cC^{1+\beta}_b} \dd r$, and we have
\begin{multline*}
\sup_{k\in \RR^d} \|\eqref{eq:BoundHatU3} + \eqref{eq:BoundHatU4}\|_{\cC^{1+\beta}_b} \lesssim \int_0^t \sup_{k}\|\hat{u}_1(s,k,\cdot) - \hat{u}_2(s,k,\cdot) \|_{\cC^{1+\beta}_b} \dd s  \\ + \int_0^t \int_0^s \sup_{k}\|\hat{u}_1(s,k,\cdot) - \hat{u}_2(s,k,\cdot)\|_{\cC^{1+\beta}_b} \dd r \dd s.
\end{multline*}
In \eqref{eq:BoundHatU5}, take $f_1 = e^{-\frac{\sigma^2}{2} \int_0^{t-s} |\xi(r)|^2\dd r}$ and $f_2 = \Big(\big( \hat{K} (\hat{\rho}_1-\hat{\rho}_2\big)*\hat{u}_1\Big)\big(s,k,\xi(t-s)\big)$.Remind that the convolution is in the $k$ variable, hence thanks to Young inequality,
\begin{align*}
\|f_2\|_{\cC^{1+\beta}_b}    \lesssim & \| \hat{K}_m (\hat{\rho}_1-\hat{\rho}_2)(s,\cdot)\big\|_{L^1} \sup_{k\in\RR^d} \big\|\hat{u}_1(s,k,\cdot)\big\|_{\cC^{1+\beta}_b}\\
\lesssim & \|K\|_{H^{m}} \int_{0}^s \sup_{k} \|\hat{u}_1(r,k,\cdot) - \hat{u}_2(r,k,\cdot)\|_{\cC^{1+\beta}_b}\dd r.
\end{align*}
The same holds for \eqref{eq:BoundHatU6} with $f_2 = \Big(\big( \hat{K} \hat{\rho}_2\big)*(\hat{u}_1-\hat{u}_2)\Big)$, and we have
\[\|f_2\|_{\cC^{1+\beta}_b} \lesssim \sup_{k} \|\hat{u}_1(r,k,\cdot) - \hat{u}_2(r,k,\cdot)\|_{\cC^{1+\beta}_b}.\]
Hence, one have
\begin{multline*}
\sup_{k\in \RR^d} \|\eqref{eq:BoundHatU5} + \eqref{eq:BoundHatU6}\|_{\cC^{1+\beta}_b} \lesssim \int_0^t \sup_{k}\|\hat{u}_1(s,k,\cdot) - \hat{u}_2(s,k,\cdot) \|_{\cC^{1+\beta}_b} \dd s  \\ + \int_0^t \int_0^s \sup_{k}\|\hat{u}_1(r,k,\cdot) - \hat{u}_2(r,k,\cdot)\|_{\cC^{1+\beta}_b} \dd r \dd s,
\end{multline*}
and finally
\begin{multline*}
\sup_{k\in \RR^d}\big\|\hat{u}_1(t,k,\cdot)-\hat{u}_2(t,k,\cdot)\big\|_{\cC^{1+\beta}_b} \lesssim \\
\int_0^t\Big(1+\frac{1}{\sqrt{t-s}}\Big) \sup_{k}\|\hat{u}_1(s,k,\cdot) - \hat{u}_2(s,k,\cdot) \|_{\cC^{1+\beta}_b} \dd s 
 \\ 
 + \int_0^t \int_0^s \sup_{k}\|\hat{u}_1(r,k,\cdot) - \hat{u}_2(r,k,\cdot)\|_{\cC^{1+\beta}_b} \dd r \dd s\\
+ \int_0^t \frac{1}{\sqrt{t-s}} \int_0^s \int_0^r \sup_{k}\|\hat{u}_1(\tau,k,\cdot) - \hat{u}_2(\tau,k,\cdot)\|_{\cC^{1+\beta}_b} \dd \tau \dd r \dd s.
\end{multline*}
We can conclude by using the Gronwall type lemma of Appendix \ref{appendix:gronwall}, with $A_0=0$, and we have $\hat{u}_1 = \hat{u}_2$ for all $t,k,\xi$. using the bound for $C_1-C_2$ and $\hat{\rho}_1
-\hat{\rho}_2$, we can conclude that $(u_1,\rho_1,C_1)= (u_2,\rho_2,C_2)$, which ends the proof.\end{proof}

\begin{proof}[Proof of Theorem  \ref{theorem:main}]
From the tightness of $\{Q^{N}\}_{N\in\NN}$ and Theorem \ref{theorem:limitsupport} we obtain the convergence of subsequences. By Theorem \ref{theorem:uniqueness} we obtain the convergence of the full sequence and thus the desired result.
\end{proof}

\section{Smoothness of the solution}\label{sec:smoothness}

We end by recalling Hypothesis \ref{hypo:smoothness} and proving 	a theorem for the smoothness of the solutions. 
\begin{itemize}
\item For all $N\geq 0$, 
$\sup_{\xi\in\RR^d} |\hat{G}(\xi)\big|\big(1+|\xi|^2)^N < \infty$.
\item For all $N\geq 0$, 
$\sup_{\xi\in\RR^d} |\hat{K}(k)\big|\big(1+|\xi|^2)^N < \infty$.
\item
For all $m \geq 0$, $C_0,K_C \in H^m$ and $K_C \in C^m_b$.
\end{itemize}

\begin{proof}[Proof of Theorem \ref{theorem:regularity}]
Following Desvillette and Villani \cite{desvillettes2001}, one only has to prove that if there exists $n\ge 0$ such that 
\begin{equation}\label{eq:induction}
\sup_{t\in[0,T]} \sup_{k,\xi \in \RR^d} \big(1+|k|^2+|\xi|^2\big)^{\frac{n}{6}}\big|\hat{u}(t,k,\xi)\big| < +\infty,
\end{equation}
then for all $t_0>0$,
\[\sup_{t\in[t_0,T]} \sup_{k,\xi \in \RR^d} \big(1+|k|^2+|\xi|^2\big)^{\frac{n+1}{6}}\big|\hat{u}(t,k,\xi)\big| < +\infty,\]
and then conclude by induction.
Note first that 
\[|k|^2 + |\xi(r)|^2 = 
\left( 
\begin{pmatrix}
e^{-2\lambda r} & e^{-\lambda r}\frac{1-e^{-\lambda r}}{ \lambda} \\  e^{-\lambda r}\frac{1-e^{-\lambda r}}{ \lambda} & 1+ \Big(\frac{1-e^{-\lambda r}}{ \lambda}\Big)^2
\end{pmatrix}
\begin{pmatrix}
\xi \\ k
\end{pmatrix} 
\right) 
\cdot \begin{pmatrix}
\xi \\ k
\end{pmatrix}.\]
Furthermore, the determinant of the previous non negative matrix is equal to $e^{-2\lambda r} \ge e^{-2\lambda T}$, and (thanks to Cauchy-Schwarz inequality) its largest eigenvalue is at most $1+T^2$.  Hence its smallest eigenvalue is greater than $\frac{e^{-2\lambda T}}{1+T^2}$ and there exists $c>0$ such that for all $r\in[0,T]$,
\[\frac1c \big(|k|^2 + |\xi|^2\big) \le |k|^2 + |\xi(r)|^2 \le c \big(|\xi|^2+ |k|^2).\]
Since \eqref{eq:induction} is true for $t=0$, one have
\[\Big|\hat{u}_0\big(t,k,\xi(t)\big)\Big| \lesssim \Big(1+|k|^2+\big|\xi(r)\big|^2\Big)^{-\frac{n}{6}} \lesssim \big(1+|k|^2 + |\xi|^2\big)^{-\frac{n}{6}}.\]
Furthermore, thanks to the hypoelliptic estimates Proposition \ref{prop:hypo}, we have
\begin{equation}\label{eq:boundhypo}\exp\left(-\frac{\sigma^2}{2}\int_{0}^t |\xi(r)|^2 \dd r\right) \lesssim \frac{1}{t^{\frac23}}\Big(1+|k|^2+\big|\xi\big|^2\Big)^{-\frac{1}{6}}
\end{equation}
and this close the bound for the first term of \eqref{eq:MildFourier}. This also allows us to bound the second last two terms of \eqref{eq:MildFourier}. Indeed, since $G$ and $K$ are decaying faster than any polynomials, one has
 \[\Big|G\big(\xi(t-s)\big)\Big| \big|\hat{u}(s,k,0)\big| \lesssim  \big(1+|\xi(t-s)|^2\big)^{-\frac{n}{6}}\big(1+|k|^2\big)^{-\frac{n}{6}} \lesssim \big(1+|k|^2+ |\xi(t-s)|^2\big)^{-\frac{n}{6}}\]
 and
\begin{multline*}
\Big|G\big(\xi(t-s)\big)\Big| \big|\hat{K}(k) \hat{\rho}(t,k)\big| \lesssim_{\rho(T,\RR^d)}  \big(1+|\xi(t-s)|^2\big)^{-\frac{n}{6}}\big(1+|k|^2\big)^{-\frac{n}{6}} 
\\
\lesssim \big(1+|k|^2+ |\xi(t-s)|^2\big)^{-\frac{n}{6}} \lesssim \big(1+|k|^2+ |\xi|^2\big)^{-\frac{n}{6}}. \end{multline*}
Thanks to the bound
\[\big(1+|k|^2 + |\xi|^2\big) \lesssim \big(1+|k-k'|^2 + |\xi|^2\big) \big(1+|k'|^2 + |\xi(t-s)|^2\big),\]
we also have the following bound,
\begin{align*}
\Big|\big(\hat{K} \hat{\rho} \big)* \hat{u}(s,\xi(t-s)\big) \Big|\big(1+|k|^2 + |\xi|^2\big)^{\frac{n}{6}} \lesssim & \int \big|\hat K(k)\big| \big|\hat{\rho}(s,k)\big| \big(1+|k|^2  \big)^{\frac{n}6} \dd k \\
\lesssim & 1.
\end{align*}
Hence using \eqref{eq:boundhypo}, one can bound the last to lines of \eqref{eq:MildFourier} by a constant times
\[ \big(1+|\xi|^2 + |k|^2\big)^{-\frac{n+1}{6}} \int_0^t (t-s)^{-\frac23} \dd s.\]
Finally $C_0$ and $K_C$ are regular enough, $C\in H^m$ for all $m\in \NN$. One has
\begin{align*}
\Big|\widehat{\nabla C}*\hat{u}(s,k,\xi)\big| \big(1+|k|^2 + |\xi|^2\big)^{\frac{n}6} \lesssim & \int_{\RR^d} \big|\widehat{\nabla C}(t,k)\big| \big(1+|k|^2\big) \dd k \\
\lesssim & \|C(t,\cdot)\|_{H^{2+m+\frac{n}3}}
\end{align*}
for some $m>\frac{d}2$, and one can conlcude by using Lemma \ref{lem:smoothness_C}. Finally, since $\xi(r) \le |\xi| + r|k|$ One can bound the last line of \eqref{eq:MildFourier} by
\begin{multline*}
    \big(1+|k|^2 + |\xi|^2\big)^{-\frac{n}{6}}\int_0^t \big(|\xi| + r|k| \big) e^{-\frac{\sigma^2}{2}\int_0^s |\xi(r)| \dd r}\dd s \\ \lesssim \big(1+|k|^2 + |\xi|^2\big)^{-\frac{n}{6}} \int_0^t \big(|\xi| + r|k| \big) e^{-c s^3 |k|^2 +s |\xi|^2 }\dd s.
\end{multline*}
Following Desvillette and Villani \cite{desvillettes2001} Lemma 5.3, one can conclude that the latter is no greater than a constant times 
\[\big(1+|k|^2 + |\xi|^2\big)^{-\frac{n}{6}},\]
which ends the proof.
\end{proof}

\appendix

\section{Reminder on curvilinear abscissa}\label{appendix:curvinlinear}

Let us remind that if $X$ is a $C^1$ curve in $\RR^d$, parametrized by $t\in[0,T]$
\[s(t) = \int_0^t \big|X'(r)\big| \dd r,\]
is the curvilinear abscissa of $X$.
Hence, let us define $\tilde X$ such that
\[\tilde X \big(s(t)\big) = X(t),\]
where $\tilde X$ is parametrized by $t\in \big[0,s(T)\big]$. Hence, we have
\[s'(t)\tilde X'\big(s(t)\big) = X'(t),\]
and finally for all $t\in[0,s(T)]$,
\[|\tilde X'(t)| = 1,\]
and  the velocity of $\tilde X$ is of norm $1$. Finally, it is possible to parameterize the following spatial set (independently of the speed of the curve) :
\[\XX_t = \{X(s), s\in[0,t]\} = \{\tilde X\big(s(r)\big), r\in[0,t]\} = \{\tilde X(r) , r\in[0,s(t)]\}.\]
Hence, 
\begin{align*}
\big \langle f , \delta_{\XX_t} \big \rangle =& \int_{0}^{s(t)} f\big( \tilde X(r) \big) \dd r\\
=&\int_{0}^{s(t)} f\Big(X\big(s^{-1}(r)\big)\Big)\dd r \\
=&
\int_{0}^{t} f\big(X(r)\big) \big|X'(r)\big|\dd r \\
=&
\left \langle f , \int_{0}^{t} \big|X'(r)\big| \delta_{X(r)} \dd r \right \rangle.
\end{align*}

Which gives 
\begin{equation}\label{eq:curvilinear}
\delta_{\XX_t}(\dd x) = \int_{0}^t \big|X'(r) \big| \delta_{X(r)}(\dd x) \dd r
\end{equation}

\section{A generalized Gronwall Lemma}\label{appendix:gronwall}
\begin{lemma}
Let $n\ge 1$. Let $A_0,A_1,a_1,\cdots,A_n,a_n\in \RR_+$, $q_1,\cdots,q_n >1$ and let $f$ be a positive measurable function such that for all $t\in[0,T]$,
\begin{multline*}
f(t) \le    A_0 + A_1\int_0^t \big(1+a_1(t-s_1)^{-\frac{1}{q_1}}\big)f(s_1) \dd s_1 + \cdots 
\\+ A_n 
    \int_0^t 
        \Big(1+a_n(t-s_1)^{-\frac{1}{q_n}}\Big) 
        \int_0^{s_1} \cdots \int_0^{s_{n-1}}f(s_n) \dd s_n \cdots \dd s_1.
        \end{multline*}
There exists a constant $C>0$ which may depend on all the parameters such that  for all $t\in[0,T]$,
\[f(t)  \lesssim A_0 e^{Ct}.\]
\end{lemma}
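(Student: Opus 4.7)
The plan is to use a weighted supremum (``absorption'') argument. Fix a parameter $\lambda > 0$ to be chosen large at the end, and assume $f$ is locally bounded on $[0,T]$ (this holds in all the applications of the lemma in the paper, e.g.\ when $f$ is $\sup_k \|\hat u(s,k,\cdot)\|_{\cC^{1+\beta}_b}$). Define
\[
g(t) = \sup_{s \le t} e^{-\lambda s} f(s),
\]
which is non-decreasing and, under the boundedness assumption, finite for all $t \in [0,T]$.

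The first step is to estimate each nested integral in the hypothesis. Using $f(s_k) \le g(t) e^{\lambda s_k}$ and repeatedly applying $\int_0^{s_{j-1}} e^{\lambda s_j} \dd s_j \le e^{\lambda s_{j-1}}/\lambda$, one obtains inductively
\[
\int_0^{s_1} \int_0^{s_2} \cdots \int_0^{s_{k-1}} f(s_k) \dd s_k \cdots \dd s_2 \le \frac{g(t)\, e^{\lambda s_1}}{\lambda^{k-1}}.
\]
For the remaining outer integral, substituting $u = t - s_1$ and using $q_k > 1$ so that $\int_0^\infty u^{-1/q_k} e^{-\lambda u} \dd u = \lambda^{1/q_k - 1}\Gamma(1 - 1/q_k) < \infty$, I would show
\[
\int_0^t \bigl(1 + a_k(t-s_1)^{-1/q_k}\bigr) e^{\lambda s_1} \dd s_1 \le C_k\, \lambda^{1/q_k - 1}\, e^{\lambda t}
\]
for $\lambda \ge 1$, with $C_k$ depending only on $a_k, q_k$. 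Combining these two bounds, the $k$-th summand in the hypothesis is at most $A_k C_k\, g(t)\, e^{\lambda t}\, \lambda^{1/q_k - k}$.

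The key observation is that, since $q_k > 1$ and $k \ge 1$, each exponent $1/q_k - k < 0$, so each summand tends to zero as $\lambda \to \infty$. Choosing $\lambda$ large enough that $\sum_{k=1}^n A_k C_k\, \lambda^{1/q_k - k} \le 1/2$, the hypothesis becomes
\[
f(t) \le A_0 + \tfrac{1}{2}\, g(t)\, e^{\lambda t}.
\]
Multiplying by $e^{-\lambda t}$ and taking the supremum over $s \le t$ on both sides gives $g(t) \le A_0 + \tfrac{1}{2} g(t)$, so that $g(t) \le 2 A_0$ and hence $f(t) \le 2 A_0 e^{\lambda t}$, yielding the conclusion with $C = \lambda$.

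The only real subtlety, and the main obstacle to a fully general statement, is ensuring that $g(t)$ is finite before one is allowed to absorb the $\tfrac12 g(t)$ term on the left-hand side. This is clear if $f$ is assumed bounded on $[0,T]$; otherwise one can first apply the argument on a sufficiently short interval $[0,\tau]$ (where boundedness follows from the integrability of the singular kernels and a standard fixed-point/truncation argument), and then iterate to cover $[0,T]$. The computation with the Gamma integral, which explicitly uses $q_k > 1$, is the point where the hypothesis is essential.
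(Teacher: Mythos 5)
Your proposal is correct, and it takes a genuinely different route from the paper. The paper's argument first tames the singular kernel $\big(1+a_k(t-s_1)^{-1/q_k}\big)$ by applying H\"older's inequality (in the time variable, with a conjugate exponent $q<\min_i q_i$) together with Jensen's inequality, thereby replacing the convolutions against singular kernels by the $L^p$-norms of nested primitives $g_0 = f^p$, $g_k = \int_0^\cdot g_{k-1}$; summing these and applying classical Gr\"onwall then yields the exponential bound. Your approach instead works directly with the exponentially weighted supremum $g(t) = \sup_{s\le t} e^{-\lambda s} f(s)$: after the substitution $u=t-s_1$ the singular factor is controlled by the Gamma integral $\int_0^\infty u^{-1/q_k}e^{-\lambda u}\dd u = \Gamma(1-1/q_k)\,\lambda^{1/q_k-1}$, the nested integrals contribute an additional $\lambda^{-(k-1)}$, and since $1/q_k - k < 0$ the whole integral term can be absorbed by choosing $\lambda$ large. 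Both arguments invoke $q_k>1$ at the decisive moment (H\"older conjugacy in the paper, convergence of the Gamma integral in yours). Your route is arguably cleaner and gives the constant $C=\lambda$ explicitly in terms of $a_k, q_k, A_k, n$. The one technical point you correctly flag — that the absorption step requires $g(t)<\infty$, i.e.\ local boundedness of $f$ — is an implicit standing assumption in the paper's proof as well (a classical Gr\"onwall argument on $g_0+\cdots+g_{n-1}$ also presupposes these are finite), and it does hold in all applications of the lemma in the paper; the short-interval bootstrap you sketch is the standard fix if one insists on starting only from "positive measurable".
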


\begin{proof}
Let $q < \min_{1\le i\le n}{q_i}$ and let $p>1$ such that $\frac{1}{p}+\frac{1}{q} = 1$. By using Hölder inequality, and Jensen Inequality, we have, for all $1\le k\le n$
\begin{multline*}
\int_0^t 
        \Big(1+a_k(t-s_1)^{-\frac{1}{q_n}}\Big) 
        \int_0^{s_1} \cdots \int_0^{s_{k-1}}f(s_n) \dd s_k \cdots \dd s_1
        \\
         \lesssim  \bigg( \int_{0}^t \int_0^{s_1} \cdots \int_0^{s_{k-1}} f(s_k)^p \dd s_k \cdots \dd s_2\dd s_1 \bigg)^{\frac1p} 
        \end{multline*}

There exists some constants $B>0$ depending on $T$, $A_1,\cdots,A_n$ and $a_1,\cdots,a_n$, $q,q_1,\cdots,q_n$ and $n$ such that 
\[
f(t) \le    A_0 + B\bigg(\int_0^t g_0(s) + g_1(s)^p + \cdots g_{n-1}(s)^p \dd s\bigg)^{\frac1p},
        \]
where $g_0(t) = f(t)^p$ et $g'_k(t) = g_{k-1}(t)$. Finally, since $g_1(t) + \cdots + g_{n-1}(t) = \int_0^t g_1(s) + \cdots + g_{n-2}(s)\dd s$, 
there exists a constant $c$ and a constant $C>0$ such that 
\[g_0(t) + \cdots +g_{n-1}(t) \le c A_0^p  + p C \int_{0}^t g_0(s) + \cdots +g_{n-1}(s) \dd s.\]
We conclude by the classical Hölder inequality, and we have
\[f(t) \lesssim A_0e^{ C t}.\]
\end{proof}

\section{Toolbox on fractional Laplacian}\label{appendix:Fractional}

Let us recall that we define the non local operator $(-\Delta)^{\frac12}$ for sufficient regular functions $f$ by the formula
\[(-\Delta)^{\frac12}f(x) = \frac{\Gamma\Big(\frac{d+1}{2}\big)}{\pi^{\frac{d+1}{2}}} V.P. \int_{\RR^d} \frac{f(x) - f(y)}{|x-y|^{d+1}} \dd y,\] 
where $V.P.$ denotes the principal value. We then have the following lemma :
\begin{lemma}\label{lemma:FractionalLaplacian}
Let $\beta>0$. The operator $(-\Delta)^{\frac12}$ is well-defined on the space of Hölder continuous functions $\cC^{1+\beta}_b(\RR^d;\RR)$ to the space of bounded functions, and for $f,g \in \cC^{1+\beta}_b(\RR^d;\RR)$ we have
\[\|(-\Delta)^{\frac12}f - (-\Delta)^{\frac12}g\|_{\infty, \RR^d} \lesssim \|f-g\|_{\cC^{1+\beta}_b}.\]
\end{lemma}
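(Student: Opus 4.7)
The plan is to exploit the fact that the kernel $|x-y|^{-(d+1)}$ has a borderline non-integrable singularity at $y=x$, compensated by the cancellation $f(x)-f(y)$ and the principal value prescription. The regularity $f\in \cC^{1+\beta}_b$ is used to control the remainder of the first-order Taylor expansion of $f$ at $x$.

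First I would split the domain at radius one: write
\[(-\Delta)^{1/2} f(x) = c_d \bigg(\lim_{\eps\to 0}\int_{\eps<|y-x|<1} \frac{f(x)-f(y)}{|x-y|^{d+1}}\dd y + \int_{|y-x|\ge 1}\frac{f(x)-f(y)}{|x-y|^{d+1}}\dd y\bigg).\]
The far-field integral is bounded absolutely by $2\|f\|_\infty \int_{|z|\ge 1} |z|^{-(d+1)}\dd z \lesssim \|f\|_\infty$, so it poses no problem. For the near-field integral I would use Taylor's theorem to write $f(y) = f(x) + \nabla f(x)\cdot (y-x) + R(x,y)$ with $|R(x,y)|\lesssim \|\nabla f\|_{\cC^\beta}\, |y-x|^{1+\beta}$. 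Then on any annulus $\{\eps<|y-x|<1\}$, which is symmetric about $x$, the linear term $\nabla f(x)\cdot(y-x)/|x-y|^{d+1}$ integrates to zero by odd symmetry; this is precisely the role of the principal value.

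What remains is
\[V.P.\int_{|y-x|<1}\frac{f(x)-f(y)}{|x-y|^{d+1}}\dd y = -\int_{|y-x|<1}\frac{R(x,y)}{|x-y|^{d+1}}\dd y,\]
and this last integral is absolutely convergent since
\[\int_{|y-x|<1}\frac{|R(x,y)|}{|x-y|^{d+1}}\dd y \lesssim \|f\|_{\cC^{1+\beta}_b}\int_{|y-x|<1}|x-y|^{\beta-d}\dd y \lesssim \|f\|_{\cC^{1+\beta}_b},\]
the last being finite because $\beta>0$. Combining, $|(-\Delta)^{1/2}f(x)|\lesssim \|f\|_{\cC^{1+\beta}_b}$ uniformly in $x$, which establishes boundedness. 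The Lipschitz statement follows immediately: $(-\Delta)^{1/2}$ is linear, so applying the previous bound to $f-g$ yields $\|(-\Delta)^{1/2}f - (-\Delta)^{1/2}g\|_\infty \lesssim \|f-g\|_{\cC^{1+\beta}_b}$.

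The only subtle point — and the one I would write out carefully — is that the limit defining the principal value actually exists: one has to check that the integrand on $\{\eps<|y-x|<1\}$ converges in $L^1$ of that domain as $\eps\to 0$ after subtracting off the odd linear part, which is immediate from the $|y-x|^{\beta-d}$ bound above. No other steps should present any real difficulty.
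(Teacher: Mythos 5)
Your proof is correct and follows essentially the same strategy as the paper's: split the integral at radius $1$, bound the far-field part by $\|f\|_\infty$, use the odd-symmetry cancellation of the linear term $\nabla f(x)\cdot(y-x)$ on the annulus, and bound the remaining integrand by $\|Df\|_{\cC^\beta}|x-y|^{\beta-d}$ via the Hölder continuity of $Df$. The only cosmetic difference is that you phrase the cancellation through a Taylor remainder $R(x,y)$ while the paper writes $f(x)-f(y)$ via the fundamental theorem of calculus and subtracts $Df(x)$ inside the integral, but these are the same estimate.
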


\begin{proof}
First, let us remark that 
\[\bigg|\int_{\RR^d\setminus B(x,1)} \frac{f(x)-f(y)}{|x-y|^{d+1}} \dd y \bigg| \lesssim \|f\|_{\infty}\]
since $f$ is a bounded function. Furthermore, 
for all $1>\eps>0$, we have
\[\int_{B(x,1)\setminus B(x,\eps)} Df(x)\frac{y-x}{|x-y|^{d+1}} \dd y= 0.\]
Finally, remark that
\begin{multline*}
\int_{B(x,1)\setminus B(x,\eps)} \frac{f(x) - f(y)}{|x-y|^{d+1}} \dd y = \int_{B(x,1)\setminus B(x,\eps)} \int_0^1 Df(l(y-x)+x)\dd l\frac{x-y}{|x-y|^{d+1}} \dd y\\
= \int_{B(x,1)\setminus B(x,\eps)} \int_0^1 \big(Df(l(y-x)+x)-Df(x)\big)\dd l\frac{x-y}{|x-y|^{d+1}} \dd y.
\end{multline*}
Hence, one can use the fact the $Df$ is $\beta$-Hölder continuous, and one have
\[\bigg|\int_{B(x,1)\setminus B(x,\eps)} \frac{f(x) - f(y)}{|x-y|^{d+1}} \dd y\bigg| \lesssim \|f\|_{\cC^{1+\beta}_b}.\]
By using the dominated convergence theorem, $(-\Delta)^{\frac12}f$ is well-defined, and we have the wanted bound, since the operator is linear. 
\end{proof}

\section{Duhamel formulation of kinetic transport equation}\label{appendix:Mild}  Let $f\in L^\infty\big([0,T]\times \RR^d\times \RR^d\big)$,  us look at the equations for the characteristics lines starting from $\tilde{\xi},\tilde{k}\in \RR^d$ of the first order  equation
\[\partial_t h(t,k,\xi) - k\cdot\nabla_\xi h(t,k,\xi) + \lambda \xi\cdot\nabla_\xi h(t,k,\xi) = -\frac{|\xi|^2 \sigma^2}{2}h(t,k,\xi)+ f(t,k,\xi).\]
We have
\[\begin{cases}
\tilde \xi'(t) &= - \tilde k'(t) + \lambda \tilde \xi'(t) \\ \tilde k'(t) &= 0
\end{cases}\]
Hence, $\tilde \xi(t) =  \Big(\tilde{\xi}-\frac{\tilde k}{\lambda}\Big) e^{\lambda t} + \frac{\tilde k}{\lambda} $ and $\tilde k(t) = \tilde k$
Finally, one have
\[\partial_t h\big(t,\tilde k, \tilde \xi(t)\big)=-\frac{|\tilde \xi(t)|^2 \sigma^2}{2}h\big(t,\tilde k,\tilde \xi(t)\big)+ f\big(t,\tilde k,\tilde \xi(t)\big) \]
and by solving this ordinary differential equation, one have
\[h\big(t,\tilde k,\tilde \xi(t)\big) = h_0(\tilde k,\tilde \xi)e^{-\frac{\sigma^2}{2}\int_0^t |\tilde \xi(r)|^2 \dd r} + \int_0^t f\big(s,\tilde k,\tilde \xi(s)\big)e^{-\frac{\sigma^2}{2}\int_s^t |\tilde \xi(r)|^2 \dd r}\dd s.\]
Now, let us fix $t\in[0,T]$ and let us  take $\tilde k = k$ and $\tilde \xi = \Big(\xi-\frac{ k}{\lambda}\Big) e^{-\lambda t} + \frac{ k}{\lambda}$, such that $\tilde \xi(t) = \xi$ and we have
\begin{multline*}
h(t,k,\xi) = h_0\bigg(k,\Big(\xi-\frac{ k}{\lambda}\Big) e^{-\lambda t} + \frac{ k}{\lambda}\bigg)\exp\bigg(-\frac{\sigma^2}{2} \int_0^t \Big|\Big(\xi-\frac{ k}{\lambda}\Big) e^{-\lambda r} + \frac{ k}{\lambda}\Big|^2 \dd r\bigg)\\
+\int_0^t f\bigg(s,k,\Big(\xi-\frac{ k}{\lambda}\Big) e^{-\lambda (t-s)} + \frac{ k}{\lambda}\bigg) \exp\bigg(-\frac{\sigma^2}{2} \int_0^{t-s} \Big|\Big(\xi-\frac{ k}{\lambda}\Big) e^{-\lambda r} + \frac{ k}{\lambda}\Big|^2 \dd r\bigg) \dd s\\
= h_0\big(k,\xi(t)\big) e^{-\frac{\sigma^2}{2}\int_0^t |\xi(r)|^2 \dd r} + \int_0^t f\big(s,k,\xi(t-s)\big) e^{-\frac{\sigma^2}{2} \int_0^{t-s} |\xi(r)|^2 \dd r}
\end{multline*}
with  $\xi(r) = \left(\xi - \frac{k}{\lambda}\right)e^{-\lambda r} + \frac{k}{\lambda}$.


\end{document}